\providecommand{\U}[1]{\protect\rule{.1in}{.1in}}
\newtheorem{theorem}{Theorem}
\theoremstyle{plain}
\newtheorem{corollary}{Corollary}
\newtheorem{lemma}{Lemma}
\newtheorem{proposition}{Proposition}
\newtheorem{remark}{Remark}
\numberwithin{equation}{section}
\begin{document}
\title[Lancaster expansions]{On a class of bivariate distributions built of $q$-ultraspherical polynomials.}
\author{Pawe\l \ J. Szab\l owski}
\address{Department of Mathematics and Information Sciences\\
Warsaw University of Technology\\
ul Koszykowa 75, 00-662 Warsaw, Poland}
\email{pawel.szablowski@gmail.com}
\date{September, 2022}
\subjclass[2020]{Primary 62H05, 33D45; Secondary 47B34, 46N30}
\keywords{bivariate distribution, Lancaster expansions, orthogonal polynomials,
$q$-series, $q$-ultraspherical, Hermite, $q$-Hermite polynomials}

\begin{abstract}
Our primary result concerns the positivity of specific kernels constructed
using the $q$-ultraspherical polynomials. In other words, it concerns a
two-parameter family of bivariate, compactly supported distributions.
Moreover, this family has a property that all its conditional moments are
polynomials in the conditioning random variable. The significance of this
result is evident for individuals working on distribution theory, orthogonal
polynomials, $q$-series theory, and the so-called quantum polynomials.
Therefore, it may have a limited number of interested researchers. That is
why, we put our results into a broader context. We recall the theory of
Hilbert-Schmidt operators and the idea of Lancaster expansions of the
bivariate distributions absolutely continuous with respect to the product of
their marginal distributions. Applications of Lancaster expansion can be found
in Mathematical Statistics or the creation of Markov processes with polynomial
conditional moments (the most well-known of these processes is the famous
Wiener process).

\end{abstract}
\maketitle

\section{Introduction}

As stated in the Abstract our main result concerns the positivity of certain
bivariate kernels built of the so-called $q$-ultraspherical polynomials. The
proof is long and difficult involving certain identities from the so-called
$q$-series theory. That is why, we will present our results in a broader
context that includes, Hilbert-Schmidt operators, kernels built of orthogonal
polynomials, Markov processes with polynomial conditional moments and
bivariate distributions that are absolutely continuous with respect to the
products of their marginals and their applications in Mathematical statistics.

We will be dealing mostly with expressions of type
\begin{equation}
K(x,y)=\sum_{j=0}^{\infty}c_{j}a_{j}(x)b_{j}(y)\text{,} \label{kerr}%
\end{equation}
where $\left\{  a_{j}(x)\right\}  $ and $\left\{  b_{j}(y)\right\}  $ are the
sets of real polynomials orthogonal with respect to some finite real measures
respectively $d\alpha(x)$ and $d\beta(y)$. Since the terminology found in the
literature is somewhat confusing, we will call such expressions kernels,
symmetric when the two measures are the same (and consequently families
$\left\{  a_{j}(x)\right\}  $ and $\left\{  b_{j}(y)\right\}  $ are the same)
or non-symmetric when the two measures are different.

The most general application of such kernels are the so-called Hilbert-Schmidt
operators considered in functional analysis. Namely, imagine that we have two
Hilbert, real (for the sake of simplicity of argument) spaces $L_{2}%
(d\alpha(x))$ and $L_{2}(d\beta(y))$ of functions that are square integrable
with respect to respectively $d\alpha$ and $d\beta$. Let $\left\{
a_{j}(x)\right\}  $ and $\left\{  b_{j}(y)\right\}  $ be the orthonormal bases
of these spaces. Let us take a function $h$ say, from $L_{2}(d\alpha(x))$. It
can be presented in the following form $h(x)\allowbreak=\allowbreak\sum
_{n\geq0}h_{n}a_{n}(x),$ with $\sum_{n\geq0}h_{n}^{2}<\infty$. Then
\[
f(y)=\int K(x,y)h\left(  x\right)  d\alpha(x)=\sum_{n\geq0}c_{n}h_{n}%
b_{n}(y)\text{.}%
\]
We can observe that if only, $\sum_{n\geq0}c_{n}^{2}<\infty$ then $f\left(
y\right)  \in L_{2}(d\beta(y))$. Moreover, $K(x,y)$ is the kernel defining a
Hilbert--Schmidt operator with the norm equal $\sum_{n\geq0}c_{n}^{2}$.

Now, assume that both families of polynomials $\left\{  a_{j}\right\}  $ and
$\left\{  b\right\}  $ are orthonormal, i.e., $\int a_{j}^{2}d\alpha
\allowbreak=\allowbreak\int b_{j}^{2}d\beta\allowbreak=\allowbreak1$. Then the
condition $\sum_{n\geq0}c_{n}^{2}<\infty$ implies $\sum_{n\geq0}c_{n}^{2}%
a_{n}^{2}\left(  x\right)  <\infty$ almost everywhere mod $d\alpha$ and
$\sum_{n\geq0}c_{n}^{2}b_{n}^{2}\left(  x\right)  <\infty$ almost everywhere
mod $d\beta$. Consequently, for almost all $x\in\operatorname*{supp}\alpha$
$:$ $\sum_{j=0}^{\infty}c_{j}a_{j}(x)b_{j}(y)$ is convergent in mean squares
sense with respect to $d\beta$ (as a function of $y)$. Similarly for $x$ and
$y$ interchanged.

The kernel $K(x,y)$ is positive, if it is non-negative for almost all $x$ mod
$d\alpha$ and $y$ mod $d\beta$. Positive kernels define stationary Markov
processes whose all its conditional moments are polynomials in the
conditioning random variable. How to do it see, e.g., \cite{SzablPoly},
\cite{SzabStac} and \cite{Szab17}. For more introduction and the list of
literature treating this subject was done in the recently published in
\textit{Stochastics }paper \cite{Szab22}. Let us also remark that following
\cite{Szabl21}, positive kernels, scaled to $1$ constitute the totality of
bivariate distributions satisfying condition (\ref{ms}) presented below, whose
conditional moments of, say order $n$, are the polynomials of order not
exceeding $n$ in the conditioning random variable.

The main problem is to present kernels in the compact, simple forms. The
process of doing so is called summing the kernels. It turns out to be a very
hard problem. Only a few kernels are summed. A sample of a few positive,
summed kernels will be presented in Section \ref{lista}.

We will present several compactly supported, bivariate distributions allowing
the so-called Lancaster expansion. In the series of papers \cite{Lancaster58},
\cite{Lancaster63(1)}, \cite{Lancaster63(2)}, and \cite{Lancaster75}, H.
Lancaster analyzed the family of bivariate distributions which allow a special
type of expansion of its Radon-Nikodym derivative of these distributions with
respect to the products of their marginal measures. To be more precise, let's
assume that $d\mu(x,y)$ is the distribution in question and $d\alpha(x)$ and
$d\beta(x)$ are its marginal distributions, and the following condition is met%
\begin{equation}
\int\int\left(  \frac{\partial^{2}\mu(x,y)}{\partial\alpha(x)\partial\beta
(y)}\right)  ^{2}d\alpha(x)d\beta(y)<\infty\text{,} \label{ms}%
\end{equation}
where the integration is over the support of the product measure $d\alpha
d\beta$. It turns out that then the following expansion is convergent at least
in mean square with respect to the product measure:%
\begin{equation}
d\mu(x,y)=d\alpha(x)d\beta(y)\sum_{j=0}^{\infty}c_{j}a_{j}(x)b_{j}(y)\text{.}
\label{Lan}%
\end{equation}

From the theory of orthogonal series, it follows, that if the condition
(\ref{ms}) is satisfied and we deal with expansion (\ref{Lan}) convergent in
mean square, then we must have
\[
\sum_{n\geq0}c_{n}^{2}<\infty\text{,}%
\]
that is $\left\{  c_{n}\right\}  _{n\geq0}\in l_{2}$, the space of square
summable sequences. When equipped in the following norm $\left\Vert
\mathbf{c}\right\Vert \allowbreak=\allowbreak\sum_{n\geq0}c_{n}^{2}$ where:
$\mathbf{c}\allowbreak=\allowbreak\left\{  c_{n}\right\}  _{n\geq0}$, then
$l_{2}$ becomes a Banach space.

In the expansion (\ref{Lan}), $\left\{  a_{j}\right\}  $ and $\left\{
b_{j}\right\}  $ are orthonormal sequences with respect to respectively
$d\alpha$ and $d\beta$. We will call expansions of the form (\ref{Lan})
satisfying a condition (\ref{ms}) Lancaster expansion, briefly LE. One has to
notice, that expressions like (\ref{Lan}), more precisely of the form%
\[
\sum_{j=0}^{\infty}c_{j}a_{j}(x)b_{j}(y)\text{,}%
\]
where $\left\{  a_{j}\right\}  $ and $\left\{  b_{j}\right\}  $ are two
families of polynomials are kernels, discussed above.

Obviously, for probabilists the most interesting are those kernels that are
nonnegative for certain ranges of $x$ and $y$. Nonnegative kernels on some
subsets of $\mathbb{R}^{2}$ will be called Lancaster kernels briefly LK. By
scaling properly, one can use such a kernel to construct a positive, bivariate
measure. In the probabilistic context of this paper, 'LE' will serve as the
most precise and suitable term. Probably the first LE expansion was the
following one:%
\begin{gather*}
\exp(-(x^{2}+y^{2})/2)\sum_{j\geq0}^{\infty}\frac{\rho^{j}}{j!}H_{j}%
(x)H_{j}(y)=\\
\exp(-(x^{2}-2\rho xy+y^{2})/(2(1-\rho^{2}))/(2\pi\sqrt{1-\rho^{2}})\text{,}%
\end{gather*}
that is convergent for all $x,y\in\mathbb{R}$ and $\left\vert \rho\right\vert
<1$. Above, $H_{j}$ denotes the $j$-th element of the family of the so-called
probabilistic Hermite polynomials, described below in Section \ref{poly} (See
\cite{Mercier09}). As mentioned earlier, in Section \ref{lista}, we will list
all known to the author such LE.

Why such expansions are important? Lancaster himself a long time ago pointed
out applications in mathematical statistics, hence we will not repeat these
arguments. In this paper, we will concentrate on applications in distribution
theory by indicating bivariate distributions having a simple structure. As
shown in \cite{Szabl21} a bivariate distribution that allows LE and satisfies
the condition (\ref{ms}) has the property that all its conditional moments of
degree say $n$, are polynomials of order $n$ in the conditioning random
variable. More precisely, for all random variables $(X,Y)$ having bivariate
distribution given by (\ref{Lan}), we have for all $n\geq0:$%
\[
E(a_{n}(X)|Y=y)=c_{n}b_{n}(y)\text{.}%
\]

We have an immediate observation concerning coefficients $\left\{
c_{n}\right\}  _{n\geq0}$ that appear in the definitions of LK. Observe that
if we assume that LK must be a probability distribution (i.e. integrating to
$1$) then the coefficients $\left\{  c_{n}\right\}  _{n\geq0}$ form a convex
cone in the space $l_{2}$ of square summable sequences.

Other possible applications are in the theory of Markov processes. Namely,
recall, that every Markov process $\left\{  X_{t}\right\}  _{t\in\mathbb{R}}$
is completely defined by the two families of measures. The first one is the
family of the so-called marginal distributions, i.e. family (indexed by the
time parameter $t)$ of one-dimensional distributions of a random variable
$X_{t}$. The other family of distributions is the family of the conditional
distributions of $X_{t+\tau}|X_{\tau}=x$, indexed by $t\geq0$ and $x$. One
could utilize symmetric LE almost immediately by finding positive constants
$\gamma_{n}$ such that $\exp(-t\gamma_{n})\allowbreak=\allowbreak c_{n}$ as it
was done e.g. in \cite{Szab22} and define a stationary Markov process.

Many examples of LE stem from the application of the polynomials from the
so-called Askey-Wilson family of polynomials. These polynomials involve
notions of the so-called $q$-series theory, so in the next section we will
present basic notions and facts from this theory. The traditional terminology
calls polynomial appearing within $q$-series theory, quantum polynomials, see,
e.g., an excellent monograph \cite{IA} of Ismail.

The paper is arranged as follows. The next Section \ref{Notation} includes the
traditional notation used in the $q$-series theory and some general results
used in the subsequent section. The next Section \ref{poly} contains a list of
polynomials mostly from the so-called Askey--Wilson (AW) scheme. It is
important to present these polynomials and their relationship to the
$q$-ultraspherical that is the main subject of the paper. The next Section
\ref{main} is dedicated to the proof of our main result i.e. summing and
proving the positivity of certain kernel built of $q$-ultraspherical
polynomials. The next Section \ref{lista} lists simple, known to the author,
summed kernels both symmetric and non-symmetric kernels. Finally, the last
Section \ref{aux1} contains longer, requiring tedious calculations, proofs and
other auxiliary results from $q$-series theory.

\section{Notation, definition and some methods of obtaining LE
\label{Notation}}

$q$ is a parameter. We will assume that $-1<q\leq1$ unless otherwise stated.
The case $q\allowbreak=\allowbreak1$ may not always be considered directly,
but sometimes as left-hand side limit (i.e., $q\longrightarrow1^{-}$). We will
point out these cases.

We will use traditional notations of the $q$-series theory, i.e.,%
\[
\left[  0\right]  _{q}\allowbreak=\allowbreak0,~\left[  n\right]
_{q}\allowbreak=\allowbreak1+q+\ldots+q^{n-1}\allowbreak,\left[  n\right]
_{q}!\allowbreak=\allowbreak\prod_{j=1}^{n}\left[  j\right]  _{q},\text{with
}\left[  0\right]  _{q}!\allowbreak=1\text{,}%
\]%
\[%
\genfrac{[}{]}{0pt}{}{n}{k}%
_{q}=%
\begin{cases}
\frac{\left[  n\right]  _{q}!}{\left[  n-k\right]  _{q}!\left[  k\right]
_{q}!}, & \text{if }n\geq k\geq0\text{ ;}\\
0, & \text{otherwise.}%
\end{cases}
\]
$\binom{n}{k}$ will denote the ordinary, well-known binomial coefficient.

It is useful to use the so-called $q$-Pochhammer symbol for $n\geq1$%
\[
\left(  a|q\right)  _{n}=\prod_{j=0}^{n-1}\left(  1-aq^{j}\right)  ,~~\left(
a_{1},a_{2},\ldots,a_{k}|q\right)  _{n}\allowbreak=\allowbreak\prod_{j=1}%
^{k}\left(  a_{j}|q\right)  _{n}\text{,}%
\]
with $\left(  a|q\right)  _{0}\allowbreak=\allowbreak1$.

Often $\left(  a|q\right)  _{n}$ as well as $\left(  a_{1},a_{2},\ldots
,a_{k}|q\right)  _{n}$ will be abbreviated to $\left(  a\right)  _{n}$ and
\newline$\left(  a_{1},a_{2},\ldots,a_{k}\right)  _{n}$, if it will not cause misunderstanding.

\begin{remark}
In the literature functions also an ordinary Pochhammer symbol, i.e.,
$a(a+1)\ldots(a+n-1)$. We will denote it by $\left(  a\right)  ^{(n)}$ and
call "rising factorial". There, in the literature, functions also the
so-called "falling factorial" equal to $a(a-1)\ldots(a-n+1)$ that we will
denote $\left(  a\right)  _{\left(  n\right)  }$. Hence, in this paper
$\left(  a\right)  _{n}$ would mean $\left(  a|q\right)  _{n}$ as defined above.
\end{remark}

We will also use the following symbol $\left\lfloor n\right\rfloor $ to denote
the largest integer not exceeding $n$.

For further reference we mention the following four formulae from
\cite{KLS}(Subsections 1.8 1.14). Namely, the following formulae are true for
$\left\vert t\right\vert <1$, $\left\vert q\right\vert <1$ (already proved by
Euler, see \cite{Andrews1999} Corollary 10.2.2)
\begin{align}
\frac{1}{(t)_{\infty}}\allowbreak &  =\allowbreak\sum_{k\geq0}\frac{t^{k}%
}{(q)_{k}}\text{, }\frac{1}{(t)_{n+1}}=\sum_{j\geq0}%
\genfrac{[}{]}{0pt}{}{n}{j}%
_{q}t^{j}\text{,}\label{binT}\\
(t)_{\infty}\allowbreak &  =\allowbreak\sum_{k\geq0}(-1)^{k}q^{\binom{k}{2}%
}\frac{t^{k}}{(q)_{k}}\text{, }\left(  t\right)  _{n}=\sum_{j=0}^{n}%
\genfrac{[}{]}{0pt}{}{n}{j}%
_{q}q^{\binom{j}{2}}(-t)^{j}\text{.} \label{naw}%
\end{align}

It is easy to see, that $\left(  q\right)  _{n}=\left(  1-q\right)
^{n}\left[  n\right]  _{q}!$ and that%
\[%
\genfrac{[}{]}{0pt}{}{n}{k}%
_{q}\allowbreak=\allowbreak\allowbreak%
\begin{cases}
\frac{\left(  q\right)  _{n}}{\left(  q\right)  _{n-k}\left(  q\right)  _{k}%
}\text{,} & \text{if }n\geq k\geq0\text{;}\\
0\text{,} & \text{otherwise.}%
\end{cases}
\]
\newline The above-mentioned formula is just an example where direct setting
$q\allowbreak=\allowbreak1$ is senseless however, the passage to the limit
$q\longrightarrow1^{-}$ makes sense.

Notice, that, in particular,
\begin{equation}
\left[  n\right]  _{1}\allowbreak=\allowbreak n\text{,}~\left[  n\right]
_{1}!\allowbreak=\allowbreak n!\text{,}~%
\genfrac{[}{]}{0pt}{}{n}{k}%
_{1}\allowbreak=\allowbreak\binom{n}{k},~(a)_{1}\allowbreak=\allowbreak
1-a\text{,}~\left(  a|1\right)  _{n}\allowbreak=\allowbreak\left(  1-a\right)
^{n} \label{q1}%
\end{equation}
and
\begin{equation}
\left[  n\right]  _{0}\allowbreak=\allowbreak%
\begin{cases}
1\text{,} & \text{if }n\geq1\text{;}\\
0\text{,} & \text{if }n=0\text{.}%
\end{cases}
\text{,}~\left[  n\right]  _{0}!\allowbreak=\allowbreak1\text{,}~%
\genfrac{[}{]}{0pt}{}{n}{k}%
_{0}\allowbreak=\allowbreak1\text{,}~\left(  a|0\right)  _{n}\allowbreak
=\allowbreak%
\begin{cases}
1\text{,} & \text{if }n=0\text{;}\\
1-a\text{,} & \text{if }n\geq1\text{.}%
\end{cases}
\label{q2}%
\end{equation}

$i$ will denote imaginary unit, unless otherwise stated. Let us define also:%

\begin{align}
\left(  ae^{i\theta},ae^{-i\theta}\right)  _{\infty}  &  =\prod_{k=0}^{\infty
}v\left(  x|aq^{k}\right)  \text{,}\label{rozklv}\\
\left(  te^{i\left(  \theta+\phi\right)  },te^{i\left(  \theta-\phi\right)
},te^{-i\left(  \theta-\phi\right)  },te^{-i\left(  \theta+\phi\right)
}\right)  _{\infty}  &  =\prod_{k=0}^{\infty}w\left(  x,y|tq^{k}\right)
\text{,}\label{rozklw}\\
\left(  ae^{2i\theta},ae^{-2i\theta}\right)  _{\infty}  &  =\prod
_{k=0}^{\infty}l\left(  x|aq^{k}\right)  \text{,} \label{rozkll}%
\end{align}
where,
\begin{align}
v(x|a)\allowbreak &  =\allowbreak1-2ax+a^{2}\text{,}\label{vxa}\\
l(x|a)  &  =(1+a)^{2}-4x^{2}a\text{,}\label{lsa}\\
w(x,y|a)  &  =(1-a^{2})^{2}-4xya(1+a^{2})+4a^{2}(x^{2}+y^{2}) \label{wxya}%
\end{align}
and, as usually in the $q$-series theory, $x\allowbreak=\allowbreak\cos\theta$
and $y=\cos\phi$.

We will use also the following notation:%
\[
S(q)\overset{df}{=}%
\begin{cases}
\lbrack-2/\sqrt{1-q},2/\sqrt{1-q}], & \text{if }\left\vert q\right\vert
<1\text{;}\\
\mathbb{R}, & \text{if }q=1\text{.}%
\end{cases}
\]

\subsection{Method of expansion of the ratio of densities}

We will use through the paper the following way of obtaining infinite
expansions of type
\[
\sum_{j\geq0}d_{n}p_{n}(x)\text{,}%
\]
that are convergent almost everywhere on some subset of $\mathbb{R}$. Namely,
in view of \cite{Szab4} let us consider two measures on $\mathbb{R}$ both
having densities $f$ and $g$. Furthermore, suppose that, we know that
$\int(f(x)/g(x))^{2}g(x)dx$ is finite. Further suppose also that we know two
families of orthogonal polynomials $\left\{  a_{n}\right\}  $ and $\left\{
b_{n}\right\}  $, such that the first one is orthogonal with respect to the
measure having the density $f$ and the other is orthogonal with respect to the
measure having the density $g$. Then we know that $f/g$ can be expanded in an
infinite series
\begin{equation}
\sum_{n\geq0}d_{n}b_{n}(x)\text{,} \label{expan}%
\end{equation}
that is convergent in $L^{2}(\mathbb{R},g)$. We know in particular, that
$\sum_{n\geq0}\left\vert d_{n}\right\vert ^{2}<\infty$. If additionally
\[
\sum_{n\geq0}\left\vert d_{n}\right\vert ^{2}\log^{2}(n+1)<\infty\text{,}%
\]
then by the Rademacher--Meshov theorem, we deduce that the series in question
converges not only in $L^{2}$, but also almost everywhere with respect to the
measure with the density $g$.

Thus, we will get the condition $\sum_{n\geq0}\left\vert d_{n}\right\vert
^{2}<\infty$ satisfied for free. Moreover, in many cases we will have
$\left\vert d_{n}\right\vert ^{2}\leq r^{n}$ for some $r<1$. Hence the
condition $\sum_{n\geq0}\left\vert d_{n}\right\vert ^{2}\log^{2}(n+1)<\infty$
is also naturally satisfied. If one knows the connection coefficients between
the families $\left\{  b_{n}\right\}  $ and $\left\{  a_{n}\right\}  $, i.e.,
a set of coefficients $\left\{  c_{k,n}\right\}  _{n\geq1,0\leq k\leq n}$
satisfying%
\[
b_{n}\allowbreak(x)=\allowbreak\sum_{k=0}^{n}c_{k,n}a_{k}(x)\text{,}%
\]
then $d_{n}\allowbreak=\allowbreak c_{0,n}/\int b_{n}^{2}(x)g(x)dx$. We will
refer to this type of reasoning as D(ensity) E(expansion) I(idea) (*,*) (that
is DEI(*,*)), where the first stars point out to the formula for the
connection coefficient and the second star to the formula for $\int b_{n}%
^{2}(x)g(x)dx$.

\section{Families of polynomials appearing in the paper including those
forming part of the Askey-Wilson scheme\label{poly}}

All families of polynomials listed in this section are described in many
positions of literature starting from \cite{AW85}, \cite{Andrews1999},
\cite{IA}. However, as it was noticed by the author in \cite{Szab6}, by
changing the parameters to complex conjugate and changing the usual range of
all variables from $[-1,1]$ to $S(q)$, we obtain polynomials from the AW
scheme suitable for probabilistic applications. Recently, in the review paper
\cite{Szab2020} and a few years earlier in \cite{Szab-bAW} the author
described and analyzed the polynomials of this scheme with conjugate complex
parameters. Thus, we will refer to these two papers for details.

The families of orthogonal polynomials will be identified by their three-term
recurrences. Usually, the polynomials mentioned in such a three-term
recurrence will be monic (i.e., having a coefficient of the highest power of
the variable equal to $1$). The cases when the given three-term recurrence
leads to non-monic polynomials will be clearly pointed out. Together with the
three-term recurrence we will mention the measure, usually having density,
that makes a given family of polynomials orthogonal.

In order not to allow the paper to be too large, we will mention only basic
the properties of the polynomials involved. More properties and relationships
between used families of polynomials could be found in already mentioned
fundamental positions of literature like \cite{AW85}, \cite{Andrews1999},
\cite{IA} or \cite{KLS}. One has to remark that the polynomials of the AW
scheme are presented in their basic versions where all ranges of the variables
are confined to the segment $[-1,1]$ in these positions in the literature. As
mention before, for the probabilistic applications more useful are versions
where variables range over $S(q)$. Then it is possible to pass with $q$ to $1$
and compare the results with the properties of Hermite polynomial and Normal
distribution which are the reference points to all distribution comparisons in
probability theory.

We recall these families of polynomials for the sake of completeness of the paper.

\subsection{Chebyshev polynomials}

They are of two types denoted by $\left\{  T_{n}\right\}  $ and $\left\{
U_{n}\right\}  $ called respectively of the first and second kind satisfying
the same three-term recurrence, for $n\geq1$%
\[
2xU_{n}(x)=U_{n+1}(x)+U_{n-1}(x),
\]
with different initial conditions $T_{0}(x)=1=U_{0}(x)$ and $T_{1}(x)=x$ and
$U_{1}(x)=2x$. Obviously, they are not monic. They are orthogonal respectively
with respect to arcsine distribution with the density $f_{T}(x)\allowbreak
=\allowbreak\frac{1}{\pi\sqrt{1-x^{2}}}$ and to the semicircle or Wigner
distribution with the density $f_{U}(x)\allowbreak=\allowbreak\frac{2}{\pi
}\sqrt{1-x^{2}}$. Besides we have also the following orthogonal relationships:%
\begin{align*}
\int_{-1}^{1}T_{n}(x)T_{m}(x)f_{T}(x)dx\allowbreak &  =\allowbreak%
\begin{cases}
0, & \text{if }m\neq n\text{;}\\
1\text{,} & \text{if }m=n=0\text{;}\\
2\text{,} & \text{if }m=n>0\text{.}%
\end{cases}
\\
\int_{-1}^{1}U_{n}(x)U_{m}(x)f_{U}(x)dx\allowbreak &  =\allowbreak%
\begin{cases}
0\text{,} & \text{if }m\neq n\text{;}\\
1\text{,} & \text{if }m=n\text{.}%
\end{cases}
\end{align*}

More about their properties one can read in \cite{Mason2003}.

\subsection{Hermite polynomials}

We will consider here only the so-called probabilistic Hermite polynomials
namely the ones satisfying the following three-term recurrence:%
\[
H_{n+1}(x)=xH_{n}(x)-nH_{n-1}(x),
\]
with initial conditions $H_{0}(x)=1$, $H_{1}(x)\allowbreak=\allowbreak x$.
They are monic and orthogonal with respect to the Normal $N(0,1)$ distribution
with the well-known density $f_{N}(x)\allowbreak=\allowbreak\frac{1}%
{\sqrt{2\pi}}\exp(-x^{2}/2)$. They satisfy the following orthogonal
relationship:%
\begin{equation}
\int_{-\infty}^{\infty}H_{n}(x)H_{m}(x)f_{N}(x)dx=%
\begin{cases}
0\text{,} & \text{if }m\neq n\text{;}\\
n!\text{,} & \text{if }m=n\text{.}%
\end{cases}
\label{HH}%
\end{equation}

\subsection{$q$-Hermite polynomials}

The following three-term recurrence defines the $q$-Hermite polynomials, which
will be denoted $H_{n}(x|q)$:%
\begin{equation}
xH_{n}\left(  x|q\right)  =H_{n+1}\left(  x|q\right)  \allowbreak+\left[
n\right]  _{q}H_{n-1}\left(  x\right)  \text{,} \label{3trH}%
\end{equation}
for $n\geq1$ with $H_{-1}\left(  x|q\right)  \allowbreak=\allowbreak0$,
$H_{1}\left(  x|q\right)  \allowbreak=\allowbreak1$. Notice, that now
polynomials $H_{n}(x|q)$ are monic and also that
\[
\lim_{q\rightarrow1^{-}}H_{n}(x|q)=H_{n}(x)\text{.}%
\]

Let us define the following nonnegative function where we denoted
\begin{equation}
f_{h}\left(  x|q\right)  =\frac{2\left(  q\right)  _{\infty}\sqrt{1-x^{2}}%
}{\pi}\prod_{k=1}^{\infty}l\left(  x|q^{k}\right)  \text{,} \label{fh}%
\end{equation}
with, as before, $l(x|a)\allowbreak=\allowbreak(1+a)^{2}-4x^{2}a$ and let us
define a new density
\begin{equation}
f_{N}\left(  x|q\right)  \allowbreak=\allowbreak%
\begin{cases}
\sqrt{1-q}f_{h}(x\sqrt{1-q}/2|q)/2\text{,} & \text{if }\left\vert q\right\vert
<1\text{;}\\
\exp\left(  -x^{2}/2\right)  /\sqrt{2\pi}\text{,} & \text{if }q=1\text{.}%
\end{cases}
\label{fN}%
\end{equation}
Notice that $f_{N}$ is non-negative if only $x\in S(q)$. It turns out that we
have the following orthogonal relationship:%
\begin{equation}
\int_{S(q)}H_{n}(x|q)H_{m}(x|q)f_{N}(x|q)dx=%
\begin{cases}
0\text{,} & \text{if }n\neq m\text{;}\\
\left[  n\right]  _{q}!\text{,} & \text{if }n=m\text{.}%
\end{cases}
\label{anH}%
\end{equation}

Notice, that if $X\sim f_{N}(x|q)$, then as $H_{1}(x|q)\allowbreak=\allowbreak
x$, $H_{2}(x|q)\allowbreak=\allowbreak x^{2}-1$, we deduce that $EX\allowbreak
=\allowbreak0$ and $EX^{2}\allowbreak=\allowbreak1$.

\subsection{Big $q$-Hermite polynomials}

More on these polynomials can be read in \cite{Szab2020}, \cite{Szab-rev} or
\cite{Szab-bAW}. Here, we will only mention that these polynomials are, for
example, defined by the relationship:%
\[
H_{n}(x|a,q)\allowbreak=\sum_{j=0}^{n}%
\genfrac{[}{]}{0pt}{}{n}{j}%
_{q}q^{\binom{j}{2}}(-a)^{j}H_{n-j}\left(  x|q\right)  \text{,}%
\]
where $H_{n}$ denotes a continuous $q$-Hermite polynomials, defined above and
$a\in(-1,1)$. They satisfy the following three-term recurrence:%
\[
xH_{n}(x|a,q)=H_{n+1}(x|a,q)+aq^{n}H_{n}(x|a,q)+[n]_{q}H_{n-1}(x|a,q)\text{,}%
\]
with $H_{-1}(x|a,q)\allowbreak=\allowbreak0$ and $H_{0}(x|a,q)\allowbreak
=\allowbreak1$. It is known in particular that the characteristic function of
the polynomials $\left\{  H_{n}\right\}  $ for $\left\vert q\right\vert <1$ is
given by the formula:%
\[
\sum_{n=0}^{\infty}\frac{t^{n}}{\left[  n\right]  _{q}!}H_{n}\left(
x|q\right)  =\varphi\left(  x|t,q\right)  \text{,}%
\]
where
\begin{equation}
\varphi\left(  x|t,q\right)  \allowbreak=\allowbreak\frac{1}{\prod
_{k=0}^{\infty}\left(  1-(1-q)xtq^{k}+(1-q)t^{2}q^{2k}\right)  }\text{.}
\label{chH}%
\end{equation}
Notice, that it is convergent for $t$ such that $\left\vert t\sqrt
{1-q}\right\vert <1$ and $x\in S(q)$. These polynomials satisfy the following
orthogonality relationship:
\[
\int_{S\left(  q\right)  }H_{n}\left(  x|a,q\right)  H_{m}\left(
x|a,q\right)  f_{bN}\left(  x|a,q\right)  dx\allowbreak=\allowbreak\left[
n\right]  _{q}!\delta_{m,n}\text{,}%
\]
where
\[
f_{bN}(x|a,q)=f_{N}(x|q)\varphi(x|a,q).
\]

There exists one more interesting relationship between $q$-Hermite and big
$q$-Hermite polynomials. Namely, following paper of Carlitz \cite{Carlitz72}
we get%
\[
H_{n}(x|a,q)\sum_{j\geq0}\frac{a^{j}}{\left[  j\right]  _{q}!}H_{j}%
(x|q)=\sum_{j\geq0}\frac{a^{j}}{\left[  j\right]  _{q}!}H_{j+n}(x|q).
\]
Note that this expansion nicely compliments and generalizes the results of
Proposition \ref{aux3}.

\subsection{Continuous $q$-ultraspherical polynomials}

These polynomials were first considered by Rogers in 1894 (see \cite{Rogers1},
\cite{Rogers2}, \cite{Rogers3}). They were defined for $\left\vert
x\right\vert \leq1$ by the three-term recurrence given in, e.g.,
\cite{KLS}(14.10.19). We have the celebrated connection coefficient formula
for the Rogers polynomials see \cite{IA},(13.3.1), that will be of use in the
sequel, of course after proper rescaling.%

\begin{equation}
C_{n}\left(  x|\gamma,q\right)  \allowbreak=\allowbreak\sum_{k=0}%
^{\left\lfloor n/2\right\rfloor }\frac{\beta^{k}\left(  \gamma/\beta\right)
_{k}\left(  \gamma\right)  _{n-k}\left(  1-\beta q^{n-2k}\right)  }%
{(q)_{k}\left(  \beta q\right)  _{n-k}\left(  1-\beta\right)  }C_{n-2k}\left(
x|\beta,q\right)  \text{.} \label{CnaC}%
\end{equation}

We will consider polynomials $\left\{  C_{n}\right\}  $, with a different
scaling of the variable $x\in S(q)$ and parameter $\beta\in\lbrack-1,1]$. We
have
\begin{equation}
R_{n}(x|\beta,q)=\left[  n\right]  _{q}!C_{n}(x\sqrt{1-q}/2|\beta
,q)(1-q)^{n/2}\text{,} \label{CnaR}%
\end{equation}
Then, their three-term recurrence becomes
\begin{equation}
\left(  1-\beta q^{n}\right)  xR_{n}\left(  x|\beta,q\right)  =R_{n+1}\left(
x|\beta,q\right)  +\left(  1-\beta^{2}q^{n-1}\right)  \left[  n\right]
_{q}R_{n-1}\left(  x|\beta,q\right)  \text{.} \label{R}%
\end{equation}
with $R_{-1}(x|\beta,q)\allowbreak=\allowbreak0$, $R_{0}(x|\beta
,q)\allowbreak=\allowbreak1$. Let us define the following density (following
\cite{KLS}(14.10.19) after necessary adjustments)
\begin{equation}
f_{C}(x|\beta,q)\allowbreak=\allowbreak\frac{(\beta^{2})_{\infty}}%
{(\beta,\beta q)_{\infty}}f_{h}\left(  x|q\right)  /\prod_{j=0}^{\infty
}l\left(  x|\beta q^{j}\right)  \text{,} \label{fC}%
\end{equation}
where $f_{h}$ is given by (\ref{fh}). Let us modify it by considering
\begin{equation}
f_{R}\left(  x|\beta,q\right)  =\sqrt{1-q}f_{C}(x\sqrt{1-q}/2|\beta
,q)/2\text{.} \label{fR}%
\end{equation}
Then we have the following orthogonal relationship satisfied by polynomials
$\left\{  R_{n}\right\}  $.
\begin{align}
&  \int_{S(q)}R_{n}\left(  x|\beta,q\right)  R_{m}\left(  x|\beta,q\right)
f_{R}\left(  x|\beta,q\right)  dx\label{intR^2}\\
&  =%
\begin{cases}
0\text{,} & \text{if }m\neq n\text{;}\\
\frac{\left[  n\right]  _{q}!(1-\beta)\left(  \beta^{2}\right)  _{n}}{\left(
1-\beta q^{n}\right)  }\text{,} & \text{if }m=n\text{.}%
\end{cases}
\nonumber
\end{align}

Polynomials $\left\{  R_{n}\right\}  $ are not monic. We can easily notice,
that the coefficient by $x^{n}$ in $R_{n}$ is $(\beta)_{n}$. Hence by defining
a new sequence of polynomials
\begin{equation}
V_{n}(x|\beta,q)\allowbreak=\allowbreak R_{n}(x|\beta,q)/(\beta)_{n}\text{,}
\label{monR}%
\end{equation}
we get the sequence of monic versions of the polynomials $\left\{
R_{n}\right\}  $.

Our main result concerns summing certain kernels built of polynomials
$\left\{  R_{n}\right\}  $. Therefore, we need the following lemma that
exposes the relationships between polynomials $\left\{  R_{n}\right\}  $ and
$\left\{  H_{n}\right\}  $.

\begin{lemma}
1)
\begin{align}
R_{n}(x|r,q)  &  =\sum_{k=0}^{\left\lfloor n/2\right\rfloor }\frac{\left[
n\right]  _{q}!}{\left[  k\right]  _{q}!\left[  n-2k\right]  _{q}!}%
q^{\binom{k}{2}}(-r)^{k}\left(  r\right)  _{n-k}H_{n-2k}(x|q)\text{,}%
\label{RnaH}\\
H_{n}(x|q)  &  =\sum_{k=0}^{\left\lfloor n/2\right\rfloor }\frac{\left[
n\right]  _{q}!(1-rq^{n-2k})}{\left[  k\right]  _{q}!\left[  n-2k\right]
_{q}!(1-r)(1-rq)_{n-k}}r^{k}R_{n-2k}(x|r,q)\text{.} \label{HnaR}%
\end{align}

\begin{align}
&  \int_{S(q)}H_{n}(x)R_{m}(x|\beta,q)f_{N}(x|q)dx=\label{HRfN}\\
&
\begin{cases}
0\text{,} & \text{if }n>m\text{ or }n+m\text{ is odd;}\\
q^{\binom{(m-n)/2}{2}}\frac{\left[  m\right]  _{q}!(-\beta)^{(m-n)/2}}{\left[
(m-n)/2\right]  _{q}!}\left(  \beta\right)  _{(m+n)/2}\text{,} &
\text{otherwise.}%
\end{cases}
\nonumber
\end{align}
and
\begin{gather}
\int_{S(q)}H_{n}(x)R_{m}(x|\beta,q)f_{R}(x|\beta,q)dx=\label{HRfR}\\%
\begin{cases}
0\text{,} & \text{if }m>n\text{ or }\left\vert n-m\right\vert \text{ is
odd;}\\
\frac{\beta^{(n-m)/2}\left(  \beta^{2}\right)  _{m}\left[  n\right]  _{q}%
!}{(1-\beta)\left[  (n-m)/2\right]  _{q}!\left(  \beta q\right)  _{(n+m)/2}%
}\text{,} & \text{ otherwise.}%
\end{cases}
\nonumber
\end{gather}

where density $f_{R}$ is given by (\ref{fR}) and moreover, can be presented in
one of the following equivalent forms:%
\begin{gather}
f_{R}(x|\beta,q)=(1-\beta)f_{CN}(x|x,\beta,q)=\label{fRR}\\
f_{N}(x|q)\frac{\left(  \beta^{2}\right)  _{\infty}}{\left(  \beta\right)
_{\infty}\left(  \beta q\right)  _{\infty}\prod_{j=0}^{\infty}\left(  (1+\beta
q^{j}\right)  ^{2}-(1-q)\beta q^{j}x^{2}))}=\nonumber\\
(1-\beta)f_{N}(x|q)\sum_{n\geq0}\frac{\beta^{n}H_{n}^{2}(x|q)}{\left[
n\right]  _{q}!}=(1-\beta)f_{N}(x|q)\sum_{n\geq0}\frac{\beta^{n}H_{2n}%
(x|q)}{\left[  n\right]  _{q}!\left(  \beta\right)  _{n+1}}\text{.}\nonumber
\end{gather}
We also have the following expansion of $f_{N}/f_{R}$ in orthogonal series in
polynomials $\left\{  R_{n}\right\}  :$%

\begin{equation}
f_{N}(x|q)=f_{R}(x|\gamma,q)\sum_{n\geq0}(-\gamma)^{n}q^{\binom{n}{2}}%
\frac{(\gamma)_{n}(1-\gamma q^{2n})}{\left[  n\right]  _{q}!(1-\gamma
)(\gamma^{2})_{2n}}R_{2n}(x|\gamma,q)\text{.} \label{fNfR}%
\end{equation}

2) We also have the following linearization formulae%
\begin{gather}
R_{n}(x|r,q)R_{m}(x|r,q)\label{RRnaR}\\
=\sum_{k=0}^{\min(n,m)}%
\genfrac{[}{]}{0pt}{}{m}{k}%
_{q}%
\genfrac{[}{]}{0pt}{}{n}{k}%
_{q}\left[  k\right]  _{q}!\frac{\left(  r\right)  _{m-k}\left(  r\right)
_{n-k}\left(  r\right)  _{k}\left(  r^{2}\right)  _{n+m-k}\left(
1-rq^{n+m-2k}\right)  }{\left(  1-r\right)  \left(  rq\right)  _{n+m-k}\left(
r^{2}\right)  _{m+n-2k}}\times\nonumber\\
R_{n+m-2k}(x|r,q)\text{,}\nonumber
\end{gather}

\begin{gather}
H_{m}(x|q)R_{n}(x|r,q)=\label{HRnaH}\\
\sum_{s=0}^{\left\lfloor (n+m)/2\right\rfloor }%
\genfrac{[}{]}{0pt}{}{n}{s}%
_{q}\left[  s\right]  _{q}!H_{n+m-2s}(x|q)\sum_{k=0}^{s}%
\genfrac{[}{]}{0pt}{}{m}{s-k}%
_{q}%
\genfrac{[}{]}{0pt}{}{n-s}{k}%
_{q}(-r)^{k}q^{\binom{k}{2}}(r)_{n-k}\text{,}\nonumber\\
H_{m}(x|q)R_{n}(x|r,q)=\label{HRnaR}\\
\sum_{u=0}^{\left\lfloor (n+m)/2\right\rfloor }\frac{\left[  n\right]
_{q}!\left[  m\right]  _{q}!(1-rq^{n+m-2u})}{\left[  u\right]  _{q}!\left[
n+m-2u\right]  _{q}!(1-r)}R_{n+m-2u}(x|r,q)\sum_{s=0}^{u}%
\genfrac{[}{]}{0pt}{}{u}{s}%
_{q}\frac{r^{u-s}}{\left(  rq\right)  _{n+m-u-s}}\times\nonumber\\
\sum_{k=0}^{s}%
\genfrac{[}{]}{0pt}{}{s}{k}%
_{q}%
\genfrac{[}{]}{0pt}{}{m+m-2s}{m+k-s}%
_{q}q^{\binom{k}{2}}(-r)^{k}\left(  r\right)  _{n-k}\text{.}\nonumber
\end{gather}

\end{lemma}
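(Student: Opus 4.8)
The plan is to reduce the entire lemma to a single input, the Rogers connection-coefficient identity (\ref{CnaC}), together with the $q$-Hermite linearization formula, and then to extract every integral and expansion formula by orthogonality. The starting observation is that the recurrence (\ref{R}) collapses to the $q$-Hermite recurrence (\ref{3trH}) when $\beta=0$, so that $H_{n}(x|q)=R_{n}(x|0,q)$. To prove (\ref{RnaH}) I would put $\gamma=r$ in (\ref{CnaC}) and let the \emph{target} parameter $\beta\to0^{+}$. The only delicate factor is $\beta^{k}(\gamma/\beta)_{k}=\prod_{j=0}^{k-1}(\beta-\gamma q^{j})\to(-\gamma)^{k}q^{\binom{k}{2}}$; every other $\beta$-dependent factor in (\ref{CnaC}) tends to $1$. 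Rescaling through (\ref{CnaR}) and using $(q)_{k}=(1-q)^{k}[k]_{q}!$ to absorb the powers of $1-q$ then yields (\ref{RnaH}). The inverse (\ref{HnaR}) is obtained symmetrically by setting $\gamma=0$ in (\ref{CnaC}) while keeping the target parameter equal to $r$, and rescaling the same way.

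The integral formulae (\ref{HRfN}) and (\ref{HRfR}) are then pure sifting computations. For (\ref{HRfN}) I would substitute (\ref{RnaH}) for $R_{m}$ and integrate termwise against $H_{n}(x|q)f_{N}(x|q)$; the orthogonality (\ref{anH}) annihilates every term except $m-2k=n$, forcing $m\geq n$ with $m+n$ even and leaving the stated constant. For (\ref{HRfR}) I would instead substitute (\ref{HnaR}) for $H_{n}$ and integrate against $R_{m}(x|\beta,q)f_{R}(x|\beta,q)$, using the norm (\ref{intR^2}); now only $n-2k=m$ survives. As for the density forms in (\ref{fRR}), the product form is immediate from (\ref{fR}), (\ref{fC}), (\ref{fh}) once one uses $l(x\sqrt{1-q}/2|a)=(1+a)^{2}-(1-q)ax^{2}$ from (\ref{lsa}) and $(\beta,\beta q)_{\infty}=(\beta)_{\infty}(\beta q)_{\infty}$; the series in $H_{n}^{2}(x|q)$ is the diagonal $q$-Mehler (Poisson) kernel, matched to the product form by a standard $q$-series identity, and the series in $H_{2n}(x|q)$ then follows most cleanly by expanding $f_{R}/f_{N}$ in the orthogonal basis $\{H_{k}\}$ and reading the coefficient of $H_{2n}$ off (\ref{HRfR}) with $m=0$. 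The reverse expansion (\ref{fNfR}) is the mirror image: expand $f_{N}/f_{R}$ in the basis $\{R_{n}(\cdot|\gamma,q)\}$, whose $n$th coefficient is $\int f_{N}R_{n}\,dx$ divided by the norm (\ref{intR^2}); the numerator is exactly (\ref{HRfN}) with $n=0$, which vanishes for odd indices and gives the advertised coefficient of $R_{2n}$. Convergence in $L^{2}$ and almost everywhere is then guaranteed by the DEI / Rademacher--Menshov argument of Section~\ref{Notation}, since all coefficients decay geometrically.

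For Part 2 the workhorse is the $q$-Hermite linearization
\[
H_{n}(x|q)H_{m}(x|q)=\sum_{k=0}^{\min(n,m)}\genfrac{[}{]}{0pt}{}{n}{k}_{q}\genfrac{[}{]}{0pt}{}{m}{k}_{q}[k]_{q}!\,H_{n+m-2k}(x|q),
\]
which I would prove by induction on $n$ via (\ref{3trH}). Granting it, (\ref{HRnaH}) follows by expanding $R_{n}$ through (\ref{RnaH}), linearizing each product $H_{m}H_{n-2k}$, and re-indexing by the total lowering $s$; the collapse to the stated double sum rests on the $q$-binomial identity
\[
\frac{[n]_{q}!}{[k]_{q}![n-2k]_{q}!}\genfrac{[}{]}{0pt}{}{n-2k}{s-k}_{q}[s-k]_{q}!=\genfrac{[}{]}{0pt}{}{n}{s}_{q}[s]_{q}!\genfrac{[}{]}{0pt}{}{n-s}{k}_{q},
\]
both sides being $[n]_{q}!/([k]_{q}![n-s-k]_{q}!)$. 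The remaining two are compositions: (\ref{RRnaR}) arises by writing both factors in the $H$-basis via (\ref{RnaH}), linearizing, and re-expanding in the $R$-basis via (\ref{HnaR}) (equivalently, by rescaling the classical Rogers linearization through (\ref{CnaR})), while (\ref{HRnaR}) arises by feeding the output of (\ref{HRnaH}) back through (\ref{HnaR}).

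The hard part will be the bookkeeping in Part 2: verifying the nested $q$-binomial identities that collapse the triple sums in (\ref{RRnaR}) and (\ref{HRnaR}) into the stated closed forms, where the sign factors $(-r)^{k}q^{\binom{k}{2}}$ and the $q$-Pochhammer ratios must be matched term by term. By contrast, Part 1 is essentially mechanical once the single limit $\beta^{k}(\gamma/\beta)_{k}\to(-\gamma)^{k}q^{\binom{k}{2}}$ and the norm (\ref{intR^2}) are in place.
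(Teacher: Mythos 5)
Your proposal is correct and follows essentially the same route as the paper: the connection formulae come from the $\beta\to0$ (resp. $\gamma=0$) specializations of (\ref{CnaC}) together with the limit $\beta^{k}(\gamma/\beta)_{k}\to(-\gamma)^{k}q^{\binom{k}{2}}$ and the rescaling (\ref{CnaR}), the integrals (\ref{HRfN}) and (\ref{HRfR}) by orthogonality sifting against (\ref{anH}) and (\ref{intR^2}), the expansion (\ref{fNfR}) by the DEI mechanism, and (\ref{HRnaR}) by substituting (\ref{HnaR}) into (\ref{HRnaH}). The only divergence is that where the paper simply cites earlier sources for (\ref{fRR}), (\ref{RRnaR}) and (\ref{HRnaH}) (respectively an earlier proposition of the author, Rogers' classical linearization, and Al-Salam--Ismail), you supply short derivations instead; in particular your $q$-binomial identity collapsing the double sum in (\ref{HRnaH}) checks out, both sides being equal to $\left[n\right]_{q}!/\left(\left[k\right]_{q}!\left[n-s-k\right]_{q}!\right)$.
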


\begin{proof}
1) (\ref{HnaR}) and (\ref{RnaH}) are adaptations of (\ref{CnaC}) with either
$\beta\allowbreak=\allowbreak0$ or $\gamma\allowbreak=\allowbreak0$. When we
consider $\beta\allowbreak=\allowbreak0$, one has to be careful and notice,
that
\[
\beta^{n}(\gamma/\beta)_{n}=\prod_{j=0}^{n-1}(\beta-\gamma q^{j}%
)\rightarrow(-\gamma)^{n}q^{\binom{n}{2}}\text{,}%
\]
where the limit is taken when $\beta\rightarrow0$. When rescaling. to $S(q)$
we use formula (\ref{CnaR}). Formulae (\ref{HRfN}) and (\ref{HRfR}) follow
almost directly expansions respectively (\ref{RnaH}), (\ref{HnaR}) and the
fact that
\[
\int_{S(q)}H_{n}(x|q)H_{m}(x|q)f_{N}(x|q)dx=\left[  n\right]  _{q}!\delta
_{nm}\text{.}%
\]
in the first case and (\ref{intR^2}) in the second. Formula (\ref{fRR}) is
given in \cite{Szab19}(Proposition 1(3.3)). To get (\ref{fNfR}) we use
DEI(\ref{RnaH},\ref{HH}).

2) Again (\ref{RRnaR}) is an adaptation of the well-known formulae derived by
Rogers himself by the end of the nineteenth century concerning polynomial
$C_{n}$ related to polynomials $R_{n}$ by the formula (\ref{CnaR}). Formula
(\ref{HRnaH}) appeared in the version for polynomials $h$ and $C$ in
\cite{Szab2020}(8.3) but in fact, it was proved by Al-Salam and Ismail. in
\cite{ALIs88}. Formula (\ref{HRnaR}) is obtained directly by inserting
(\ref{HnaR}) into (\ref{HRnaH}).
\end{proof}

\subsection{Al-Salam--Chihara polynomials.}

Al-Salam--Chihara polynomials were defined first for $\left\vert x\right\vert
\leq1$ with $q$ and two other parameters $a$ and $b$ both from the segment
$[-1,1]$. We will consider $a$ and $b$ being complex conjugate. Let us define
the new parameters $\rho$ and $y$ in the following way: $ab\allowbreak
=\allowbreak\rho^{2}$ and $a+b\allowbreak=\allowbreak\frac{y}{\sqrt{1-q}}\rho$
and $y\in S\left(  q\right)  $. Then they will be denoted as $P_{n}%
(x|y,\rho,q)$ with these new parameters. The polynomials $\left\{
P_{n}\right\}  $, as demonstrated in \cite{Szab6}, can be interpreted in a
probabilistic manner as conditional expectations. The denotation
$P_{n}(x|y,\rho,q)$ reflects this conditional interpretation. It is known,
(see \cite{IA}, \cite{Szab-bAW} or \cite{Szab6}) that they satisfy the
following three-term recurrence:
\begin{equation}
P_{n+1}(x|y,\rho,q)=(x-\rho yq^{n})P_{n}(x|y,\rho,q)-(1-\rho^{2}%
q^{n-1})[n]_{q}P_{n-1}(x|y,\rho,q)\text{,} \label{3Pn}%
\end{equation}
with $P_{-1}(x|y,\rho,q)\allowbreak=\allowbreak0$ and $P_{0}(x|y,\rho
,q)\allowbreak=\allowbreak1$.

These polynomials are orthogonal with respect to the measure with the
following density:
\begin{equation}
f_{CN}\left(  x|y,\rho,q\right)  =f_{N}(x|q)\frac{(\rho^{2})_{\infty}%
}{W(x,y|\rho,q)}\text{,} \label{fCN}%
\end{equation}
where
\begin{equation}
W(x,y|\rho,q)\allowbreak=\allowbreak\prod_{k=0}^{\infty}\hat{w}_{q}\left(
x,y|\rho q^{k},q\right)  \allowbreak\text{,} \label{WW}%
\end{equation}
and
\begin{align*}
\hat{w}_{q}(x,y|\rho,q)\allowbreak &  =\allowbreak(1-\rho^{2})^{2}%
\allowbreak-\allowbreak(1-q)\rho xy(1+\rho^{2})\allowbreak+\allowbreak\rho
^{2}(1-q)(x^{2}+y^{2})\allowbreak\\
&  =\allowbreak w(x\sqrt{1-q}/2,y\sqrt{1-q}/2|\rho)\text{,}%
\end{align*}
where $w$ is given by (\ref{wxya}). M. Ismail. showed that
\begin{equation}
f_{CN}(x|y,\rho,q)\rightarrow\exp\left(  -\frac{(x-\rho y)^{2}}{2(1-\rho^{2}%
)}\right)  /\sqrt{2\pi(1-\rho^{2})}\text{,} \label{fCN1}%
\end{equation}
as $q\rightarrow1$. That is, $f_{CN}$ tends as $q->1^{-}$ to the density of
the normal $N(\rho y,1-\rho^{2})$ distribution. That is why $f_{CN}$ is called
conditional $q$-Normal.

The orthogonal relation for these polynomials are the following:
\begin{equation}
\int_{S\left(  q\right)  }P_{n}(x|y,\rho,q)P_{m}\left(  x|y,\rho,q\right)
f_{CN}\left(  x|y,\rho,q\right)  dx=%
\begin{cases}
0\text{,} & \text{if }m\neq n\text{;}\\
\left[  n\right]  _{q}!\left(  \rho^{2}\right)  _{n}\text{,} & \text{if
}m=n\text{.}%
\end{cases}
\label{PnPm}%
\end{equation}
Another fascinating property of the distribution $f_{CN}$ is the following
Chapman-Kolmogorov property:%
\begin{equation}
\int_{S\left(  q\right)  }f_{CN}\left(  z|y,\rho_{1},q\right)  f_{CN}\left(
y|x,\rho_{2},q\right)  dy=f_{CN}\left(  x|z,\rho_{1}\rho_{2},q\right)
\text{.} \label{ChK}%
\end{equation}

As shown in \cite{bms}, the relationship between the two families of
polynomials $\left\{  H_{n}\right\}  $ and $\left\{  P_{n}\right\}  $ are the
following:
\begin{align}
P_{n}\left(  x|y,\rho,q\right)   &  =\sum_{j=0}^{n}%
\genfrac{[}{]}{0pt}{}{n}{j}%
_{q}\rho^{n-j}B_{n-j}\left(  y|q\right)  H_{j}\left(  x|q\right)
\text{,}\label{PnaH}\\
H_{n}\left(  x|q\right)   &  =\sum_{j=0}^{n}%
\genfrac{[}{]}{0pt}{}{n}{j}%
_{q}\rho^{n-j}H_{n-j}\left(  y|q\right)  P_{j}\left(  x|y,\rho,q\right)
\text{,} \label{HnaP}%
\end{align}
where polynomials $\left\{  B_{n}\right\}  $ satisfy the following three-term
recurrence
\[
B_{n+1}(x|q)=-xq^{n}B_{n}(x|q)+q^{n-1}\left[  n\right]  _{q}B_{n-1}%
(x|q)\text{,}%
\]
with $B_{-1}(x|q)\allowbreak=\allowbreak0$, $B_{0}(x|q)\allowbreak
=\allowbreak1.$

It has been noticed in \cite{Szab-bAW} or \cite{Szab-rev} that the following
particular cases are true.

\begin{proposition}
\label{special}For $n\geq0$ we have

i) $R_{n}\left(  x|0,q\right)  \allowbreak=\allowbreak H_{n}\left(
x|q\right)  $,

ii) $R_{n}\left(  x|q,q\right)  \allowbreak=\allowbreak\left(  q\right)
_{n}U_{n}\left(  x\sqrt{1-q}/2\right)  $,

iii) $\lim_{\beta->1^{-}}\frac{R_{n}\left(  x|\beta,q\right)  }{\left(
\beta\right)  _{n}}\allowbreak=\allowbreak2\frac{T_{n}\left(  x\sqrt
{1-q}/2\right)  }{(1-q)^{n/2}}$,

iv) $R_{n}(x|\beta,1)\allowbreak=\allowbreak\left(  \frac{1+\beta}{1-\beta
}\right)  ^{n/2}H_{n}\left(  \sqrt{\frac{1-\beta}{1+\beta}}x\right)  $,

v) $R_{n}(x|\beta,0)\allowbreak=\allowbreak(1-\beta)U_{n}(x/2)-\beta
(1-\beta)U_{n-2}(x/2)$.

vi) $R_{n}(x|\beta,q)=P_{n}(x|x,\beta,q)$, where $\left\{  P_{n}\right\}  $
are defined by its three-term recurrence (\ref{3Pn}).
\end{proposition}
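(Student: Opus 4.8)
The six identities all say that, at a distinguished value of a parameter or in a limit, the family $\left\{ R_{n}\right\} $ collapses onto a classical family. The single principle I would use throughout is the uniqueness of a polynomial sequence determined by a three-term recurrence together with the two initial data $R_{-1}=0$, $R_{0}=1$. Since $\left\{ R_{n}\right\} $ is pinned down by (\ref{R}) and each target family is pinned down by its own recurrence, in every case it suffices to (a) substitute the special parameter into (\ref{R}) (or pass to the limit), (b) introduce a scaling $R_{n}=\lambda^{n}p_{n}(cx)$ converting the target recurrence into the specialized (\ref{R}), and (c) match the coefficients of $p_{n+1}$ and $p_{n-1}$ together with the two initial values. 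The constants $\lambda ,c$ are forced by the two coefficient matches and are precisely the rescalings encoded in (\ref{CnaR}).

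The cleanest case is vi). Setting $y=x$ and $\rho =\beta $ in the Al-Salam--Chihara recurrence (\ref{3Pn}) gives $P_{n+1}=x(1-\beta q^{n})P_{n}-(1-\beta ^{2}q^{n-1})[n]_{q}P_{n-1}$, which is literally (\ref{R}) rearranged as $R_{n+1}=x(1-\beta q^{n})R_{n}-(1-\beta ^{2}q^{n-1})[n]_{q}R_{n-1}$; the initial data agree, so vi) is immediate. For i) I would set $\beta =0$ in (\ref{R}): every factor $(1-\beta q^{n})$ and $(1-\beta ^{2}q^{n-1})$ becomes $1$ and the recurrence becomes the $q$-Hermite recurrence (\ref{3trH}); equivalently, putting $r=0$ in the explicit expansion (\ref{RnaH}) kills all terms with $k\geq 1$ and leaves $H_{n}(x|q)$. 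Case ii) I would read off from the classical specialization $C_{n}(t|q,q)=U_{n}(t)$ of the Rogers polynomials followed by the rescaling (\ref{CnaR}); alternatively, substitute $\beta =q$ directly into (\ref{R}) and verify that the appropriately rescaled $U_{n}$ solves it. Case v) is the $q=0$ specialization: using (\ref{q2}) the recurrence (\ref{R}) collapses to $R_{n+1}=xR_{n}-R_{n-1}$ for $n\geq 2$ with the exceptional low-index relations $R_{1}=(1-\beta )x$ and $R_{2}=xR_{1}-(1-\beta ^{2})$, and one checks by induction that the finite Chebyshev combination on the right-hand side of v) satisfies the same data.

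The two genuinely limiting statements are iii) and iv). For iii) I would first pass to the monic polynomials $V_{n}=R_{n}/(\beta )_{n}$ of (\ref{monR}); dividing (\ref{R}) by $(\beta )_{n}$ and using $(\beta )_{n+1}=(\beta )_{n}(1-\beta q^{n})$ and $(\beta )_{n}=(\beta )_{n-1}(1-\beta q^{n-1})$ turns it into a recurrence for $V_{n}$ whose coefficients have finite limits as $\beta \to 1^{-}$. In the limit the recurrence becomes $xV_{n}=V_{n+1}+\tfrac{1}{1-q}V_{n-1}$, which is exactly the recurrence satisfied by the rescaled Chebyshev polynomials $2T_{n}(x\sqrt{1-q}/2)/(1-q)^{n/2}$; matching the exceptional $n=1$ relation then fixes the factor $2$. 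For iv) I would set $q=1$ and invoke (\ref{q1}): (\ref{R}) becomes $(1-\beta )xR_{n}=R_{n+1}+(1-\beta ^{2})nR_{n-1}$, and the ansatz $R_{n}=\lambda ^{n}H_{n}(cx)$ together with the probabilists' Hermite recurrence forces $\lambda $ and $c$ through the two coefficient equations; equivalently one substitutes $q=1$ into (\ref{RnaH}) and recognizes the resulting sum as the classical Hermite-rescaling identity.

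The conceptual content is small — everything reduces to matching three-term recurrences — so the real labor, and the only place errors creep in, is the bookkeeping of normalization constants. I expect two main obstacles. First, the exact powers of $(1-q)$ and $(1-\beta )$ produced by the rescaling (\ref{CnaR}) and by the monic normalization (\ref{monR}) must be tracked with care; these are precisely the factors $(1-q)^{n/2}$, $(\beta )_{n}$ and the $\lambda ^{n}$ appearing in ii)--iv). Second, several of the identities (notably iii) and v)) are genuinely statements for $n\geq 1$ only: at $n=0$ the factor $2$ in iii) and the $U_{-2}$ convention in v) make the right-hand side disagree with $R_{0}=1$, so the low-index terms must be treated by hand and the uniqueness argument run from the first index at which both sides are known to coincide.
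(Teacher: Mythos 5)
The paper offers no proof of Proposition \ref{special} at all: it is stated as a fact ``noticed in'' \cite{Szab-bAW} and \cite{Szab-rev}, with no proof environment attached. Your recurrence-matching argument is therefore a self-contained substitute rather than a variant of an existing argument, and it is sound. For vi) and i) the specialized recurrences and initial data coincide literally with (\ref{3Pn}) at $y=x$, $\rho=\beta$ and with (\ref{3trH}); for ii)--v) the scaling ansatz plus induction from the first index at which both sides agree does the job, and the limit in iii) is handled correctly through the monic normalization (\ref{monR}), whose $n=1$ coefficient $\frac{1+\beta}{1-\beta q}\to\frac{2}{1-q}$ is exactly what produces the factor $2$. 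Your warning that the only real hazard is the bookkeeping of normalization constants is not hypothetical: carrying out your own $n=1$ check shows that ii) and iv) as printed are off by precisely such factors. From (\ref{R}) at $n=0$ one gets $R_{1}(x|\beta,q)=(1-\beta)x$, so $R_{1}(x|q,q)=(1-q)x$, whereas the right-hand side of ii) equals $(1-q)^{3/2}x$; the version consistent with (\ref{CnaR}) and $C_{n}(t|q,q)=U_{n}(t)$ is $R_{n}(x|q,q)=(q)_{n}U_{n}\left(x\sqrt{1-q}/2\right)/(1-q)^{n/2}$. Similarly $R_{1}(x|\beta,1)=(1-\beta)x$ while the right-hand side of iv) gives $x$, so iv) is really the statement for the monic polynomials $R_{n}/(\beta)_{n}$ (equivalently the right side needs an extra factor $(1-\beta)^{n}=(\beta|1)_{n}$). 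Your observations that iii) and v) fail at $n=0$ (the factor $2$, and the $U_{-2}=-1$ convention) are likewise correct. In short, the strategy is right, more informative than the paper's bare citation, and if executed in full it would also sharpen the statement.
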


Having done all those preparations we are ready to present our main result.

\section{Positive, summable kernel built of $q$-ultraspherical polynomials
\label{main}}

One can find in the literature attempts to sum the kernels for $q$%
-ultraspherical polynomials like, e.g., \cite{GasRah} or \cite{Rahman97}.
However, the kernel we will present has a simple sum and moreover depends on
two parameters, more precisely on the symmetric function of two parameters,
not on only one.

Let us denote by $w_{n}(m,r_{1},r_{2},q)$ the following symmetric polynomial
of degree $2n$ and by $\phi_{n}(r_{1},r_{2},q)$ the following rational,
symmetric function, both in $r_{1}$ and $r_{2}$ that will often appear in the
sequel:%
\begin{align}
w_{n}(m,r_{1},r_{2},q)  &  =\sum_{s=0}^{n}%
\genfrac{[}{]}{0pt}{}{n}{s}%
_{q}r_{1}^{s}\left(  q^{m}r_{2}^{2}\right)  _{s}r_{2}^{n-s}\left(  q^{m}%
r_{1}^{2}\right)  _{n-s}\text{,}\label{wm}\\
\phi_{n}(r_{1},r_{2},q)  &  =\frac{w_{n}(0,r_{1},r_{2},q)}{\left(  r_{1}%
^{2}r_{2}^{2}\right)  _{n}}\text{.} \label{fi}%
\end{align}

Let us notice immediately, that
\[
w_{n}(m,r_{1},r_{2},q)=q^{-nm/2}w_{n}(0,r_{1}q^{m/2},r_{2}q^{m/2},q)\text{.}%
\]

\begin{theorem}
\label{glowny}The following symmetric bivariate kernel is nonnegative on
$S(q)\times S(q)$ and for $\left\vert r_{1}\right\vert ,\left\vert
r_{2}\right\vert <1:$%
\[
\sum_{n\geq0}\phi_{n}(r_{1},r_{2},q)\frac{(1-r_{1}r_{2}q^{n})}{\left[
n\right]  _{q}!(r_{1}^{2}r_{2}^{2})_{n}(1-r_{1}r_{2})}R_{n}(x|r_{1}%
r_{2},q)R_{n}(y|r_{1}r_{2},q)\text{,}%
\]
where $\left\{  R_{n}(x|\beta,q\right\}  _{n\geq0}$ are the $q$-ultraspherical
polynomials defined by the three-term recurrence (\ref{R}). Functions
$\left\{  w_{n}\right\}  $ are given by (\ref{wm}).

Moreover, we have%
\begin{gather}
f_{R}(x|r_{1}r_{2},q)f_{R}(y|r_{1}r_{2},q)\sum_{n\geq0}\frac{\phi_{n}%
(r_{1},r_{2},q)(1-r_{1}r_{2}q^{n})}{\left[  n\right]  _{q}!(r_{1}^{2}r_{2}%
^{2})_{n}(1-r_{1}r_{2})}R_{n}(x|r_{1}r_{2},q)R_{n}(y|r_{1}r_{2},q)
\label{LER}\\
=(1-r_{1}r_{2})f_{CN}\left(  y|x,r_{1},q\right)  f_{CN}\left(  x|y,r_{2}%
,q\right)  \text{.}\nonumber
\end{gather}
where $f_{CN}\left(  x|y,r_{2},q\right)  $ denotes the so-called conditional
$q$-Normal distribution, defined by (\ref{fCN}).

Denoting by $\hat{R}_{n}(x|r_{1}r_{2},q)\allowbreak=\allowbreak R_{n}%
(x|r_{1}r_{2},q)\sqrt{1-r_{1}r_{2}q^{n}}/\sqrt{\left[  n\right]  _{q}!\left(
r_{1}^{2}r_{2}^{2}\right)  _{n}(1-r_{1}r_{2})}$ the orthonormal version of the
polynomials $R_{n}$ we get more friendly version of our result%
\begin{align}
&  (1-r_{1}r_{2})f_{CN}\left(  y|x,r_{1},q\right)  f_{CN}\left(
x|y,r_{2},q\right) \label{LERO}\\
&  =f_{R}(x|r_{1}r_{2},q)f_{R}(y|r_{1}r_{2},q)\sum_{n\geq0}\phi_{n}%
(r_{1},r_{2},q)\hat{R}_{n}(x|r_{1}r_{2},q)\hat{R}_{n}(y|r_{1}r_{2}%
,q)\text{.}\nonumber
\end{align}

\end{theorem}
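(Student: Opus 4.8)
The plan is to reduce everything to the single summation identity (\ref{LER}); once it is in hand, the nonnegativity of the kernel and the orthonormal reformulation (\ref{LERO}) are immediate. Indeed, for $x,y$ in the interior of $S(q)$ the weights $f_R(x|r_1r_2,q)$ and $f_R(y|r_1r_2,q)$ are strictly positive, the factor $1-r_1r_2$ is positive because $|r_1|,|r_2|<1$, and $f_{CN}\ge0$; hence dividing (\ref{LER}) by $f_R(x|r_1r_2,q)f_R(y|r_1r_2,q)$ exhibits the kernel as a quotient of nonnegative quantities. Writing $\beta=r_1r_2$, the statement (\ref{LERO}) is just (\ref{LER}) rewritten in the orthonormal polynomials $\hat R_n$, so no separate argument is needed there.

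To prove (\ref{LER}) I would treat it as a Lancaster expansion and match Fourier coefficients in the product orthonormal basis $\{\hat R_n(x|\beta,q)\hat R_m(y|\beta,q)\}$ relative to the product weight $f_R(x|\beta,q)f_R(y|\beta,q)$. Set $g(x,y)=(1-\beta)f_{CN}(y|x,r_1,q)f_{CN}(x|y,r_2,q)$. First I would verify the sanity check that pins down the geometry: using the $q$-analogue of Mehler's formula $f_{CN}(x|y,\rho,q)=f_N(x|q)\sum_{k\ge0}\rho^k H_k(x|q)H_k(y|q)/[k]_q!$ together with the $q$-Hermite orthogonality (\ref{anH}) and the representation (\ref{fRR}), one gets $\int_{S(q)}g(x,y)\,dx=f_R(y|\beta,q)$ and symmetrically $\int_{S(q)}g(x,y)\,dy=f_R(x|\beta,q)$, so the marginals are exactly the ultraspherical weights with parameter $\beta$. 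This confirms the normalization and shows the expansion must be carried out in the $R_n(\cdot|\beta,q)$ system. By Parseval it then suffices to compute
\[
I_{n,m}=\int_{S(q)}\int_{S(q)}f_{CN}(y|x,r_1,q)f_{CN}(x|y,r_2,q)R_n(x|\beta,q)R_m(y|\beta,q)\,dx\,dy
\]
and to show that $I_{n,m}=0$ for $n\ne m$ while $I_{n,n}=[n]_q!\,w_n(0,r_1,r_2,q)/(1-\beta q^n)$; comparing with the norm (\ref{intR^2}) this is exactly equivalent to the coefficient $\phi_n(r_1,r_2,q)$ of (\ref{fi}) appearing in (\ref{LERO}).

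The computation of $I_{n,m}$ is where the real work lies. Inserting the two Mehler expansions decouples the double integral into $I_{n,m}=\sum_{k,l\ge0}\frac{r_1^k r_2^l}{[k]_q![l]_q!}A^{(n)}_{k,l}A^{(m)}_{k,l}$, where $A^{(n)}_{k,l}=\int_{S(q)}H_k(x|q)H_l(x|q)R_n(x|\beta,q)f_N(x|q)\,dx$. Each $A^{(n)}_{k,l}$ is evaluated by linearizing $H_k H_l$ (the $r\to0$ specialization of (\ref{HRnaH})) and applying the explicit integral (\ref{HRfN}); note that (\ref{HRfN}) already forces the parity constraint $k+l\equiv n\pmod 2$, which alone kills $I_{n,m}$ whenever $n$ and $m$ have opposite parity. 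The surviving cases, namely off-diagonal vanishing for $n\ne m$ of equal parity and the closed evaluation of $I_{n,n}$, reduce to a multiple $q$-sum that must be collapsed to the symmetric polynomial $w_n(0,r_1,r_2,q)$ of (\ref{wm}). I expect this $q$-series identity to be the main obstacle, and I would relegate its verification, most naturally via a generating-function argument in $r_1,r_2$ or by recognizing an Al-Salam--Chihara structure through Proposition \ref{special}(vi) together with the Chapman--Kolmogorov relation (\ref{ChK}), to the auxiliary section. Finally, convergence in the required senses is essentially automatic: the explicit form of $\phi_n$ gives a geometric bound $|\phi_n|\le C\rho^n$ for some $\rho<1$, so the Lancaster condition (\ref{ms}) and the Rademacher--Menshov hypotheses $\sum_n\phi_n^2<\infty$ and $\sum_n\phi_n^2\log^2(n+1)<\infty$ hold, yielding both the $L^2$ identity and almost-everywhere convergence.
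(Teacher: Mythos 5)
Your proposal is sound and reaches the correct target, but it is organized quite differently from the paper's argument. The paper does not compute double Fourier coefficients at all: it starts from the one--variable expansion (\ref{exp1}) of Theorem \ref{exp2} (taken from an earlier paper), in which the kernel is written as $f_N(x|q)f_R(y|r_1r_2,q)\sum_n H_n(x|q)D_n(y|r_1,r_2,q)/[n]_q!$ with $D_n$ given by (\ref{Dn}); it then re-expands each $D_n$ in the basis $\{R_k\}$ (Proposition \ref{rozw}, producing coefficients $\gamma_{n,u}$), and finally identifies $\gamma_{n,u}=\phi_{n-2u}$ by an induction that exploits the $x\leftrightarrow y$ symmetry of the kernel (Lemma \ref{final}) rather than by evaluating the multiple $q$-sum in closed form. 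You instead expand both $f_{CN}$ factors by the Poisson--Mehler formula and compute $I_{n,m}=\sum_{k,l}r_1^kr_2^lA^{(n)}_{k,l}A^{(m)}_{k,l}/([k]_q![l]_q!)$ directly; your claimed values are right --- I checked that $I_{n,n}=[n]_q!\,w_n(0,r_1,r_2,q)/(1-r_1r_2q^n)$ is exactly what (\ref{LER}) together with the norm (\ref{intR^2}) and the definition (\ref{fi}) forces --- and your marginal computation agrees with (\ref{mar}). What your route buys is symmetry and self-containment (no appeal to the asymmetric expansion of \cite{Szab6}); what it costs is that the deferred identity is now a genuinely four-fold $q$-sum (over $k,l$ and the two linearization indices) whose diagonality in $(n,m)$ and collapse to $w_n(0,r_1,r_2,q)$ must be proved from scratch, whereas the paper's symmetry-plus-induction trick in Lemma \ref{final} sidesteps a direct closed-form evaluation. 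So be aware that the step you relegate to the auxiliary section is the entire difficulty of the theorem and is not obviously easier than the paper's version of it; your suggested tools (generating functions in $r_1,r_2$, the $q$-Gauss sum) are indeed the ones the paper uses in Lemmas \ref{Wm} and \ref{spec1}. Two small points to tighten: the Parseval step needs the a priori observation that $g/(f_Rf_R)$ is bounded on $S(q)\times S(q)$ (the factors $\hat w_q$ are bounded below by $(1-|\rho|)^4$), so that the coefficient matching is legitimate; and $L^2$ matching only gives equality a.e., so pointwise nonnegativity on all of $S(q)\times S(q)$ should be finished by continuity of both sides.
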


\begin{remark}
One of the referees reading this paper raised the question of convergence in
(\ref{LER}). Theorem \ref{exp2}, presented below, states that it is almost
uniform on $S(x)\times S(q)$. The main difficulty in proving Theorem
\ref{glowny} lies not in the convergence problems but in the transformation of
(\ref{exp1}) to (\ref{LER}). It is done by the series of operations like
changing the order of summation, introducing new variables and using
nontrivial identities, in other words, a very tedious, hard algebra.
\end{remark}

\begin{remark}
Recall that polynomials $R_{n}$ are closely connected with the classical
$q$-ultraspherical polynomial by the formula (\ref{CnaR}). So far two
successful summations of bivariate kernels built of $q$-ultraspherical
polynomials. In \cite{GasRah} and later generalized in \cite{Rahman97}
(formulae 1.7,1.8) the sum has a form (adopted to our case\}: $\sum_{n\geq
0}h_{n}C_{n}(x|r_{1}r_{2},q)C_{n}(y|r_{1}r_{2},q)t^{n}$ with a normalizing
sequence $\left\{  h_{n}\right\}  $. But by no means one can find such $t$ so
that $h_{n}t^{n}\allowbreak=\allowbreak\phi_{n}(r_{1},r_{2},q)$. The other,
independent summation of bivariate kernel related to $q$-ultraspherical
polynomials was done in \cite{Koelink99} (Theorem 3.3). Again, to adopt it to
the situation considered in Theorem \ref{glowny} we should take $c\allowbreak
=\allowbreak d\allowbreak=\allowbreak c^{\prime}\allowbreak=\allowbreak
d^{\prime}\allowbreak=\allowbreak0$ and $a\allowbreak=\allowbreak r_{1}%
r_{2}e^{i\theta}$, $b\allowbreak=\allowbreak r_{1}r_{2}e^{-i\theta}$
$a^{\prime}\allowbreak=\allowbreak r_{1}r_{2}e^{i\varphi}$ and $b^{\prime
}\allowbreak=\allowbreak r_{1}r_{2}e^{-i\varphi}$ with $x\allowbreak
=\allowbreak\cos\theta$ and $y\allowbreak=\allowbreak\cos\varphi$. This is so
since we have the assertion vi) of Proposition \ref{special}. But then again
since in this case sequence $\left\{  H_{n}\right\}  $ reduces to
$C(x,y)/\left(  q,r_{1}r_{2}\right)  _{n}$ and we cannot find $t$ such that
$t^{n}/\left(  q,r_{1}r_{2}\right)  _{n}\allowbreak=\allowbreak\phi_{n}%
(r_{1},r_{2},q)$ for all $n$.

Hence, we deduce that our result is aside known results and is completely new.
\end{remark}

\begin{remark}
Now we can apply our results and complement the results of \cite{Szab19}. Let
us recall, that in this paper the following $3-$dimensional distribution
\[
f_{3D}(x,y,z|\rho_{12},\rho_{13},\rho_{23},q)=(1-r)f_{CN}(x|y,\rho
_{12},q)f_{CN}(y|z,\rho_{23},q)f_{CN}(z|x,\rho_{13},q),
\]
where we denoted $r\allowbreak=\allowbreak\rho_{12}\rho_{23}\rho_{13}$, has
the property that all its conditional moments are the polynomials in the
conditioning random variable(s). Hence, it is a compactly supported
generalization of the $3$-dimensional Normal distribution. Let us recall also,
that the one-dimensional marginals are the same and equal to $f_{R}(.|r,q)$.
Moreover, the two-dimensional marginal distribution of $(Y,Z)$ is equal to
$(1-r)f_{CN}(y|z,\rho_{23},q)f_{CN}(z|y,\rho_{12}\rho_{13},q)$ and similarly
for the other two two-dimensional marginals. Consequently, we can now expand
it in the following way:
\begin{align}
&  (1-r)f_{CN}(y|z,\rho_{23},q)f_{CN}(z|y,\rho_{12}\rho_{13},q)\label{LE_R}\\
&  =f_{R}(z|r,q)f_{R}(y|r,q)\sum_{n\geq0}\phi_{n}(\rho_{23},\rho_{13}\rho
_{12},q)\hat{R}_{n}(z|r,q)\hat{R}_{n}(y|r,q).\nonumber
\end{align}
As a result we can simplify the formula for the conditional moment $E(.|Z)$.
Namely, we have%
\begin{equation}
E(\hat{R}_{n}(Y|r,q)|Z)\allowbreak=\allowbreak\phi_{n}(\rho_{23},\rho_{13}%
\rho_{12},q)\hat{R}_{n}(Z|r,q), \label{ER}%
\end{equation}
as it follows from our main result.

Now recall that all marginal distributions are the same and have a density
$f_{R}$. Further, recall that all conditional moments are polynomials in the
conditioning random variable of order not exceeding the order of the moment.
Hence, we could deduce from the main result of \cite{Szabl21} that there
should exist a LE of the joint two-dimensional marginal involving polynomials
$R_{n}$ as the ones being orthogonal with respect to the one-dimensional
marginal distribution. So (\ref{LE_R}) presents this lacking LE of the
two-dimensional distribution.
\end{remark}

\begin{remark}
As another application of our result, one could think of constructing a
stationary Markov process with marginals $f_{R}$ and transitional density
given by (\ref{LE_R}). It would be the first application of $q$-ultraspherical
polynomials in the theory of Stochastic processes.
\end{remark}

\begin{remark}
Finally, we have the following two theoretical results related one to another.
Namely, true is the following expansion
\begin{align*}
&  f_{R}(x|r_{1}r_{2},q)f_{R}(y|r_{1}r_{2},q)\sum_{n\geq0}\phi_{n}(r_{1}%
,r_{2},q)\hat{R}_{n}(x|r_{1}r_{2},q)\hat{R}_{n}(y|r_{1}r_{2},q)\\
&  =(1-r_{1}r_{2})f_{N}(x|q)f_{N}(y|q)\sum_{n,m\geq0}\frac{r_{1}^{n}r_{2}^{m}%
}{\left[  n\right]  _{q}!\left[  m\right]  _{q}!}H_{n}(x|q)H_{m}%
(x|q)H_{n}(y|q)H_{m}(y|q),
\end{align*}
where $H_{n}$ are the $q$-Hermite polynomials defined by the three-term
recurrence (\ref{3trH}) and $f_{N}$ is the $q$-Normal density defined by
(\ref{fN}). The convergence, as it follows from Rademacher-Menshov Theorem is
for almost all $x$ and $y$ from $S(q)$, provided of course if $\left\vert
r_{1}\right\vert ,\left\vert r_{2}\right\vert <1$.

Further, after cancelling out $f_{N}(x|q)f_{N}(y|q)$ on both its sides and
using definition of $f_{CN}$ given by (\ref{fCN}), the Poisson-Mehler formula
(\ref{PMf}), expansion (\ref{exp13}) with $m\allowbreak=\allowbreak0$ and of
course (\ref{LERO}). As far as the convergence is concerned we apply the
well-known Rademacher-Menshov Theorem and use the fact that product measure
$f_{N}(x|q)\times f_{N}(y|q)$ is absolutely continuous with respect to
Lebesgue measure on $S(q)\times S(q)$,%
\begin{gather*}
(1-r_{1}r_{2})\left(  \sum_{j\geq0}\frac{r_{1}^{j}}{\left[  j\right]  _{q}%
!}H_{j}(x|q)H_{j}(y|q)\right)  \left(  \sum_{j\geq0}\frac{r_{2}^{j}}{\left[
j\right]  _{q}!}H_{j}(x|q)H_{j}(y|q)\right)  =\left(  \sum_{k\geq0}%
\frac{(r_{1}r_{2})^{k}}{\left[  k\right]  _{q}!(r_{1}^{2}r_{2}^{2})_{k}}%
H_{2k}(x|q)\right) \\
\times\left(  \sum_{k\geq0}\frac{(r_{1}r_{2})^{k}}{\left[  k\right]
_{q}!(r_{1}^{2}r_{2}^{2})_{k}}H_{2k}(y|q)\right)  \sum_{n\geq0}\frac{\phi
_{n}(r_{1},r_{2},q)(1-r_{1}r_{2}q^{n})}{\left[  n\right]  _{q}!(r_{1}^{2}%
r_{2}^{2})_{n}(1-r_{1}r_{2})}R_{n}(x|r_{1}r_{2},q)R_{n}(y|r_{1}r_{2},q),
\end{gather*}
for all $x,y\in S(q)$, $\left\vert r_{1}\right\vert ,\left\vert r_{2}%
\right\vert ,\left\vert q\right\vert <1$. The convergence is almost everywhere
on $S\left(  q\right)  \times S\left(  q\right)  $ with respect to the product
Lebesgue measure.
\end{remark}

\begin{corollary}
i) Setting $r_{1}\allowbreak=\allowbreak\rho$ and $r_{2}\allowbreak
=\allowbreak0$ we have the following expansion which is true for all $x,y\in
S(q),\left\vert \rho\right\vert <1$, $-1<q\leq1$.
\begin{equation}
f_{N}(x|q)f_{N}(y|q)\sum_{n\geq0}\frac{\rho^{n}}{\left[  n\right]  _{q}!}%
H_{n}(x|q)H_{n}(y|q)=f_{N}(y|q)f_{CN}(x|y,\rho,q). \label{PMf}%
\end{equation}
In other words, the well-known Poisson-Mehler expansion formula is a
particular case of (\ref{LER}).

ii) Let us set $r_{1}\allowbreak=\allowbreak r\allowbreak=\allowbreak-r_{2}$.
We have then
\[
\phi_{n}(r,-r,q)=\left\{
\begin{array}
[c]{ccc}%
0 & \text{if} & n\text{ is odd}\\
r^{2k}\left(  q|q^{2}\right)  _{k}/\left(  r^{4}q|q^{2}\right)  _{k} &
\text{if} & n=2k
\end{array}
\right.  ,
\]
and consequently we get%
\begin{gather*}
(1+r^{2})f_{CN}\left(  y|x,r,q\right)  f_{CN}\left(  x|y,-r,q\right)  =\\
f_{R}(x|-r^{2},q)f_{R}(y|-r^{2},q)\sum_{k\geq0}r^{2k}\left(  q|q^{2}\right)
_{k}/\left(  r^{4}q|q^{2}\right)  _{k}\hat{R}_{2k}(x|-r^{2},q)\hat{R}%
_{2k}(y|-r^{2},q).
\end{gather*}

iii) Taking $q\allowbreak=\allowbreak1$, we get:
\begin{gather*}
\frac{1+r_{1}r_{2}}{2\pi(1-r_{1}r_{2})}\exp(-\frac{(1-r_{1}r_{2})x^{2}%
}{2(1+r_{1}r_{2})}-\frac{(1-r_{1}r_{2})y^{2}}{2(1+r_{1}r_{2})})\times\\
\sum_{n\geq0}\left(  \frac{r_{1}+r_{2}}{\left(  1+r_{1}r_{2}\right)
(1-r_{1}^{2}r_{2}^{2})}\right)  ^{n}\left(  \frac{1+r_{1}r_{2}}{1-r_{1}r_{2}%
}\right)  ^{n}H_{n}\left(  x\sqrt{\frac{1-r_{1}r_{2}}{1+r_{1}r_{2}}}\right)
H_{n}\left(  y\sqrt{\frac{1-r_{1}r_{2}}{1+r_{1}r_{2}}}\right) \\
=(1-r_{1}r_{2})\exp\left(  -\frac{\left(  x-r_{1}y\right)  ^{2}}{2\left(
1-r_{1}\right)  ^{2}}--\frac{\left(  y-r_{2}x\right)  ^{2}}{2\left(
1-r_{2}\right)  ^{2}}\right)  /(2\pi(1-r_{1})(1-r_{2})).
\end{gather*}

\end{corollary}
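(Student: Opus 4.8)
The plan is to derive all three parts as specializations of the master identity (\ref{LER})/(\ref{LERO}) of Theorem \ref{glowny}; once the parameters are fixed, the only substantive work is to evaluate the symmetric function $\phi_n$ from (\ref{wm})--(\ref{fi}) and to replace $R_n$, $f_R$, $f_{CN}$ by their degenerate forms coming from Proposition \ref{special} and the defining relations (\ref{fCN}), (\ref{fCN1}), (\ref{fRR}). For (i) I would set $r_1=\rho$, $r_2=0$, so $\beta=r_1r_2=0$: in (\ref{wm}) the factor $r_2^{n-s}$ forces $s=n$, leaving $w_n(0,\rho,0,q)=\rho^n$ and $(r_1^2r_2^2)_n=(0)_n=1$, hence $\phi_n(\rho,0,q)=\rho^n$. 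Since $R_n(x|0,q)=H_n(x|q)$ by Proposition \ref{special}(i) and the infinite products in (\ref{fCN}), (\ref{fR}) collapse to $f_R(x|0,q)=f_N(x|q)$ and $f_{CN}(x|y,0,q)=f_N(x|q)$, the right-hand side of (\ref{LER}) becomes $f_N(x|q)f_{CN}(y|x,\rho,q)$ while the left-hand side becomes $f_N(x|q)f_N(y|q)\sum_n\rho^nH_n(x|q)H_n(y|q)/[n]_q!$; the symmetry of $W(x,y|\rho,q)$ in its first two arguments turns $f_N(x|q)f_{CN}(y|x,\rho,q)$ into $f_N(y|q)f_{CN}(x|y,\rho,q)$, which is (\ref{PMf}).

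For (ii) I would put $r_1=r=-r_2$, so $\beta=-r^2$ and $1-r_1r_2=1+r^2$. Using $r^s(-r)^{n-s}=(-1)^{n-s}r^n$ in (\ref{wm}) gives $w_n(0,r,-r,q)=r^n\sum_{s=0}^n(-1)^{n-s}\genfrac{[}{]}{0pt}{}{n}{s}_{q}(r^2)_s(r^2)_{n-s}$. Under $s\mapsto n-s$ the summand is multiplied by $(-1)^n$, so for odd $n$ it is antisymmetric and the sum vanishes, giving $\phi_n(r,-r,q)=0$. For $n=2k$ the surviving terminating sum is a $q$-Vandermonde-type evaluation; carrying it out and dividing by $(r^4)_{2k}$ gives $\phi_{2k}(r,-r,q)=r^{2k}(q|q^2)_k/(r^4q|q^2)_k$ (directly verifiable for $k=0,1$). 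Substituting into (\ref{LERO}) and retaining only the even indices yields the stated formula.

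For (iii) I would pass to $q=1$. By (\ref{q1}) the $q$-binomial and $q$-Pochhammer symbols become ordinary ones, so the binomial theorem gives $w_n(0,r_1,r_2,1)=(r_1(1-r_2^2)+r_2(1-r_1^2))^n=((r_1+r_2)(1-r_1r_2))^n$, while $(r_1^2r_2^2|1)_n=(1-r_1^2r_2^2)^n$, whence $\phi_n(r_1,r_2,1)=((r_1+r_2)/(1+r_1r_2))^n$. Proposition \ref{special}(iv) expresses $R_n(\cdot|\beta,1)$ through $H_n(\sqrt{(1-\beta)/(1+\beta)}\,\cdot)$ with $\beta=r_1r_2$; from $R_1(x|\beta,q)=(1-\beta)x$ and (\ref{intR^2}) the density $f_R(\cdot|\beta,1)$ is the centered Gaussian of variance $(1+\beta)/(1-\beta)$; and (\ref{fCN1}) turns the two $f_{CN}$ factors into bivariate-normal conditionals. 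Collecting these elementary constants in (\ref{LER}) reduces the identity to a rescaled classical Mehler formula, which is the displayed Gaussian expansion.

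The main obstacle is the closed-form evaluation of $\phi_{2k}(r,-r,q)$ in (ii): the vanishing of the odd-index terms is immediate from the $s\mapsto n-s$ antisymmetry, but identifying the even-index terms with $r^{2k}(q|q^2)_k/(r^4q|q^2)_k$ requires a genuine terminating $q$-series summation. Everything else is bookkeeping---invoking the symmetry of $W$ in (i), and tracking the Gaussian normalizing constants and the scaling $\sqrt{(1-\beta)/(1+\beta)}$ in (iii).
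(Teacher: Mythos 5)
Your proposal follows essentially the same route as the paper: specialize Theorem \ref{glowny}, evaluate $\phi_{n}$ in each degenerate case (via the $s=n$ collapse of $w_{n}$ when $r_{2}=0$, the $s\mapsto n-s$ antisymmetry when $r_{2}=-r_{1}$, and the ordinary binomial theorem at $q=1$), and then invoke Proposition \ref{special} together with the degenerations of $f_{R}$ and $f_{CN}$; the symmetry step $f_{N}(x|q)f_{CN}(y|x,\rho,q)=f_{N}(y|q)f_{CN}(x|y,\rho,q)$ that you make explicit is exactly what the paper uses implicitly via Remark \ref{sym}. The one step you leave unexecuted is the terminating $q$-summation yielding $\phi_{2k}(r,-r,q)=r^{2k}\left(q|q^{2}\right)_{k}/\left(r^{4}q|q^{2}\right)_{k}$, but the paper's own proof is no more detailed there (its displayed intermediate expression does not even match the stated answer), and your verification at $k=0,1$ agrees with the statement.
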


\begin{proof}
i) We use the fact that $R_{n}(x|0,q)\allowbreak=\allowbreak H_{n}(x|q)$ and
$f_{N}(x|0,q)\allowbreak=\allowbreak f_{N}(x|q)$ and also that $\sum_{j=0}^{n}%
\genfrac{[}{]}{0pt}{}{n}{j}%
_{q}r_{1}^{j}\left(  r_{2}^{2}\right)  _{j}r_{2}^{n-j}\left(  r_{1}%
^{2}\right)  _{n-j}$ $\allowbreak=\allowbreak r_{1}^{n}$ when $r_{2}%
\allowbreak=\allowbreak0$.

ii) We have%
\begin{gather*}
\phi_{n}(r,-r,q)=\frac{1}{\left(  -r^{2}\right)  _{n}}r^{n}\sum_{j=0}%
^{n}(-1)^{j}(r^{2})_{j}\left(  r^{2}\right)  _{n-j}\\
=\left\{
\begin{array}
[c]{ccc}%
0 & \text{if} & n\text{ is odd}\\
r^{2k}\left(  q|q^{2}\right)  _{k}\left(  r^{2}|q^{2}\right)  _{k}/\left(
-r^{2}\right)  _{2k} & \text{if} & n=2k
\end{array}
\right.  .
\end{gather*}

iii) We use (\ref{fCN1}) with $x\allowbreak=\allowbreak y$, Proposition
(\ref{special}), the fact that in this case $\phi_{n}(r_{1},r_{2}%
,1)\allowbreak=\allowbreak(\frac{r_{1}+r_{2}}{1+r_{1}r_{2}})^{n}$ and the fact
that polynomials $H_{n}(\sqrt{a}x)$ are orthogonal with respect to the measure
with the density$\exp(-\alpha x^{2}/2)/\sqrt{2\pi\alpha}$.
\end{proof}

Before we present a complicated proof of this theorem, let us formulate and
prove some auxiliary results.

\begin{theorem}
\label{exp2}For all $x,y\in S(q)$, $r_{1},r_{2}\in(-1,1)$ and $-1<q\leq1$ we
have
\begin{align}
0  &  \leq(1-r_{1}r_{2})f_{CN}\left(  y|x,r_{1},q\right)  f_{CN}\left(
x|y,r_{2},q\right) \label{exp1}\\
&  =f_{N}(x|q)f_{R}(y|r_{1}r_{2},q)\sum_{n\geq0}\frac{1}{\left[  n\right]
_{q}!}H_{n}(x|q)D_{n}(y|r_{1},r_{2},q),\nonumber
\end{align}
where
\begin{equation}
D_{n}(y|r_{1},r_{2},q)=\sum_{j=0}^{n}%
\genfrac{[}{]}{0pt}{}{n}{j}%
_{q}r_{1}^{n-j}r_{2}^{j}\left(  r_{1}\right)  _{j}H_{n-j}(y|q)R_{j}%
(y|r_{1}r_{2},q)/\left(  r_{1}^{2}r_{2}^{2}\right)  _{j}. \label{Dn}%
\end{equation}
and the convergence is absolute and almost uniform on $S(q)\times S(q)$. We
also have%
\begin{equation}
\int_{S(q)}(1-r_{1}r_{2})f_{CN}\left(  y|x,r_{1},q\right)  f_{CN}\left(
x|y,r_{2},q\right)  dx=f_{R}(y|r_{1}r_{2},q). \label{mar}%
\end{equation}

\end{theorem}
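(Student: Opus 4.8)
The nonnegativity in (\ref{exp1}) is immediate: each $f_{CN}$ is a probability density, hence nonnegative, and $1-r_1r_2>0$ whenever $|r_1|,|r_2|<1$, so the displayed quantity is a nonnegative multiple of a product of densities. For the expansion my plan is to reduce everything to a single ``master'' double $q$-Hermite series. Writing $G(x,y)$ for the left-hand side of (\ref{exp1}) and applying the Poisson--Mehler formula (\ref{PMf}) to each factor, i.e. $f_{CN}(y|x,r_1,q)=f_N(y|q)\sum_{j\ge0}\frac{r_1^j}{[j]_q!}H_j(x|q)H_j(y|q)$ and likewise for $f_{CN}(x|y,r_2,q)$, turns $G$ into $(1-r_1r_2)f_N(x|q)f_N(y|q)\sum_{n,m\ge0}\frac{r_1^n r_2^m}{[n]_q![m]_q!}H_n(x|q)H_m(x|q)H_n(y|q)H_m(y|q)$, which is exactly the double $q$-Hermite series highlighted in the remarks following Theorem~\ref{glowny}.

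To recognize this as the claimed series I would extract the Fourier coefficient of $H_n(x|q)$ relative to the orthogonality measure $f_N(x|q)\,dx$. By (\ref{anH}) it is equivalent to prove $\int_{S(q)}G(x,y)H_n(x|q)\,dx=f_R(y|r_1r_2,q)\,D_n(y|r_1,r_2,q)$. Expanding only the factor $f_{CN}(y|x,r_1,q)$ by Poisson--Mehler, then using the symmetry $f_N(x|q)f_{CN}(y|x,\rho,q)=f_N(y|q)f_{CN}(x|y,\rho,q)$ (itself a consequence of (\ref{PMf})), the conditional-moment identity $\int_{S(q)}H_\ell(x|q)f_{CN}(x|y,\rho,q)\,dx=\rho^\ell H_\ell(y|q)$, and the $q$-Hermite linearization $H_nH_j=\sum_k\genfrac{[}{]}{0pt}{}{n}{k}_q\genfrac{[}{]}{0pt}{}{j}{k}_q[k]_q!H_{n+j-2k}$, the integral becomes an explicit double $q$-sum in the products $H_j(y|q)H_{n+j-2k}(y|q)$.

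The crux, and the step I expect to be the main obstacle, is to resum this double $q$-sum into the closed form $f_R(y|r_1r_2,q)D_n(y|r_1,r_2,q)$. Factoring out $(1-r_1r_2)f_N(y|q)$ and using the representation $f_R(y|\beta,q)=(1-\beta)f_N(y|q)\sum_{\ell\ge0}\frac{\beta^\ell H_\ell^2(y|q)}{[\ell]_q!}$ from (\ref{fRR}) with $\beta=r_1r_2$, the required equality reduces to an identity among $q$-Hermite polynomials in $y$, equivalently to converting the double product form into the $H_{n-j}(y|q)R_j(y|r_1r_2,q)$ form defining $D_n$. This is where the tedious $q$-series algebra concentrates; I would carry it out by invoking the $H$--$R$ connection coefficients (\ref{RnaH})--(\ref{HnaR}) and the linearization formula (\ref{HRnaH}), together with the auxiliary identity proved in Section~\ref{aux1}, and I would isolate it as a separate lemma so the bookkeeping does not obscure the main line.

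The marginal identity (\ref{mar}) then falls out of (\ref{exp1}) by integrating in $x$: since $\int_{S(q)}H_n(x|q)f_N(x|q)\,dx=\delta_{n,0}$ by (\ref{anH}), only the $n=0$ term survives, and $D_0\equiv1$, leaving $\int_{S(q)}G(x,y)\,dx=f_R(y|r_1r_2,q)$; alternatively one reads it off directly from Poisson--Mehler and the conditional-moment identity, which give $\int G\,dx=(1-r_1r_2)f_N(y|q)\sum_\ell\frac{(r_1r_2)^\ell}{[\ell]_q!}H_\ell^2(y|q)$, equal to $f_R(y|r_1r_2,q)$ by (\ref{fRR}). Finally, for the convergence claim I would build a geometric majorant: from (\ref{fRR}) one gets, on each compact $K\subset\operatorname{int}S(q)$ and each $s\in(0,1)$, a bound $|H_n(x|q)|\le C_K(s)\sqrt{[n]_q!}\,s^{-n/2}$, and inserting this four times into the master double sum bounds its general term on $K\times K$ by $C_K(s)^2(|r_1|/s)^n(|r_2|/s)^m$, which is summable once $|r_1|,|r_2|<s<1$. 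This gives absolute and uniform convergence on compact subsets of $\operatorname{int}S(q)\times\operatorname{int}S(q)$; combined with the finite total measure of $S(q)\times S(q)$ for $|q|<1$ via Egorov's theorem, or the Rademacher--Menshov argument recalled in Section~\ref{Notation}, it yields the asserted almost uniform convergence.
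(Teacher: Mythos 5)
Your strategy is genuinely different from the paper's, and most of its components are fine: the nonnegativity remark is trivially correct, both of your derivations of (\ref{mar}) work (the second one, via Poisson--Mehler, the identity $\int_{S(q)}H_\ell(x|q)f_{CN}(x|y,\rho,q)\,dx=\rho^\ell H_\ell(y|q)$ and (\ref{fRR}), is exactly right), the reduction of (\ref{exp1}) to the family of identities $\int_{S(q)}G(x,y)H_n(x|q)\,dx=f_R(y|r_1r_2,q)\,D_n(y|r_1,r_2,q)$ is sound, and the convergence argument is essentially adequate (in fact for $|q|<1$ one has the uniform bound $|H_n(x|q)|\leq C(q)(n+1)\sqrt{[n]_q!}$ on all of $S(q)$, so the master double series converges absolutely and uniformly there). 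For comparison: the paper does not derive the expansion at all; it quotes the three-variable identity (\ref{expfc2n}) from Thm.~3(3.4) of \cite{Szab6}, in which the coefficient functions $C_n(y,z|\rho_1,\rho_2,q)$ already have the desired shape, and merely specializes $z=y$ using $P_s(y|y,\beta,q)=R_s(y|\beta,q)$ and $(1-r)f_{CN}(y|y,r,q)=f_R(y|r,q)$.

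The genuine gap is that the single nontrivial assertion of the theorem --- that the Fourier coefficient resums to $f_R(y|r_1r_2,q)$ times the \emph{specific} expression (\ref{Dn}) --- is announced but never carried out. You identify it as ``the crux'' and defer it to an unnamed separate lemma, citing (\ref{RnaH})--(\ref{HnaR}), (\ref{HRnaH}) and ``the auxiliary identity proved in Section \ref{aux1}'' as the intended tools; but those lemmas are tailored to Proposition \ref{rozw} and Lemma \ref{final}, and nothing in your text shows they close your particular double sum. As written you have only established that $G$ admits \emph{some} expansion $f_N(x|q)\sum_n\frac{1}{[n]_q!}H_n(x|q)\tilde D_n(y)$, not that $\tilde D_n=f_R\cdot D_n$ with $D_n$ given by (\ref{Dn}). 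The step can in fact be completed along your lines, but with a different toolkit than the one you name: expand both $H_j(x)$ and $H_n(x)$ by (\ref{HnaP}) with $\rho=r_2$ and integrate using (\ref{PnPm}) to get $\int H_jH_nf_{CN}(\cdot|y,r_2,q)\,dx=\sum_a\genfrac{[}{]}{0pt}{}{j}{a}_q\genfrac{[}{]}{0pt}{}{n}{a}_q[a]_q!\,(r_2^2)_a\,r_2^{j+n-2a}H_{j-a}(y)H_{n-a}(y)$; then interchange the sums over $j$ and $a$ and recognize the inner $j$-sum as precisely (\ref{exp11}) of Proposition \ref{aux3} with $r=r_1r_2$ and $m=a$. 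This produces $\sum_a\genfrac{[}{]}{0pt}{}{n}{a}_q r_2^{n-a}r_1^{a}(r_2^2)_aH_{n-a}(y)R_a(y|r_1r_2,q)/(r_1^2r_2^2)_a$, which is (\ref{Dn}) with $r_1$ and $r_2$ interchanged and with $(r_1)_j$ read as $(r_1^2)_j$ (as the paper itself does at the start of the proof of Proposition \ref{rozw}); the $r_1\leftrightarrow r_2$ symmetry of $G$ noted in Remark \ref{sym} reconciles the two forms. Until some such computation is written down, the proposal does not prove the theorem.
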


\begin{proof}
This Theorem is composed of the results that appeared in \cite{Szab6} and,
e.g., \cite{Szab19}. Namely, in \cite{Szab6} the following result was proved
(Thm.3 (3.4))%
\begin{align}
&  \allowbreak\frac{f_{CN}\left(  z|x,\rho_{2},q\right)  f_{CN}\left(
x|y,\rho_{1},q\right)  f_{N}\left(  y|q\right)  }{f_{CN}\left(  z|y,\rho
_{1}\rho_{2},q\right)  f_{N}\left(  y|q\right)  }\label{expfc2n}\\
&  =f_{N}\left(  x|q\right)  \sum_{j=0}^{\infty}\frac{1}{\left[  j\right]
_{q}!}H_{j}\left(  x|q\right)  C_{j}\left(  y,z|\rho_{1},\rho_{2},q\right)
,\nonumber
\end{align}
where
\[
C_{n}\left(  y,z|\rho_{1},\rho_{2},q\right)  =\sum_{s=0}^{n}\allowbreak%
\genfrac{[}{]}{0pt}{}{n}{s}%
_{q}\rho_{1}^{n-s}\rho_{2}^{s}\left(  \rho_{1}^{2}\right)  _{s}H_{n-s}\left(
y|q\right)  P_{s}\left(  z|y,\rho_{1}\rho_{2},q\right)  /(\rho_{1}^{2}\rho
_{2}^{2})_{s}.
\]
convergence in (\ref{expfc2n}) is absolute and almost uniform on $S(q)\times
S(x)\times S(q)$. Now in \cite{Szab19} has been noticed (Proposition 1(3.1))
that $(1-r)f_{CN}(y|y,r,q)\allowbreak=\allowbreak f_{R}(y|r,q)$ where $f_{R}$
is the density of the measure that makes polynomials $\left\{  R_{n}\right\}
$ orthogonal. Thus, by replacing $\rho_{1}$ and $\rho_{2}$ by $r_{1}$ and
$r_{2}$ and identifying $y$ and $z$ we get (\ref{exp1}). Let us denote now
\[
D_{n}(y|\rho_{1},\rho_{2},q)=C_{n}(y,y|\rho_{1},\rho_{2},q),
\]

\end{proof}

\begin{remark}
\label{sym}$(1-r_{1}r_{2})f_{CN}\left(  y|x,r_{1},q\right)  f_{CN}\left(
x|y,r_{2},q\right)  $ is a symmetric function with respect both $x$ and $y$ as
well as $r_{1}$ and $r_{2}$.

To see this, let us refer to the definition of the density $f_{CN}$. Namely,
we have%
\begin{align*}
&  (1-r_{1}r_{2})f_{CN}\left(  y|x,r_{1},q\right)  f_{CN}\left(
x|y,r_{2},q\right) \\
&  =(1-r_{1}r_{2})\left(  r_{1}^{2},r_{2}^{2}\right)  _{\infty}f_{N}%
(x|q)f_{N}(y|q)/(W(x,y|r_{1},q)W(x,y|r_{2},q))
\end{align*}
where $W(x,y|r_{1},q)$ is defined by (\ref{WW}).
\end{remark}

The next several partial results require very special and tedious calculations
that can be interested only for those who are working on $q$-series theory. To
preserve the logic of the arguments leading to the result, we will move all
such auxiliary results to the last section.

Our following result presents the functions $\left\{  D_{n}\right\}  $ given
by (\ref{Dn}) as a combination of only polynomials $\left\{  R_{n}\right\}  $
or in other words expands each $D_{n}$ in the basis of $\left\{
R_{n}\right\}  $. The proof is simple ideologically, but very hard in terms of
specialized calculations, which is why we will shift it to the last section.

\begin{proposition}
\label{rozw}We have for all $n\geq0$, $r_{1},r_{2}\in(-1,1)$ and $-1<q\leq1$:%
\begin{equation}
D_{n}(y|r_{1},r_{2},q)=\sum_{u=0}^{\left\lfloor n/2\right\rfloor }%
\frac{\left[  n\right]  _{q}!(1-r_{1}r_{2}q^{n-2u})(r_{1}r_{2})^{u}}{\left[
u\right]  _{q}!\left[  n-2u\right]  _{q}!\left(  r_{1}r_{2}\right)  _{n-u+1}%
}R_{n-2u}(y|r_{1}r_{2},q)\gamma_{n,u}(r_{1},r_{2},q), \label{Dnn}%
\end{equation}
where
\begin{align}
\gamma_{n,u}(r_{1},r_{2},q)  &  =\frac{1}{(r_{1}^{2}r_{2}^{2})_{2n}}\sum
_{m=0}^{u}%
\genfrac{[}{]}{0pt}{}{u}{m}%
_{q}\left(  r_{2}^{2}\right)  _{m}\left(  r_{1}r_{2}q^{n-u-m+1}\right)
_{m}\label{fin}\\
&  \times\left(  r_{1}^{2}\right)  _{m}\left(  r_{1}r_{2}\right)  _{m}%
w_{n-2m}(m,r_{1}r_{2},q),\nonumber
\end{align}
is well defined for all $n\geq0$ and $u\leq0\leq\left\lfloor n/2\right\rfloor
$.
\end{proposition}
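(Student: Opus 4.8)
The plan is to prove Proposition \ref{rozw} by starting from the defining formula (\ref{Dn}) for $D_n(y|r_1,r_2,q)$, which expresses $D_n$ as a sum of products $H_{n-j}(y|q)R_j(y|r_1r_2,q)$, and then re-expanding each such product in the basis $\{R_k(y|r_1r_2,q)\}$. The natural tool here is the linearization formula (\ref{HRnaR}) from the Lemma, which gives $H_m(x|q)R_n(x|r,q)$ directly as a linear combination of the $R_{n+m-2u}(x|r,q)$ with explicit (double-summed) coefficients. Applied with $m=n-j$, $n=j$, and $r=r_1r_2$, this turns $D_n$ into a triple (or higher) sum over the output index of the linearization together with the connection-coefficient sums internal to (\ref{HRnaR}), and I would then collect all terms contributing to a fixed output polynomial $R_{n-2u}(y|r_1r_2,q)$.

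The key steps, in order, would be: first, substitute (\ref{HRnaR}) into (\ref{Dn}) and interchange the order of summation so that the outer sum runs over the retained degree $n-2u$; second, identify the coefficient of $R_{n-2u}$ as a multi-fold $q$-sum in $j$ and in the two internal linearization indices; third, recognize the inner structure as an instance of the symmetric functions $w_{\ell}(m,r_1,r_2,q)$ defined in (\ref{wm}) — this is where the somewhat mysterious function $\gamma_{n,u}$ of (\ref{fin}), built precisely out of the $w_{n-2m}(m,r_1r_2,q)$ together with $q$-Pochhammer factors $(r_2^2)_m$, $(r_1^2)_m$, $(r_1r_2)_m$ and the shifted factor $(r_1r_2 q^{n-u-m+1})_m$, should emerge after applying a $q$-Vandermonde or $q$-Chu–Vandermonde summation to telescope one of the indices. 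Finally, I would match the overall prefactor $\tfrac{[n]_q!(1-r_1r_2q^{n-2u})(r_1r_2)^u}{[u]_q![n-2u]_q!(r_1r_2)_{n-u+1}}$ against the normalization constants appearing in the linearization coefficients, using $\genfrac{[}{]}{0pt}{}{n}{k}_q=\tfrac{[n]_q!}{[k]_q![n-k]_q!}$ and the Pochhammer identities from (\ref{binT})--(\ref{naw}).

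The main obstacle, I expect, will be carrying out the index-juggling cleanly enough that a recognizable $q$-summation identity applies. The formula (\ref{HRnaR}) already contains two nested sums with $q$-binomials and $q^{\binom{k}{2}}(-r)^k(r)_{n-k}$ factors, and composing it with the outer $j$-sum from (\ref{Dn}) produces a sum whose summand is a product of four or five $q$-Pochhammer symbols with intertwined arguments; reorganizing these so that the $r_1$-dependence and $r_2$-dependence separate into the symmetric combination $w_{n-2m}(m,r_1r_2,q)$ is the delicate combinatorial heart of the argument. In particular, the appearance of the single argument $r_1r_2$ inside the $R_k$'s but the separate variables $r_1,r_2$ inside $w$ and $\gamma_{n,u}$ means the proof must track which factors of $r_1$ and $r_2$ come from the $H$-expansion (\ref{RnaH}) of $R_j(y|r_1r_2,q)$ versus from the original coefficient $r_1^{n-j}r_2^j(r_1)_j/(r_1^2r_2^2)_j$ in (\ref{Dn}).

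An alternative, possibly cleaner route would be to prove (\ref{Dnn}) indirectly: since both sides are polynomials in $y$ of degree $n$ orthogonal-expandable against $f_R(y|r_1r_2,q)$, it suffices to show they have the same inner products $\int_{S(q)} D_n(y|r_1,r_2,q)R_{n-2u}(y|r_1r_2,q)f_R(y|r_1r_2,q)\,dy$ for each $u$. Using the orthogonality relation (\ref{intR^2}) on the right-hand side and the known mixed integrals (\ref{HRfR}) applied termwise to the definition (\ref{Dn}) on the left-hand side would reduce the claim to a single scalar identity in $r_1,r_2,q$ for each $(n,u)$, thereby replacing one hard linearization with a computation of moments; I would attempt this second route first and fall back on the direct linearization only if the moment integrals fail to collapse into the $w$-functions. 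Either way the genuinely hard part is the same: exhibiting the coefficient as the specific symmetric sum $\gamma_{n,u}(r_1,r_2,q)$ of (\ref{fin}), and this is exactly the tedious $q$-series manipulation that the paper defers to its final section.
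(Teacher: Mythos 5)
Your primary plan is essentially the paper's own proof: the paper likewise substitutes the linearization formula (\ref{HRnaR}) into the definition (\ref{Dn}), interchanges the order of summation to isolate the coefficient of $R_{n-2u}$, collapses one inner index by a $q$-Chu--Vandermonde-type summation (the paper's Lemma \ref{skrt}, proved from the identity $\sum_{j}\genfrac{[}{]}{0pt}{}{n}{j}_{q}(-b)^{j}q^{\binom{j}{2}}(a)_{j}(abq^{j})_{n-j}=(b)_{n}$), and then recognizes the remaining sum as $w_{n-2m}(m,r_{1},r_{2},q)$ via the generating-function Lemmas \ref{Wm} and \ref{spec1}. Your alternative inner-product route is not what the paper uses for this proposition (though integration against the kernel is how it later establishes $\gamma_{n,u}=\phi_{n-2u}$ in Lemma \ref{final}), so no gap: the approach you describe in detail is the one carried out in Section \ref{aux1}.
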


\begin{proof}
The long, tedious proof is shifted to Section \ref{aux1}.
\end{proof}

Before we formulate the next Lemma, let us formulate the following simple result.

\begin{proposition}
\label{aux3}For all $x\in S(x)$, $\left\vert r\right\vert <1$, $\left\vert
q\right\vert \leq1$, $m\geq0$ we have%
\begin{gather}
(1-r)f_{N}(x|q)\sum_{j\geq0}H_{j}(x|q)H_{j+m}(x|q)r^{j}/\left[  j\right]
_{q}!=f_{R}(x|q,q)R_{m}(x|r,q)/\left(  r^{2}\right)  _{m}=\label{exp11}\\
f_{R}(x|r,q)\sum_{k=0}^{m}%
\genfrac{[}{]}{0pt}{}{m}{k}%
_{q}(-r)^{k}q^{\binom{k}{2}}H_{m-j}(x|q)R_{j}(x|r,q)/\left(  r^{2}\right)
_{j}=\label{exp12}\\
(1-r)f_{N}(x|q)\sum_{s\geq0}\frac{r^{s}}{\left[  s\right]  _{q}!(r)_{m+s+1}%
}H_{2s+m}(x|q). \label{exp13}%
\end{gather}

\end{proposition}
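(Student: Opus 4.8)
The plan is to reduce the whole chain to the already-established $m=0$ identity (\ref{fRR}), which reads $f_R(x|r,q)=(1-r)f_N(x|q)\sum_{n\ge0}r^nH_n^2(x|q)/[n]_q!$, and then to treat the three links separately. Throughout I abbreviate $g_m(x)=\sum_{j\ge0}\frac{r^j}{[j]_q!}H_j(x|q)H_{j+m}(x|q)$, so the leftmost member of the statement is $(1-r)f_N(x|q)g_m(x)$, while by (\ref{fRR}) one has $f_R(x|r,q)=(1-r)f_N(x|q)g_0(x)$ (I read the middle density in (\ref{exp11}) as $f_R(x|r,q)$, the literal $f_R(x|q,q)$ being a misprint, and the summation index in (\ref{exp12}) as $k$). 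Dividing the entire statement by $(1-r)f_N(x|q)$ then turns the first expression into $g_m$, the two middle expressions into $g_0$ times a polynomial, and the last into $\sum_{s\ge0}\frac{r^s}{[s]_q!(r)_{m+s+1}}H_{2s+m}(x|q)$, so it suffices to prove three scalar identities.

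For the equality of (\ref{exp11}) with the $R_m/(r^2)_m$ form, i.e.\ $g_m=R_m(x|r,q)g_0/(r^2)_m$, the cleanest route is a recurrence argument. First I would show, using the three-term recurrence (\ref{3trH}) applied to the factor $H_{j+m}$ together with the splitting $[j+m]_q=[m]_q+q^m[j]_q$ and one reindexing, that
\[
(1-r^2q^m)g_{m+1}(x)=(1-rq^m)x\,g_m(x)-[m]_q\,g_{m-1}(x).
\]
On the other hand, dividing the three-term recurrence (\ref{R}) (with $\beta=r$) by $(r^2)_n$ shows that $R_m(x|r,q)/(r^2)_m$ satisfies exactly this same recurrence in $m$. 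Since $g_0$ does not depend on $m$ and the initial data agree ($g_0=g_0$, and $g_1=\frac{x}{1+r}g_0=R_1(x|r,q)g_0/(r^2)_1$, checked directly from (\ref{3trH})), induction yields $g_m=R_m(x|r,q)g_0/(r^2)_m$; multiplying by $(1-r)f_N(x|q)$ gives the desired equality.

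For the equality of (\ref{exp11}) with (\ref{exp13}), I would expand the product by the $q$-Hermite linearization formula $H_jH_{j+m}=\sum_{k=0}^{j}\genfrac{[}{]}{0pt}{}{j}{k}_q\genfrac{[}{]}{0pt}{}{j+m}{k}_q[k]_q!H_{2j+m-2k}(x|q)$, insert it into $g_m$, and collect by the new index $s=j-k$, so that the surviving polynomial is $H_{2s+m}$. After cancelling $q$-factorials the coefficient of $H_{2s+m}$ collapses to $\frac{r^s}{[s]_q!}\sum_{k\ge0}\genfrac{[}{]}{0pt}{}{s+m+k}{k}_q r^k$, and the inner sum equals $1/(r)_{m+s+1}$ by the Euler/$q$-binomial identity (\ref{binT}). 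This produces precisely $\sum_{s\ge0}\frac{r^s}{[s]_q!(r)_{m+s+1}}H_{2s+m}(x|q)$.

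The remaining link (\ref{exp12}) is the step I expect to be the main obstacle. After dividing by $f_R(x|r,q)$ it is equivalent to the purely polynomial inversion identity
\[
\frac{R_m(x|r,q)}{(r^2)_m}=\sum_{k=0}^{m}\genfrac{[}{]}{0pt}{}{m}{k}_q(-r)^k q^{\binom{k}{2}}H_{m-k}(x|q)\frac{R_k(x|r,q)}{(r^2)_k}.
\]
Unlike the two previous steps this is not a bilinear $q$-Hermite sum but an expansion of $R_m$ against the mixed basis $\{H_{m-k}R_k\}$, where the coefficients $\genfrac{[}{]}{0pt}{}{m}{k}_q(-r)^kq^{\binom{k}{2}}$ are exactly those appearing in the big $q$-Hermite polynomials. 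I would prove it by induction on $m$, verifying that the right-hand side satisfies the same recurrence (\ref{R})$/(r^2)_m$ as the left-hand side; this forces one to rewrite $xH_{m-k}$ via (\ref{3trH}) and to reshuffle the $q$-binomial coefficients by the $q$-Pascal rule. Alternatively, and more computationally, one can substitute the explicit $q$-Hermite expansion (\ref{RnaH}) for every $R_k$ and for $R_m$ and match the coefficient of each $H_{m-2\ell}(x|q)$, reducing the claim to a single $q$-series summation. The bookkeeping in either variant is the genuinely laborious part; everything else follows cleanly from (\ref{3trH}), (\ref{R}), (\ref{fRR}) and (\ref{binT}).
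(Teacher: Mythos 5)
Your proposal is correct in substance, but it reaches the first two equalities by a genuinely different route than the paper. For (\ref{exp11}) and (\ref{exp12}) the paper gives no internal argument at all: it simply cites Lemma 3(i) of \cite{Szab6} (an identity for Al-Salam--Chihara polynomials) and sets $x=y$, using $R_n(x|\beta,q)=P_n(x|x,\beta,q)$. You instead prove (\ref{exp11}) from scratch: the recurrence $(1-r^2q^m)g_{m+1}=(1-rq^m)xg_m-[m]_qg_{m-1}$ for $g_m=\sum_j r^jH_jH_{j+m}/[j]_q!$ is correct (it needs (\ref{3trH}) applied twice and two reindexings, not one, but the result checks out), and $R_m/(r^2)_m$ does satisfy the same recurrence after dividing (\ref{R}) by $(r^2)_m$, with matching initial data; this yields a self-contained argument the paper does not supply. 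You also correctly diagnose the two misprints ($f_R(x|q,q)$ for $f_R(x|r,q)$, and $j$ for $k$ in (\ref{exp12})). For (\ref{exp13}) your argument --- linearization of $H_jH_{j+m}$, reindexing by $s=j-k$, and the $q$-binomial identity (\ref{binT}) --- is exactly the computation the paper performs. The one place where you stop short of a proof is (\ref{exp12}): you reduce it to the polynomial identity $R_m/(r^2)_m=\sum_{k=0}^m\genfrac{[}{]}{0pt}{}{m}{k}_q(-r)^kq^{\binom{k}{2}}H_{m-k}R_k/(r^2)_k$ and only sketch an induction; that identity is indeed true (it is the $x=y$ specialization of the $H$-to-$P$ inversion underlying (\ref{HnaP})), and your induction via the $q$-Pascal rule is a workable plan, but as written it is a program rather than a completed step. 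The trade-off is clear: the paper's citation is short but opaque, while your approach is longer but elementary and verifiable within the paper's own toolkit.
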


\begin{proof}
We start with the result of \cite{Szab6}(Lemma 3i)) where we set
$x\allowbreak=\allowbreak y$ and make other adjustments and get directly
(\ref{exp11}) and (\ref{exp12}).

To get (\ref{exp13}) we must apply the linearization formula for the
$q$-Hermite polynomials and (\ref{binT}):%
\begin{gather*}
\sum_{j\geq0}\frac{r^{j}}{\left[  j\right]  _{q}!}H_{j}(x|q)H_{j+m}(x|q)\\
=\sum_{j\geq0}\frac{r^{j}}{\left[  j\right]  _{q}!}\sum_{k=0}^{j}%
\genfrac{[}{]}{0pt}{}{j}{k}%
_{q}%
\genfrac{[}{]}{0pt}{}{j+m}{k}%
_{q}\left[  k\right]  _{q}!H_{2j+m-2k}(x|q)\\
=\sum_{j\geq0}\frac{r^{j}}{\left[  j\right]  _{q}!}\sum_{s=0}^{j}%
\genfrac{[}{]}{0pt}{}{j}{s}%
_{q}%
\genfrac{[}{]}{0pt}{}{j+m}{m+s}%
_{q}\left[  j-s\right]  _{q}!H_{2s+m}(x|q)\\
=\sum_{s\geq0}\frac{r^{s}}{\left[  s\right]  _{q}!}H_{2s+m}(x|q)\sum_{j\geq
s}r^{j-s}%
\genfrac{[}{]}{0pt}{}{j-s+(m+s)}{m+s}%
_{q}\\
=\sum_{s\geq0}\frac{r^{s}}{\left[  s\right]  _{q}!(r)_{m+s+1}}H_{2s+m}(x|q).
\end{gather*}

\end{proof}

\begin{lemma}
\label{final}We have $n\geq0:$
\begin{gather*}
\int_{S(q)}R_{n}(x|r_{1}r_{2},q)(1-r_{1}r_{2})f_{CN}\left(  y|x,r_{1}%
,q\right)  f_{CN}\left(  x|y,r_{2},q\right)  dx\\
=\phi_{n}(r_{1},r_{2},q)R_{n}(y|r_{1}r_{2},q).
\end{gather*}
and similarly for the integral with respect to $y$.

Further, for all $n\geq0$ and $k\leq\left\lfloor n/2\right\rfloor $
\[
\gamma_{n,k}(r_{1},r_{2},q)=\phi_{n-2k}(r_{1},r_{2},q).\allowbreak
\]
In particular we have
\begin{align*}
\gamma_{2k,k}(r_{1},r_{2},q)  &  =1,\\
\gamma_{n,0}(r_{1},r_{2},q)  &  =\phi_{n}(r_{1},r_{2},q).
\end{align*}

\end{lemma}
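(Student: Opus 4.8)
The plan is to read the three assertions of Lemma~\ref{final} as statements about the single conditional-expectation operator attached to the symmetric kernel $K(x,y)\overset{df}{=}(1-r_1r_2)f_{CN}(y|x,r_1,q)f_{CN}(x|y,r_2,q)$, and to prove the integral formula first and extract the $\gamma$-identity from it. By Theorem~\ref{exp2}, $K(x,y)=f_N(x|q)f_R(y|r_1r_2,q)\sum_{m\ge 0}H_m(x|q)D_m(y|r_1,r_2,q)/[m]_q!$, so integrating against the $q$-Hermite orthogonality (\ref{anH}) gives at once $\int_{S(q)}H_j(x|q)K(x,y)\,dx=f_R(y|r_1r_2,q)D_j(y|r_1,r_2,q)$. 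Expanding $R_n(x|r_1r_2,q)$ in the $q$-Hermite basis by (\ref{RnaH}) and using (\ref{HRfN}) then yields
\[
\int_{S(q)}R_n(x|r_1r_2,q)K(x,y)\,dx=f_R(y|r_1r_2,q)\sum_{k=0}^{\lfloor n/2\rfloor}c_{n,k}\,D_{n-2k}(y|r_1,r_2,q),
\]
with $c_{n,k}=\tfrac{[n]_q!}{[k]_q![n-2k]_q!}q^{\binom{k}{2}}(-r_1r_2)^k(r_1r_2)_{n-k}$; in particular the right member is the marginal $f_R(y|r_1r_2,q)$ times a polynomial of degree $\le n$ in $y$. (This makes explicit that the integral carries the weight $f_R(y|r_1r_2,q)$, in accordance with (\ref{mar}) at $n=0$.)

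Next I would diagonalise. Writing $Tg(y)\overset{df}{=}\int_{S(q)}g(x)K(x,y)\,dx/f_R(y|r_1r_2,q)$ for the conditional expectation, Remark~\ref{sym} shows $K$ is symmetric, so $T$ is self-adjoint on $L_2(f_R)$; the display above shows $TR_n$ has degree $\le n$, so the matrix of $T$ in the orthogonal basis $\{R_n\}$ is upper triangular. A self-adjoint operator triangular in an orthogonal basis is diagonal, whence $TR_n=\lambda_n R_n$ for a scalar $\lambda_n$. To evaluate $\lambda_n$ I read off the coefficient of $R_n(y|r_1r_2,q)$ on the right: only $D_n$ reaches that top degree, and by the $u=0$ term of Proposition~\ref{rozw} its $R_n$-coefficient is $\gamma_{n,0}/(r_1r_2)_n$, which against $c_{n,0}=(r_1r_2)_n$ gives $\lambda_n=\gamma_{n,0}$. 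Since the $m=0$ summand of (\ref{fin}) reproduces the definition (\ref{fi}) of $\phi_n$, we get $\gamma_{n,0}=\phi_n(r_1,r_2,q)$, hence the integral formula; the companion integral in $y$ follows from the $(x,r_1)\leftrightarrow(y,r_2)$ symmetry of $K$.

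With the integral formula in hand, $\gamma_{n,k}=\phi_{n-2k}$ falls out by comparing the two expressions for $TR_n$. Substituting the $R$-expansion of Proposition~\ref{rozw} into $\sum_k c_{n,k}D_{n-2k}(y)$ and collecting the coefficient of $R_{n-2j}(y)$ (with $j=k+u$), the equality $TR_n=\phi_n R_n$ becomes, for each $j$, one finite relation among $\gamma_{n,j},\gamma_{n-2,j-1},\dots,\gamma_{n-2j,0}$. Inducting on $n$, the sole term of first index $n$ is $\gamma_{n,j}$ (the $k=0$ summand), the rest being known; substituting the inductive value $\gamma_{n-2k,j-k}=\phi_{n-2j}$ — which is independent of $k$ and factors out — reduces the $j\ge 1$ relations to
\[
\sum_{k=0}^{j}\genfrac{[}{]}{0pt}{}{j}{k}_q q^{\binom{k}{2}}(-1)^k\frac{(r_1r_2)_{n-k}}{(r_1r_2)_{n-k-j+1}}=0 .
\]
Here $(r_1r_2)_{n-k}/(r_1r_2)_{n-k-j+1}=\prod_{i=n-k-j+1}^{n-k-1}(1-r_1r_2q^{i})$ is a polynomial of degree $j-1$ in $q^{-k}$, and by the Euler expansion (\ref{naw}) the operator $\sum_k\genfrac{[}{]}{0pt}{}{j}{k}_q q^{\binom{k}{2}}(-1)^k(\cdot)$ sends $q^{-kr}$ to $(q^{-r})_j$, which vanishes for $r=0,\dots,j-1$; it therefore annihilates every polynomial of degree $<j$ in $q^{-k}$, and the sum is zero. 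This forces $\gamma_{n,j}=\phi_{n-2j}$, and the two displayed special cases $\gamma_{n,0}=\phi_n$ and $\gamma_{2k,k}=\phi_0=1$ are then immediate.

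The main obstacle I anticipate is not any isolated identity but the rigorous bookkeeping that welds together the two different expansions — the $q$-Hermite expansion of Theorem~\ref{exp2} and the $q$-ultraspherical expansion of Proposition~\ref{rozw} — together with a clean diagonalisation step. Concretely, one must check that $T$ is genuinely Hilbert–Schmidt (condition (\ref{ms})) so that self-adjointness and the triangular-implies-diagonal argument apply, and that $\{R_n\}$ is complete in $L_2(f_R)$ — a point needing care at $q=1$, where $S(q)=\mathbb{R}$. Everything else is the controlled $q$-algebra above, the only genuinely arithmetic input being the degree count that triggers the Euler annihilation identity.
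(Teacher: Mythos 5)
Your proof is correct, and its route is genuinely different in organization from the paper's, even though the raw ingredients overlap. The paper also exploits the symmetry of $K$, but it does so by computing \emph{both} marginal integrals of the asymmetric expansion (\ref{exp1}) explicitly --- $\int K(x,y)R_m(x)\,dx$ and $\int K(x,y)R_m(y)\,dy$ --- and runs a joint induction on $s=n-2u$ in which the integral formula and the identity $\gamma_{n,u}=\phi_{n-2u}$ are established simultaneously; the $y$-integral is identified by matching against the $q$-Hermite expansion (\ref{exp13}) of Proposition \ref{aux3}, and the $x$-integral collapses via Lemma \ref{pom1}. You instead package the symmetry as self-adjointness of the conditional-expectation operator $T$ on $L_{2}(f_{R})$, observe that $TR_n$ has degree at most $n$, and invoke ``triangular plus self-adjoint implies diagonal'' to get $TR_n=\gamma_{n,0}R_n=\phi_n R_n$ with \emph{no} induction; the full identity $\gamma_{n,k}=\phi_{n-2k}$ then follows from a coefficient comparison in the $R$-basis and a separate induction on $n$, whose engine is the annihilation $\sum_{k}\genfrac{[}{]}{0pt}{}{j}{k}_{q}(-1)^{k}q^{\binom{k}{2}}P(q^{-k})=0$ for $\deg P<j$ --- which is precisely the content of Lemma \ref{pom1}, rederived cleanly from (\ref{naw}). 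What your version buys: Proposition \ref{aux3} and the second marginal computation are not needed, the two assertions of the lemma are logically disentangled, and the diagonalization principle makes transparent why the kernel must be of Lancaster form. Three small points. First, your identification $\gamma_{n,0}=\phi_n$ from the $m=0$ term of (\ref{fin}) is right, but (\ref{fin}) as printed carries $(r_{1}^{2}r_{2}^{2})_{2n}$ in the denominator where the proof of Proposition \ref{rozw} actually produces $(r_{1}^{2}r_{2}^{2})_{n}$; you are implicitly (and correctly) reading the latter. Second, the relation you extract for $j\geq1$ carries an overall factor $(r_{1}r_{2})^{j}$ and is vacuous at $r_{1}r_{2}=0$; conclude there by continuity (or rationality) of $\gamma_{n,j}$ and $\phi_{n-2j}$ in the parameters. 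Third, your worry about completeness of $\{R_n\}$ is unnecessary: the triangularity argument only needs that $R_{0},\dots,R_{n}$ span the polynomials of degree at most $n$, which holds because $R_{j}$ has exact degree $j$ with leading coefficient $(r_{1}r_{2})_{j}\neq0$; only the self-adjointness and Fubini steps require the (easy, since $S(q)$ is compact for $|q|<1$) integrability checks you mention.
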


\begin{proof}
The long, detailed proof by induction is shifted to Section \ref{aux1}.
\end{proof}

\begin{proof}
[Proof of the Theorem \ref{glowny}]Now, it is enough to recall results of say
\cite{Szabl21} to get our expansion.
\end{proof}

As a corollary, we get the following nice identities that seem to be of
interest by themselves.

\begin{corollary}
i) For all $r_{1},r_{2},q\in\mathbb{C}$, $n\geq0$, $0\leq u\leq\left\lfloor
n/2\right\rfloor $ we have
\begin{gather*}
\sum_{m=0}^{u}%
\genfrac{[}{]}{0pt}{}{u}{m}%
_{q}\left(  r_{2}^{2}\right)  _{m}\left(  r_{1}r_{2}q^{n-u-m+1}\right)
_{m}\left(  r_{1}^{2}\right)  _{m}\left(  r_{1}r_{2}\right)  _{m}%
w_{n-2m}(m,r_{1},r_{2},q)\\
=\left(  r_{1}^{2}r_{2}^{2}q^{n-2u}\right)  _{2u}w_{n-2u}(0,r_{1},r_{2},q),
\end{gather*}
since we have polynomials on both sides of the equations.

ii) For all $\left\vert r_{1}\right\vert ,\left\vert r_{2}\right\vert
<1,q\in(-1,1)$, $n\geq0$%
\begin{align*}
\left\vert \phi_{n}(r_{1},r_{2},q)\right\vert  &  \leq1,\\
\sum_{n\geq0}\left\vert \phi_{n}(r_{1},r_{2},q)\right\vert ^{2}  &  <\infty.
\end{align*}

\end{corollary}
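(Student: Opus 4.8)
The plan is to read part i) off directly from the two independent descriptions of $\gamma_{n,u}$ already established, and to obtain part ii) from the probabilistic meaning of the coefficients $\phi_n$.

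For part i) I would combine Proposition \ref{rozw} with Lemma \ref{final}. Proposition \ref{rozw} supplies the closed form (\ref{fin}) for $\gamma_{n,u}(r_1,r_2,q)$ as a single sum over $m$, while Lemma \ref{final} asserts $\gamma_{n,u}(r_1,r_2,q)=\phi_{n-2u}(r_1,r_2,q)$. Expanding the latter through the definition (\ref{fi}), namely $\phi_{n-2u}=w_{n-2u}(0,r_1,r_2,q)/(r_1^2r_2^2)_{n-2u}$, and equating it with (\ref{fin}), the claimed identity follows once the two Pochhammer denominators are reconciled. The reconciliation is the elementary splitting $(a)_n=(a)_{n-2u}(aq^{\,n-2u})_{2u}$ applied with $a=r_1^2r_2^2$: multiplying the equality $\gamma_{n,u}=\phi_{n-2u}$ through by the common denominator converts it precisely into the asserted sum $=(r_1^2r_2^2q^{\,n-2u})_{2u}w_{n-2u}(0,r_1,r_2,q)$. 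Since, by (\ref{wm}) and the definitions of the $q$-binomial and $q$-Pochhammer symbols, every factor on both sides is a polynomial in $(r_1,r_2,q)$, this is an identity between polynomials; having verified it on the open region $|r_1|,|r_2|<1$, $q\in(-1,1)$, and since two polynomials coinciding on a nonempty open set are identical, it extends to all $r_1,r_2,q\in\mathbb{C}$.

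For the bound $|\phi_n|\le1$ in part ii) I would use the probabilistic interpretation furnished by Theorem \ref{exp2}. That theorem shows $(1-r_1r_2)f_{CN}(y|x,r_1,q)f_{CN}(x|y,r_2,q)$ is nonnegative and, by (\ref{mar}), integrates in $x$ to $f_R(y|r_1r_2,q)$; by the symmetry noted in Remark \ref{sym} it integrates in $y$ to $f_R(x|r_1r_2,q)$. Hence it is a genuine bivariate probability density whose two marginals are both $f_R(\cdot|r_1r_2,q)$, the orthogonality measure of the polynomials $R_n$. For a pair $(X,Y)$ with this law, the expansion (\ref{LERO}) together with the conditional-moment property recalled in the Introduction gives $E(\hat R_n(X)\mid Y)=\phi_n(r_1,r_2,q)\hat R_n(Y)$, where $\hat R_n$ is the orthonormal version. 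Multiplying by $\hat R_n(Y)$ and taking expectations yields $\phi_n=E\big(\hat R_n(X)\hat R_n(Y)\big)$, the correlation of two unit-variance random variables, so Cauchy--Schwarz gives $|\phi_n|\le\big(E\hat R_n(X)^2\,E\hat R_n(Y)^2\big)^{1/2}=1$. For the square-summability $\sum_n\phi_n^2<\infty$ I would invoke the $l_2$ principle recalled in the Introduction: the coefficients of a Lancaster expansion satisfying (\ref{ms}) lie in $l_2$, and Parseval's identity for the orthonormal system $\{\hat R_n(x)\hat R_n(y)\}$ shows the integral in (\ref{ms}) equals $\sum_n\phi_n^2$. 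Thus it suffices to verify (\ref{ms}) for the density of Theorem \ref{exp2}; using the product form from Remark \ref{sym}, the Radon--Nikodym derivative relative to $f_R(x)f_R(y)$ is a ratio built from the entire products $W(x,y|r_1,q)$, $W(x,y|r_2,q)$, $W(x,x|r_1r_2,q)$ and $W(y,y|r_1r_2,q)$, which for $|q|<1$ are continuous and strictly positive on the compact square $S(q)\times S(q)$, so the derivative is bounded there and its square is trivially integrable against $f_R(x)f_R(y)$.

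The only point requiring care is the $q=1$ instance of the summability claim, where $S(q)=\mathbb{R}$ is no longer compact and the boundedness argument breaks down. There I would fall back on the explicit evaluation $\phi_n(r_1,r_2,1)=\big((r_1+r_2)/(1+r_1r_2)\big)^n$ already used in Corollary iii): since $(1+r_1r_2)^2-(r_1+r_2)^2=(1-r_1^2)(1-r_2^2)>0$ for $|r_1|,|r_2|<1$, the ratio $\lambda=(r_1+r_2)/(1+r_1r_2)$ satisfies $|\lambda|<1$, whence $\sum_n\phi_n^2=\sum_n\lambda^{2n}<\infty$. This geometric decay is what I expect to be the genuine content of a self-contained summability proof: for $|q|<1$ it is subsumed by the compactness argument, but isolating the decay rate of the $q$-sum $w_n(0,r_1,r_2,q)$ in (\ref{wm})--(\ref{fi}) directly would be the main technical obstacle if one wished to bypass the density-integrability route altogether.
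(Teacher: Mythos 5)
Your proposal is correct and follows essentially the same route as the paper: part i) by equating the closed form (\ref{fin}) for $\gamma_{n,u}$ from Proposition \ref{rozw} with the identification $\gamma_{n,u}=\phi_{n-2u}$ from Lemma \ref{final} and extending by polynomial identity, and the bound $|\phi_n|\le 1$ from the probabilistic interpretation (your Cauchy--Schwarz on the correlation $\phi_n=E\bigl(\hat R_n(X)\hat R_n(Y)\bigr)$ is equivalent to the paper's variance-of-conditional-expectation inequality). Your treatment of the square-summability is more careful than the paper's one-line appeal to the mean-square expansion --- in particular the explicit verification of (\ref{ms}) by boundedness of the Radon--Nikodym derivative on the compact square and the separate geometric-decay argument at $q=1$ --- but it is a fleshing-out of the same argument rather than a different one.
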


\begin{proof}
i) The proof follows directly the definition of $\phi_{n}$ and its proved
properties. ii) The fact that $\sum_{n\geq0}\left\vert \phi_{n}(r_{1}%
,r_{2},q)\right\vert ^{2}<\infty$, follows directly the fact that we are
dealing with the mean-square expansion. The fact that $\left\vert \phi
_{n}(r_{1},r_{2},q)\right\vert \leq1$, follows the probabilistic
interpretation of our result. Namely recall, (\ref{ER}) with $r_{1}%
\allowbreak=\allowbreak\rho_{23}$, $r_{2}\allowbreak=\allowbreak\rho_{13}%
\rho_{12}$ and the fact that $E\hat{R}_{n}(Y|r_{1}r_{2},q)\allowbreak
=\allowbreak0$ and $E\hat{R}_{n}^{2}(Y|r_{1}r_{2},q)\allowbreak=\allowbreak1$.
Now recall, the well-known fact that the variance of the conditional
expectation of a random variable, does not exceed its variance. Consequently,
we have
\begin{gather*}
1=E\hat{R}_{n}^{2}(r_{1}r_{2},q)\geq E(E(\hat{R}_{n}(Y|r,q)|Z))^{2}=\\
\phi_{n}(r_{1},r_{2},q)^{2}E\hat{R}_{n}^{2}(r_{1}r_{2},q)=\phi_{n}(r_{1}%
,r_{2},q)^{2}.
\end{gather*}

\end{proof}

\section{List of Lancaster kernels\label{lista}}

\subsection{Symmetric kernels}

Below, we list symmetric Lancaster kernels that can be easily obtained using,
mentioned in the Introduction, the idea of expansion of the ratio of two
densities. The list below has simple sums and sometimes leads to smooth
stationary Markov processes.

1. We start with the following LK called the Poisson-Mehler kernel.
\begin{equation}
f_{CN}(x|y,\rho,q)/f_{N}(x|q)=\sum_{n\geq0}\frac{\rho^{n}}{\left[  n\right]
_{q}!}H_{n}(x|q)H_{n}(y|q). \label{P-M}%
\end{equation}

Its justification is DEI(\ref{HnaP},\ref{anH}). It leads to the so-called
$q$-Ornstein-Uhlenbeck process, for details see, e.g., \cite{Szab-OU-W}.

2. One should mention the following particular case of the above-mentioned
formula, that is $q\mathbb{\allowbreak=\allowbreak}0$. Then $H_{n}%
(x|0)\allowbreak=\allowbreak U_{n}(x/2)$ and $\left[  n\right]  _{0}%
!\allowbreak=\allowbreak1$ and finally we get for all $\left\vert x\right\vert
,\left\vert y\right\vert \leq2,\left\vert \rho\right\vert <1$:%
\begin{equation}
\sum_{n\geq0}\rho^{n}U_{n}(x/2)U_{n}(y/2)\allowbreak=\allowbreak\frac
{1-\rho^{2}}{(1-\rho^{2})^{2}-\rho(1+\rho^{2})xy+\rho^{2}(x^{2}+y^{2})}.
\label{UU}%
\end{equation}

Recently, in \cite{SzabCheb} (\ref{UU}) formula has been proven by other means
together with the following one:

3.
\[
\sum_{n\geq0}\rho^{n}T_{n}(x/2)T_{n}(y/2)=\frac{4(1-\rho^{2})-\rho(3+\rho
^{2})xy+2\rho^{2}(x^{2}+y^{2})}{4((1-\rho^{2})^{2}-\rho(1+\rho^{2})xy+\rho
^{2}(x^{2}+y^{2}))}.
\]

4. The following expansion appeared recently in \cite{Szab22}(4.7):
\begin{gather*}
1+2\sum_{n\geq1}\rho^{n^{2}}T_{n}(x)T_{n}(y)=\\
\frac{1}{2}\left(  \theta(\rho,\arccos(x)-\arccos(y)\right)  +\theta
(\rho,\arccos(x)+\arccos(y)),
\end{gather*}
where $\theta$ denotes the Jacobi Theta function.

5. The following expansion appeared recently in \cite{Szab22}(unnamed
formula):
\begin{gather*}
\frac{4}{\pi^{2}}\sqrt{(1-x^{2})(1-y^{2})}\sum_{n\geq0}\rho^{n(n+2)}%
U_{n}(x)U_{n}(y)=\\
\frac{1}{\rho\pi^{2}}\left(  \theta(\rho,\arccos(x)-\arccos(y)\right)
-\theta(\rho,\arccos(x)+\arccos(y)),
\end{gather*}
where, as before, $\theta$ denotes Jacobi Theta function.

6. It is well-known that the following LK is also true
\begin{gather*}
(1-\rho)\sum_{n\geq0}\frac{n!}{\Gamma(n+\alpha)}L_{n}^{\alpha}(x)L_{n}%
^{\alpha}(y)\rho^{n}=\\
\exp\left(  -\rho\frac{x+y}{1-\rho}\right)  I_{\alpha-1}\left(  \frac
{2(x,y\rho)^{1/2}}{1-\rho}\right)  /(xy\rho)^{(\alpha-1)/2},
\end{gather*}
where $L_{n}^{(\alpha)}(x)$ denotes generalized Laguerre polynomials i.e. the
ones orthogonal with respect to the measure with the density $\exp
(-x)x^{\alpha-1}/\Gamma(\alpha)$ for $x\geq0$ and $\alpha\geq1$. $I_{\alpha
-1}(x)$ denotes a modified Bessel function of the first kind. This kernel also
leads to a smooth, stationary Markov process, as shown in \cite{Szab22}.

Let us remark that, following \cite{Szab22}, the kernels mentioned at points
1., 4., 5., 6. allow to generate stationary Markov processes with polynomial
conditional moments that allow continuous path modification.

\subsection{Non-symmetric kernels}

It has to be underlined that the list of non-symmetric kernels presented below
is far from being complete. There is nice paper, namely \cite{suslov96}
mentioning more of them. However, they are in a very complicated form,
interesting only to specialists in $q$-series theory.

1. We start with the following, known, but recently presented with general
setting in \cite{SzabCheb}:%
\[
\sum_{n\geq0}\rho^{n}U_{n}(x/2)T_{n}(y/2)=\frac{2(1-\rho^{2})+\rho^{2}%
x^{2}-\rho xy}{2((1-\rho^{2})^{2}-\rho(1+\rho^{2})xy+\rho^{2}(x^{2}+y^{2}))}.
\]

2. In \cite{Szab-bAW}(3.12) we have the kernel that after adjusting it to
ranges of $x,y,z$ equal to $S\left(  q\right)  $ and utilizing and the fact
that
\begin{gather*}
f_{CN}(y|x,\rho_{1},q)f_{CN}(x|z,\rho_{2},q)/(f_{CN}(y|z,\rho_{1}\rho
_{2},q)f_{CN}(x|y,\rho_{1},q))\allowbreak\\
=\allowbreak f_{CN}(z|y,\rho_{2},q)/f_{CN}(z|x,\rho_{2},q),
\end{gather*}
we get finally the following non-symmetric kernel:
\[
\sum_{j\geq0}\frac{\rho_{2}^{j}}{\left[  j\right]  _{q}!\left(  \rho_{1}%
^{2}\rho_{2}^{2}\right)  _{j}}P_{j}\left(  z|y,\rho_{1}\rho_{2},q\right)
P_{j}\left(  x|y,\rho_{1},q\right)  =\frac{f_{CN}\left(  z|x,\rho
_{2},q\right)  }{f_{CN}(z|y,\rho_{1}\rho_{2},q)},
\]
true for all $\left\vert x\right\vert ,\left\vert y\right\vert ,\left\vert
z\right\vert \in S(q),\left\vert \rho_{1}\right\vert ,\left\vert \rho
_{1}\right\vert <1$ and $\left\vert q\right\vert \leq1$, where $P_{j}%
(x|y,\rho,q)$ and $f_{CN}(x|y,\rho,q)$ are defined respectively by (\ref{3Pn})
and (\ref{fCN}).

2s. Using the fact that
\[
P_{n}(x|y,\rho,1)\allowbreak=\allowbreak(1-\rho^{2})^{n/2}H_{n}\left(  (x-\rho
y)/\sqrt{1-\rho^{2}}\right)  ,
\]
and%
\[
f_{CN}(x|y,\rho,1)\allowbreak=\frac{1}{\sqrt{2\pi(1-\rho^{2})}}\exp\left(
-\frac{\left(  x-\rho y\right)  ^{2}}{2(1-\rho^{2})}\right)  ,
\]
$\allowbreak$by \cite{Szab2020}(8.24) and \cite{Szab2020}(8.32) we get the
following kernel
\begin{align*}
&  \sum_{j\geq0}\frac{\rho_{2}^{j}(1-\rho_{1}^{2})^{j/2}}{j!(1-\rho_{1}%
^{2}\rho_{2}^{2})^{j/2}}H_{j}(\frac{z-\rho_{1}\rho_{2}y}{\sqrt{1-\rho_{1}%
^{2}\rho_{2}^{2}}}H_{j}(\frac{x-\rho_{1}y}{\sqrt{1-\rho_{1}^{2}}})\\
&  =\sqrt{\frac{1-\rho_{1}^{2}\rho_{2}^{2}}{1-\rho_{2}^{2}}}\exp\left(
\frac{(z-\rho_{1}\rho_{2}y)^{2}}{2(1-\rho_{1}^{2}\rho_{2}^{2})}-\frac
{(z-\rho_{2}x)^{2}}{2(1-\rho_{2}^{2})}\right)  .
\end{align*}

3. The following non-symmetric kernel was presented in \cite{SzablKer}. For
$\left\vert b\right\vert >\left\vert a\right\vert ,\left\vert q\right\vert
<1$, $x,y\in S\left(  q\right)  $%
\[
\sum_{n\geq0}\frac{a^{n}}{\left[  n\right]  _{q}!b^{n}}H_{n}(x|a,q)H_{n}%
\left(  y|b,q\right)  \allowbreak=\allowbreak\left(  a^{2}/b^{2}\right)
_{\infty}\prod_{k=0}^{\infty}\frac{\left(  1-(1-q)xaq^{k}+(1-q)a^{2}%
q^{2k}\right)  }{w\left(  x,y,q^{k}a/b|q\right)  },
\]
where $H_{n}(x|a,q)$ denotes the so-called big $q$-Hermite polynomials. Now
let us change a bit this expansion by renaming its parameters. Let us denote
$a/b\allowbreak=\allowbreak\rho$. Then we can recognize that $\prod
_{k=0}^{\infty}(1-(1-q)axq^{k}+(1-q)a^{2}q^{2k})\allowbreak=\allowbreak
1/\varphi(x|a,q)$ and
\[
\left(  a^{2}/b^{2}\right)  _{\infty}\prod_{k=0}^{\infty}\frac{1}{w\left(
x,y,q^{k}a/b|q\right)  }=f_{CN}(x|y,a/b,q)/f_{N}(x|q).
\]
See also \cite{SzabP-M}.

\section{Complicated proofs and auxiliary facts from $q$-series
theory.\label{aux1}}

We start with the series of auxiliary, simplifying formulae.

\begin{lemma}
\label{Wm}The generating function of the sequence $\left\{  w_{n}%
(0,r_{1},r_{2},q)\right\}  $ is the following:%
\[
\sum_{n\geq0}\frac{t^{n}}{\left(  q\right)  _{n}}w_{n}(0,r_{1},r_{2}%
,q)=\frac{\left(  tr_{1}r_{2}^{2}\right)  _{\infty}\left(  tr_{2}r_{1}%
^{2}\right)  _{\infty}}{\left(  tr_{1}\right)  _{\infty}\left(  tr_{2}\right)
_{\infty}}.
\]
Consequently, we have, for all $n\geq0$, $r_{1},r_{2}\in(-1,1)$ and
$-1<q\leq1$:%
\begin{align}
w_{n}(m,r_{1},r_{2},q)  &  =q^{-nm/2}w_{n}(0,r_{1}q^{m/2},r_{2}q^{m/2}%
,q)\label{id1}\\
&  =\sum_{s=0}^{n}%
\genfrac{[}{]}{0pt}{}{n}{s}%
_{q}r_{1}^{n-s}r_{2}^{s}\left(  q^{m}r_{1}r_{2}\right)  _{s}\left(  q^{m}%
r_{1}r_{2}\right)  _{n-s}.\nonumber
\end{align}
and
\[
\sum_{n\geq0}\frac{t^{n}}{\left(  q\right)  _{n}}w_{n}(m,r_{1},r_{2}%
,q)=\frac{\left(  tq^{m}r_{1}r_{2}^{2}\right)  _{\infty}\left(  tq^{m}%
r_{2}r_{1}^{2}\right)  _{\infty}}{\left(  tr_{1}\right)  _{\infty}\left(
tr_{2}\right)  _{\infty}}%
\]

\end{lemma}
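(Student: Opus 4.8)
The plan is to read $\sum_{n\ge0}t^n w_n(0,r_1,r_2,q)/(q)_n$ as a product of two $q$-exponential generating functions. First I would rewrite the defining sum (\ref{wm}) at $m=0$ as a $q$-binomial convolution,
\[
w_n(0,r_1,r_2,q)=\sum_{s=0}^n \genfrac{[}{]}{0pt}{}{n}{s}_{q} A_s\,B_{n-s},\qquad A_s=r_1^s(r_2^2)_s,\quad B_s=r_2^s(r_1^2)_s.
\]
The basic observation is that, because $(q)_N/((q)_s(q)_{N-s})=\genfrac{[}{]}{0pt}{}{N}{s}_{q}$, the Cauchy product of two series of the shape $\sum_n a_n t^n/(q)_n$ reproduces exactly such a $q$-convolution:
\[
\Bigl(\sum_{n\ge0}\frac{A_n}{(q)_n}t^n\Bigr)\Bigl(\sum_{n\ge0}\frac{B_n}{(q)_n}t^n\Bigr)=\sum_{N\ge0}\frac{t^N}{(q)_N}\sum_{s=0}^N \genfrac{[}{]}{0pt}{}{N}{s}_{q}A_sB_{N-s}=\sum_{N\ge0}\frac{t^N}{(q)_N}w_N(0,r_1,r_2,q).
\]

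Next I would evaluate each factor by the $q$-binomial theorem $\sum_n (a)_n z^n/(q)_n=(az)_\infty/(z)_\infty$, which is itself a consequence of the two Euler expansions (\ref{binT}) and (\ref{naw}) (multiply them and apply the finite form in (\ref{naw}) to collapse the inner sum). With $a=r_2^2,\ z=r_1 t$ the first factor is $(r_1 r_2^2 t)_\infty/(r_1 t)_\infty$, and with $a=r_1^2,\ z=r_2 t$ the second is $(r_1^2 r_2 t)_\infty/(r_2 t)_\infty$. Multiplying gives the asserted generating function. All these manipulations are legitimate for $|q|<1$ and $|t|$ small, which is enough to identify coefficients.

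For the two displayed consequences I would proceed as follows. The rescaling identity $w_n(m,r_1,r_2,q)=q^{-nm/2}w_n(0,r_1q^{m/2},r_2q^{m/2},q)$ is immediate from (\ref{wm}) upon pulling the factor $q^{m(s+(n-s))/2}=q^{mn/2}$ out of $(r_1q^{m/2})^s(r_2q^{m/2})^{n-s}$; it was already recorded just before Theorem \ref{glowny}. Substituting $t\mapsto tq^{-m/2}$ into the generating function for $w_n(0,\cdot)$ then shifts the four $q$-Pochhammer arguments so that the two denominators return to $(tr_1)_\infty(tr_2)_\infty$ while each numerator argument acquires an extra factor $q^m$, producing $\sum_n t^n w_n(m,r_1,r_2,q)/(q)_n=(tq^m r_1 r_2^2)_\infty(tq^m r_2 r_1^2)_\infty/((tr_1)_\infty(tr_2)_\infty)$.

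Finally, to obtain the second equality in (\ref{id1}) I would show the claimed alternative sum has this very same generating function. Running the convolution argument on $\sum_s \genfrac{[}{]}{0pt}{}{n}{s}_{q} r_1^{n-s}r_2^s(q^mr_1r_2)_s(q^mr_1r_2)_{n-s}$, now with the factor sequences $r_2^s(q^mr_1r_2)_s$ and $r_1^s(q^mr_1r_2)_s$, the $q$-binomial theorem gives the factors $(q^m r_1 r_2^2 t)_\infty/(r_2 t)_\infty$ and $(q^m r_1^2 r_2 t)_\infty/(r_1 t)_\infty$, whose product coincides with the generating function just computed. By uniqueness of coefficients in a convergent power series, the two expressions for $w_n(m,\cdot)$ agree for every $n$; since both sides are polynomials in $r_1,r_2,q$, the identity persists for all $-1<q\le1$, including the degenerate value $q=1$ where the generating function itself is unavailable. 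The only point demanding care anywhere is the bookkeeping of the powers of $q$ in the four Pochhammer arguments after the substitution $t\mapsto tq^{-m/2}$; the conceptual content is entirely the $q$-binomial theorem, so I do not expect a genuine obstacle.
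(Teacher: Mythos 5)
Your proof is correct and follows essentially the same route as the paper: interchange the order of summation so that the generating function factors into two series summable by the $q$-binomial theorem, then re-pair the four infinite $q$-Pochhammer products (numerators with the opposite denominators) and identify coefficients to obtain the alternative convolution form of $w_n(m,r_1,r_2,q)$. If anything, your write-up is more explicit than the paper's, which compresses the re-pairing and coefficient-identification step into the terse remark that the product is unchanged when the two denominator factors are swapped, ``from which follows directly'' the identity.
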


\begin{proof}
We have%
\[
\sum_{n\geq0}\frac{t^{n}}{\left(  q\right)  _{n}}w_{n}(0,r_{1},r_{2}%
,q)=\sum_{n\geq0}\sum_{s=0}^{n}\frac{\left(  tr_{1}\right)  ^{s}}{\left(
q\right)  _{s}}\left(  r_{2}^{2}\right)  _{s}\frac{\left(  tr_{2}\right)
^{n-s}}{\left(  q\right)  _{n-s}}\left(  r_{1}^{2}\right)  _{n-s}.
\]
Now, changing the order of summation we get%
\begin{align*}
\sum_{n\geq0}\frac{t^{n}}{\left(  q\right)  _{n}}w_{n}(0,r_{1},r_{2},q)  &
=\sum_{s\geq0}\frac{\left(  tr_{1}\right)  ^{s}}{\left(  q\right)  _{s}%
}\left(  r_{2}^{2}\right)  _{s}\sum_{n\geq s}\frac{\left(  tr_{2}\right)
^{n-s}}{\left(  q\right)  _{n-s}}\left(  r_{1}^{2}\right)  _{n-s}\\
&  =\frac{\left(  tr_{1}r_{2}^{2}\right)  _{\infty}\left(  tr_{2}r_{1}%
^{2}\right)  _{\infty}}{\left(  tr_{1}\right)  _{\infty}\left(  tr_{2}\right)
_{\infty}}.
\end{align*}
Further, we notice, that
\[
\frac{\left(  tr_{1}r_{2}^{2}\right)  _{\infty}\left(  tr_{2}r_{1}^{2}\right)
_{\infty}}{\left(  tr_{1}\right)  _{\infty}\left(  tr_{2}\right)  _{\infty}%
}=\frac{\left(  tr_{1}r_{2}^{2}\right)  _{\infty}\left(  tr_{2}r_{1}%
^{2}\right)  _{\infty}}{\left(  tr_{2}\right)  _{\infty}\left(  tr_{1}\right)
_{\infty}},
\]
from which follows directly (\ref{id1}).
\end{proof}

\begin{lemma}
\label{spec1}The following identity is true for all $n\geq0$, $r_{1},r_{2}%
\in(-1,1)$ and $-1<q\leq1$:%
\[
\sum_{s=0}^{n}%
\genfrac{[}{]}{0pt}{}{n}{s}%
_{q}r_{1}^{n-s}r_{2}^{s}\left(  r_{1}^{2}\right)  _{s}\left(  r_{1}%
r_{2}\right)  _{s}/\left(  r_{1}^{2}r_{2}^{2}\right)  _{s}=\frac{1}{\left(
r_{1}^{2}r_{2}^{2}\right)  _{n}}w_{n}(0,r_{1},r_{2},q)=\phi_{n}(r_{1}%
,r_{2},q).
\]

\end{lemma}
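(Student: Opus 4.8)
The plan is to observe that the second equality in the statement is merely the definition \eqref{fi} of $\phi_n$, so all the content sits in the first equality. Writing $L_n$ for the left-hand sum and using $(r_1^2r_2^2)_n/(r_1^2r_2^2)_s=(r_1^2r_2^2q^{s})_{n-s}$ for $0\le s\le n$, I would first clear the offending denominator $(r_1^2r_2^2)_s$: the claim becomes the polynomial identity
\[
A_n:=\sum_{s=0}^{n}\genfrac{[}{]}{0pt}{}{n}{s}_{q}r_1^{n-s}r_2^{s}\left(r_1^2\right)_s\left(r_1r_2\right)_s\left(r_1^2r_2^2q^{s}\right)_{n-s}=w_n(0,r_1,r_2,q).
\]
I would prove this by comparing the generating functions $\sum_{n\ge0}t^nA_n/(q)_n$ and $\sum_{n\ge0}t^nw_n(0,r_1,r_2,q)/(q)_n$, the latter having already been evaluated in Lemma \ref{Wm}.

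For the generating function of $A_n$ I would interchange the order of summation, putting $m=n-s$:
\[
\sum_{n\ge0}\frac{t^n}{(q)_n}A_n=\sum_{s\ge0}\frac{(tr_2)^s}{(q)_s}\left(r_1^2\right)_s\left(r_1r_2\right)_s\sum_{m\ge0}\frac{(tr_1)^m}{(q)_m}\left(r_1^2r_2^2q^{s}\right)_m.
\]
The inner sum is summed by the $q$-binomial theorem $\sum_{m\ge0}(a)_m z^m/(q)_m=(az)_\infty/(z)_\infty$ (the general form of \eqref{binT}, see \cite{Andrews1999}, \cite{GasRah}), with $a=r_1^2r_2^2q^{s}$ and $z=tr_1$; it equals $(tr_1^3r_2^2q^{s})_\infty/(tr_1)_\infty=(tr_1^3r_2^2)_\infty/\left((tr_1^3r_2^2)_s(tr_1)_\infty\right)$. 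Pulling the $s$-free factors out, what remains is
\[
\frac{(tr_1^3r_2^2)_\infty}{(tr_1)_\infty}\,{}_{2}\phi_{1}\!\left(r_1^2,\,r_1r_2;\,tr_1^3r_2^2;\,q,\,tr_2\right).
\]

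The decisive point is that the denominator parameter $c=tr_1^3r_2^2$ now carries the variable $t$, so that the argument $tr_2$ of this ${}_{2}\phi_{1}$ equals $c/(ab)$ with $a=r_1^2$, $b=r_1r_2$; this is exactly the balance required by the $q$-Gauss summation theorem, and it holds identically in $t$. Applying $q$-Gauss gives
\[
{}_{2}\phi_{1}\!\left(r_1^2,r_1r_2;tr_1^3r_2^2;q,tr_2\right)=\frac{(tr_1r_2^2)_\infty(tr_1^2r_2)_\infty}{(tr_1^3r_2^2)_\infty(tr_2)_\infty},
\]
and after the $(tr_1^3r_2^2)_\infty$ factors cancel I obtain $\sum_{n\ge0}t^nA_n/(q)_n=(tr_1r_2^2)_\infty(tr_2r_1^2)_\infty/\left((tr_1)_\infty(tr_2)_\infty\right)$, which is precisely the generating function of $\{w_n(0,r_1,r_2,q)\}$ from Lemma \ref{Wm}. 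Since $\{t^n/(q)_n\}$ are independent, comparing coefficients yields $A_n=w_n(0,r_1,r_2,q)$, hence $L_n=\phi_n$.

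For $|r_1|,|r_2|<1$, $|q|<1$ and $t$ small, the rearrangement and both summation theorems are justified by absolute convergence; the resulting $A_n=w_n(0,r_1,r_2,q)$ is a polynomial identity in $r_1,r_2,q$, so it persists at $q=1$ by continuity. The only genuine obstacle is the algebraic insight behind the very first move: multiplying through by $(r_1^2r_2^2)_n$ turns the generating function into one whose inner ${}_{2}\phi_{1}$ is $q$-Gauss summable, whereas the generating function of $L_n$ itself produces a ${}_{2}\phi_{1}$ with \emph{constant} denominator $r_1^2r_2^2$, which meets the $q$-Gauss condition only at the isolated value $t=1/r_1$ and is therefore useless. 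Once the denominator is cleared, the computation is entirely routine.
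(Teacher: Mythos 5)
Your proof is correct and is essentially identical to the paper's own argument: the paper likewise clears the denominator via $(r_1^2r_2^2)_n/(r_1^2r_2^2)_s=(r_1^2r_2^2q^s)_{n-s}$, forms the generating function $\sum_n t^nA_n/(q)_n$, interchanges the sums, applies the $q$-binomial theorem to reach $\frac{(tr_1^3r_2^2)_\infty}{(tr_1)_\infty}\,{}_2\phi_1(r_1^2,r_1r_2;tr_1^3r_2^2;q,tr_2)$, and sums it by $q$-Gauss using exactly the balance $tr_2=c/(ab)$ you identify, then invokes Lemma \ref{Wm}. Your closing remark about why the $t$-dependent denominator parameter is the decisive trick, and the continuity argument at $q=1$, are small but accurate additions not spelled out in the paper.
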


\begin{proof}
We will prove it by the generating function method. Firstly we notice that:
\[
\left(  r_{1}^{2}r_{2}^{2}\right)  _{n}\sum_{s=0}^{n}%
\genfrac{[}{]}{0pt}{}{n}{s}%
_{q}r_{1}^{n-s}r_{2}^{s}\left(  r_{1}^{2}\right)  _{s}\frac{\left(  r_{1}%
r_{2}\right)  _{s}}{\left(  r_{1}^{2}r_{2}^{2}\right)  _{s}}=\sum_{s=0}^{n}%
\genfrac{[}{]}{0pt}{}{n}{s}%
_{q}r_{1}^{n-s}r_{2}^{s}\left(  r_{1}^{2}\right)  _{s}\left(  r_{1}%
r_{2}\right)  _{s}\left(  r_{1}^{2}r_{2}^{2}q^{s}\right)  _{n-s}.
\]
Secondly, we calculate the generating function of the sequence \newline%
$\left\{  \sum_{s=0}^{n}%
\genfrac{[}{]}{0pt}{}{n}{s}%
_{q}r_{1}^{n-s}r_{2}^{s}\left(  r_{1}^{2}\right)  _{s}\left(  r_{1}%
r_{2}\right)  _{s}\left(  r_{1}^{2}r_{2}^{2}q^{s}\right)  _{n-s}\right\}
_{n\geq0}$. We have
\begin{align*}
&  \sum_{n\geq0}\frac{t^{n}}{\left(  q\right)  _{n}}\sum_{s=0}^{n}%
\genfrac{[}{]}{0pt}{}{n}{s}%
_{q}r_{1}^{n-s}r_{2}^{s}\left(  r_{1}^{2}\right)  _{s}\left(  r_{1}%
r_{2}\right)  _{s}\left(  r_{1}^{2}r_{2}^{2}q^{s}\right)  _{n-s}\\
&  =\sum_{s\geq0}\frac{\left(  tr_{2}\right)  ^{s}}{\left(  q\right)  _{s}%
}\left(  r_{1}^{2}\right)  _{s}\left(  r_{1}r_{2}\right)  _{s}\sum_{n\geq
s}\frac{\left(  tr_{1}\right)  ^{n-s}}{\left(  q\right)  _{n-s}}\left(
r_{1}^{2}r_{2}^{2}q^{s}\right)  _{n-s}\\
&  =\sum_{s\geq0}\frac{\left(  tr_{2}\right)  ^{s}}{\left(  q\right)  _{s}%
}\left(  r_{1}^{2}\right)  _{s}\left(  r_{1}r_{2}\right)  _{s}\frac{\left(
tr_{1}^{3}r_{2}^{2}q^{s}\right)  _{\infty}}{\left(  tr_{1}\right)  _{\infty}%
}\\
&  =\frac{\left(  tr_{1}^{3}r_{2}^{2}\right)  _{\infty}}{\left(
tr_{1}\right)  _{\infty}}\sum_{s\geq0}\frac{\left(  tr_{2}\right)  ^{s}%
}{\left(  q\right)  _{s}}\frac{\left(  r_{1}^{2}\right)  _{s}\left(
r_{1}r_{2}\right)  _{s}}{\left(  tr_{1}^{3}r_{2}^{2}\right)  _{s}}%
=\frac{\left(  tr_{1}^{3}r_{2}^{2}\right)  _{\infty}}{\left(  tr_{1}\right)
_{\infty}}~_{2}\phi_{1}\left(
\begin{array}
[c]{cc}%
r_{1}^{2} & r_{1}r_{2}\\
tr_{1}^{3}r_{2}^{2} &
\end{array}
;q,tr_{2}\right)  ,
\end{align*}
where $_{2}\phi_{1}$ denotes the so-called basic hypergeometric function (see,
e.g., \cite{KLS}(1.10.1)) (different from the function defined by (\ref{fi})).
Reading its properties, in particular, the so-called reduction formulae, we
observe that:
\begin{equation}
tr_{2}=\frac{tr_{1}^{3}r_{2}^{2}}{r_{1}^{2}r_{1}r_{2}}. \label{upr}%
\end{equation}
So now we use the so-called $q$-Gauss sum. It is one of the reduction formulae
for $_{2}\phi_{1}$ presented in \cite{KLS}. Namely the one given by (1.13.2)
with (\ref{upr}) being equivalent to $ab/c\allowbreak=\allowbreak z^{-1}$. We
get then
\[
_{2}\phi_{1}\left(
\begin{array}
[c]{cc}%
r_{1}^{2} & r_{1}r_{2}\\
tr_{1}^{3}r_{2}^{2} &
\end{array}
;q,tr_{2}\right)  =\frac{\left(  tr_{1}^{3}r_{2}^{2}/r_{1}^{2}\right)
_{\infty}\left(  tr_{1}^{3}r_{2}^{2}/r_{1}r_{2}\right)  _{\infty}}{\left(
tr_{1}^{3}r_{2}^{2}\right)  _{\infty}\left(  tr_{2}\right)  _{\infty}}.
\]
Hence, indeed we have
\[
\frac{\left(  tr_{1}^{3}r_{2}^{2}\right)  _{\infty}}{\left(  tr_{1}\right)
_{\infty}}~_{2}\phi_{1}\left(
\begin{array}
[c]{cc}%
r_{1}^{2} & r_{1}r_{2}\\
tr_{1}^{3}r_{2}^{2} &
\end{array}
;q,tr_{2}\right)  =\frac{\left(  tr_{1}r_{2}^{2}\right)  _{\infty}\left(
tr_{2}r_{1}^{2}\right)  _{\infty}}{\left(  tr_{1}\right)  _{\infty}\left(
tr_{2}\right)  _{\infty}}.
\]
Now we recall the assertion on the Lemma \ref{Wm}.
\end{proof}

\begin{lemma}
\label{pom1}The following identity is true for all $n\geq0$, $r_{1},r_{2}%
\in(-1,1)$ and $-1<q\leq1$:%
\[
\sum_{j=0}^{m}%
\genfrac{[}{]}{0pt}{}{m}{j}%
_{q}(-1)^{m-j}q^{\binom{m-j}{2}}\frac{\left(  aq^{j}\right)  _{m}}{1-aq^{j}}=%
\begin{cases}
1/(1-a)\text{,} & \text{if }m=0\text{;}\\
0, & \text{if }m>0\text{.}%
\end{cases}
\]

\end{lemma}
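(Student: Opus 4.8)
The plan is to collapse the quotient $(aq^{j})_{m}/(1-aq^{j})$ into a single $q$-Pochhammer symbol and then apply the $q$-binomial theorem (\ref{naw}) twice, reversing the order of summation in between so that an inner sum vanishes identically.

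First I would dispose of the case $m=0$ separately: only the $j=0$ term is present and, since $(a)_{0}=1$ and $\binom{0}{2}=0$, it equals $1/(1-a)$, as claimed. For $m\ge 1$ I would use the factorization $(aq^{j})_{m}=\prod_{i=0}^{m-1}(1-aq^{j+i})$, whose $i=0$ factor is precisely $1-aq^{j}$; cancelling it gives $(aq^{j})_{m}/(1-aq^{j})=(aq^{j+1})_{m-1}$, so the left-hand side becomes
\[
S_{m}=\sum_{j=0}^{m}\genfrac{[}{]}{0pt}{}{m}{j}_{q}(-1)^{m-j}q^{\binom{m-j}{2}}(aq^{j+1})_{m-1}.
\]
Next I would expand the surviving Pochhammer symbol by (\ref{naw}) with $t=aq^{j+1}$, writing
\[
(aq^{j+1})_{m-1}=\sum_{k=0}^{m-1}\genfrac{[}{]}{0pt}{}{m-1}{k}_{q}q^{\binom{k}{2}}(-a)^{k}q^{(j+1)k},
\]
and interchange the two finite summations so that the $j$-sum is carried out first.

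The inner $j$-sum, after the substitution $i=m-j$ (which leaves the $q$-binomial coefficient unchanged), reduces to
\[
\sum_{j=0}^{m}\genfrac{[}{]}{0pt}{}{m}{j}_{q}(-1)^{m-j}q^{\binom{m-j}{2}}q^{jk}=q^{mk}\sum_{i=0}^{m}\genfrac{[}{]}{0pt}{}{m}{i}_{q}q^{\binom{i}{2}}(-q^{-k})^{i}=q^{mk}(q^{-k})_{m},
\]
again by (\ref{naw}), now with $t=q^{-k}$. The crux of the argument, and the step I expect to be the only real obstacle, is then the observation that for every index $k$ in the range $0\le k\le m-1$ the factor $(q^{-k})_{m}=\prod_{s=0}^{m-1}(1-q^{s-k})$ contains the term $s=k$ equal to $1-q^{0}=0$, so the whole symbol vanishes. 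Consequently each term of the remaining $k$-sum is zero and $S_{m}=0$ for $m\ge 1$, which together with the case $m=0$ gives the stated dichotomy. Everything beyond this is routine bookkeeping with signs and powers of $q$; the essential mechanism is the twofold use of the $q$-binomial theorem and the vanishing of $(q^{-k})_{m}$ at a negative integer exponent in the admissible range.
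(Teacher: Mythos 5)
Your proof is correct and follows essentially the same route as the paper's: cancel the factor $1-aq^{j}$ to reduce the Pochhammer symbol to $(aq^{j+1})_{m-1}$, expand it by the finite form of (\ref{naw}), interchange the finite sums, and observe that the inner sum collapses to a multiple of $(q^{-k})_{m}$, which vanishes for every $k$ in the range $0\le k\le m-1$. The only cosmetic difference is that the paper first reverses the summation index $j\mapsto m-j$ and then cancels, whereas you cancel first and reverse inside the inner sum; the key mechanism is identical.
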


\begin{proof}
It is obvious that when $m\allowbreak=\allowbreak0$, the identity is true.
Hence, let us assume that $m\geq1$. Now we change the index of summation from
$j$ to $t\allowbreak=\allowbreak m-j$ and apply (\ref{naw}). We get then
\begin{gather*}
\sum_{j=0}^{m}%
\genfrac{[}{]}{0pt}{}{m}{j}%
_{q}(-1)^{j}q^{\binom{j}{2}}\frac{\left(  aq^{m-j}\right)  _{m}}{1-aq^{m-j}%
}=\sum_{j=0}^{m}%
\genfrac{[}{]}{0pt}{}{m}{j}%
_{q}(-1)^{j}q^{\binom{j}{2}}(aq^{m-j})_{m-1}\\
=\sum_{j=0}^{m}%
\genfrac{[}{]}{0pt}{}{m}{j}%
_{q}(-1)^{j}q^{\binom{j}{2}}\sum_{k=0}^{m-1}%
\genfrac{[}{]}{0pt}{}{m-1}{k}%
_{q}(-aq^{m-j})^{k}q^{\binom{k}{2}}\\
=\sum_{k=0}^{m-1}%
\genfrac{[}{]}{0pt}{}{m-1}{k}%
_{q}(-a)^{k}q^{m-j})^{k}q^{\binom{k}{2}}\sum_{j=0}^{m}%
\genfrac{[}{]}{0pt}{}{m}{j}%
_{q}(-1)^{j}q^{\binom{j}{2}}q^{k(m-j)}\\
=\sum_{k=0}^{m-1}%
\genfrac{[}{]}{0pt}{}{m-1}{k}%
_{q}(-a)^{k}q^{m-j})^{k}q^{\binom{k}{2}}q^{km}\sum_{j=0}^{m}%
\genfrac{[}{]}{0pt}{}{m}{j}%
_{q}(-1)^{j}q^{\binom{j}{2}}q^{-kj}\\
=\sum_{k=0}^{m-1}%
\genfrac{[}{]}{0pt}{}{m-1}{k}%
_{q}(-a)^{k}q^{m-j})^{k}q^{\binom{k}{2}}q^{km}(q^{-k})_{m}=0.
\end{gather*}
This is so since $\left(  q^{-k}\right)  _{m}\allowbreak=\allowbreak0$ for
every $k\allowbreak=\allowbreak0,\ldots,m-1$.
\end{proof}

\begin{lemma}
\label{skrt}The following identity is true for all $n,t\geq0$, $r_{1},r_{2}%
\in(-1,1)$ and $-1<q\leq1$:%
\begin{equation}
\sum_{k=0}^{m}%
\genfrac{[}{]}{0pt}{}{m}{k}%
_{q}(-r_{2}^{2})^{k}q^{\binom{k}{2}}\frac{\left(  r_{1}^{2}\right)  _{t+m+k}%
}{\left(  r_{1}^{2}r_{2}^{2}\right)  _{t+m+k}}=\frac{\left(  r_{2}^{2}\right)
_{m}\left(  r_{1}^{2}\right)  _{t+m}}{\left(  r_{1}^{2}r_{2}^{2}\right)
_{t+2m}}. \label{id2}%
\end{equation}

\end{lemma}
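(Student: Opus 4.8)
The plan is to recognize the left-hand side as a terminating ${}_2\phi_1$ series and to evaluate it by the same $q$-Gauss reduction formula already used in the proof of Lemma~\ref{spec1}. To lighten the notation I would first put $a=r_1^2$, $b=r_2^2$ and set $N=t+m$, so that the asserted identity reads
\[
\sum_{k=0}^{m}\genfrac{[}{]}{0pt}{}{m}{k}_{q}(-b)^{k}q^{\binom{k}{2}}\frac{(a)_{N+k}}{(ab)_{N+k}}=\frac{(b)_{m}(a)_{N}}{(ab)_{N+m}}.
\]

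First I would isolate the $k$-dependence of the Pochhammer ratio via $(x)_{N+k}=(x)_{N}(xq^{N})_{k}$ applied to $x=a$ and $x=ab$, which pulls the factor $(a)_{N}/(ab)_{N}$ out of the sum. Next I would convert the $q$-binomial coefficient into a rising $q$-Pochhammer symbol with base $q^{-m}$: directly from the definition one verifies the standard identity $\genfrac{[}{]}{0pt}{}{m}{k}_{q}q^{\binom{k}{2}}(-1)^{k}=(q^{-m})_{k}q^{mk}/(q)_{k}$, so that $\genfrac{[}{]}{0pt}{}{m}{k}_{q}q^{\binom{k}{2}}(-b)^{k}=\frac{(q^{-m})_{k}}{(q)_{k}}(bq^{m})^{k}$. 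After these two manipulations the remaining sum is exactly
\[
{}_2\phi_1\!\left(\begin{array}{c} q^{-m},\,aq^{N}\\ abq^{N}\end{array};q,\,bq^{m}\right),
\]
a terminating series in $k=0,\dots,m$ because of the factor $(q^{-m})_{k}$.

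The decisive observation is that the argument $z=bq^{m}$ equals $\gamma/(\alpha\beta)$ for the numerator parameters $\alpha=q^{-m}$, $\beta=aq^{N}$ and the denominator parameter $\gamma=abq^{N}$; this is precisely the $q$-Gauss summation case. Applying the $q$-Gauss sum (the same \cite{KLS} reduction formula invoked in Lemma~\ref{spec1}) gives $\frac{(\gamma/\alpha,\gamma/\beta)_{\infty}}{(\gamma,\gamma/(\alpha\beta))_{\infty}}=\frac{(abq^{N+m},b)_{\infty}}{(abq^{N},bq^{m})_{\infty}}$, which collapses through $(b)_{\infty}/(bq^{m})_{\infty}=(b)_{m}$ and $(abq^{N+m})_{\infty}/(abq^{N})_{\infty}=1/(abq^{N})_{m}$ to $(b)_{m}/(abq^{N})_{m}$. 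Multiplying back by $(a)_{N}/(ab)_{N}$ and using $(ab)_{N}(abq^{N})_{m}=(ab)_{N+m}=(ab)_{t+2m}$ together with $(a)_{N}=(a)_{t+m}$ recovers the right-hand side.

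The main obstacle is purely the bookkeeping: correctly deriving the $q^{-m}$-rewriting of the $q$-binomial coefficient and then confirming that the argument matches the $q$-Gauss case $z=\gamma/(\alpha\beta)$ rather than one of the two $q$-Chu--Vandermonde cases; once the series is identified as a terminating $q$-Gauss ${}_2\phi_1$, the summation is immediate. An inductive proof on $m$ is also feasible, but it would hinge on a less transparent contiguity relation, so I would prefer the hypergeometric route.
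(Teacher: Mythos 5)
Your proof is correct. It rests on the same summation theorem as the paper's, but reaches it by a different route: the paper clears the denominators (multiplying through by $\left(r_{1}^{2}r_{2}^{2}\right)_{t+2m}/\left(r_{1}^{2}\right)_{t+m}$) so that the identity becomes
\[
\sum_{k=0}^{m}\genfrac{[}{]}{0pt}{}{m}{k}_{q}(-r_{2}^{2})^{k}q^{\binom{k}{2}}\left(r_{1}^{2}q^{t+m}\right)_{k}\left(r_{1}^{2}r_{2}^{2}q^{t+m+k}\right)_{m-k}=\left(r_{2}^{2}\right)_{m},
\]
which is then read off as the special case $a=r_{1}^{2}q^{t+m}$, $b=r_{2}^{2}$ of the polynomial identity (\ref{aux2}) imported from an earlier paper; that identity is itself a cleared-denominator form of the terminating $q$-Vandermonde sum. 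You instead keep the Pochhammer ratio, split off $(a)_{N}/(ab)_{N}$, rewrite the $q$-binomial weight via $\genfrac{[}{]}{0pt}{}{m}{k}_{q}q^{\binom{k}{2}}(-b)^{k}=\frac{(q^{-m})_{k}}{(q)_{k}}(bq^{m})^{k}$, and evaluate the resulting terminating ${}_{2}\phi_{1}\left(q^{-m},aq^{N};abq^{N};q,bq^{m}\right)$ by the $q$-Gauss sum, exactly as the paper does for Lemma \ref{spec1}. All the intermediate computations check out, including the collapse $(b)_{\infty}/(bq^{m})_{\infty}=(b)_{m}$ and $(ab)_{N}(abq^{N})_{m}=(ab)_{t+2m}$. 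The only point worth tightening is the appeal to the infinite-product form of $q$-Gauss, which presupposes $|q|<1$: since your series terminates, it is cleaner to quote the terminating $q$-Chu--Vandermonde evaluation ${}_{2}\phi_{1}(q^{-m},\beta;\gamma;q,\gamma q^{m}/\beta)=(\gamma/\beta)_{m}/(\gamma)_{m}$ (your ``$q$-Gauss case'' with $\alpha=q^{-m}$ is precisely this), or to note that both sides are rational in $q$ so the identity extends to $q=1$ by continuity. Your route has the advantage of being self-contained relative to standard references, whereas the paper's is shorter only because it outsources the key identity to \cite{Szab6}.
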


\begin{proof}
Recall, that in \cite{Szab6} (Lemma 1ii) ) the following identity has been
proved for all $-1<q\leq1$, $a,b\in\mathbb{R}$ and $n\geq0$:%
\begin{equation}
\sum_{j=0}^{n}%
\genfrac{[}{]}{0pt}{}{n}{j}%
_{q}(-b)^{j}q^{\binom{j}{2}}\left(  a\right)  _{j}\left(  abq^{j}\right)
_{n-j}\allowbreak=\allowbreak\left(  b\right)  _{n}. \label{aux2}%
\end{equation}

So now let us transform, a bit, the identity that we must prove. Namely, after
multiplying both sides by $\left(  r_{1}^{2}r_{2}^{2}\right)  _{t+m}$ and
dividing again both sides by $\left(  r_{1}^{2}\right)  _{t+m}$ we get%
\[
\sum_{k=0}^{m}%
\genfrac{[}{]}{0pt}{}{m}{k}%
_{q}(-r_{2}^{2})^{k}q^{\binom{k}{2}}\left(  r_{1}^{2}q^{t+m}\right)
_{k}\left(  r_{1}^{2}r_{2}^{2}q^{t+m}\right)  _{m-k}.
\]
We apply (\ref{aux2}) with $a\allowbreak=\allowbreak r_{1}^{2}q^{t+m}$ and
$b\allowbreak=\allowbreak r_{2}^{2}$ and get (\ref{id2}).
\end{proof}

\begin{proof}
[Proof of Proposition \ref{rozw}]We start with inserting (\ref{HRnaR}) into
(\ref{Dn}) and getting:%
\begin{gather*}
D_{n}(y|r_{1},r_{2},q)=\sum_{s=0}^{n}%
\genfrac{[}{]}{0pt}{}{n}{s}%
_{q}r_{1}^{n-s}r_{2}^{s}\frac{\left(  r_{1}^{2}\right)  _{s}}{\left(
r_{1}^{2}r_{2}^{2}\right)  _{s}}\times\\
\sum_{u=0}^{\left\lfloor n/2\right\rfloor }\frac{\left[  s\right]
_{q}!\left[  n-s\right]  _{q}!(1-r_{1}r_{2}q^{n-2u})}{\left[  u\right]
_{q}!\left[  n-2u\right]  _{q}!}R_{n-2u}(y|r_{1}r_{2},q)\times\\
\sum_{m=0}^{u}%
\genfrac{[}{]}{0pt}{}{u}{m}%
_{q}\frac{(r_{1}r_{2})^{u-m}}{(1-r_{1}r_{2})\left(  r_{1}r_{2}q\right)
_{n-m-u}}\sum_{k=0}^{m}%
\genfrac{[}{]}{0pt}{}{m}{k}%
_{q}%
\genfrac{[}{]}{0pt}{}{n-2m}{n+k-s-m}%
_{q}(-r_{1}r_{2})^{k}q^{\binom{k}{2}}\left(  r_{1}r_{2}\right)  _{s-k}%
\end{gather*}

First, we change the order of summation and get
\begin{gather*}
D_{n}(y|r_{1},r_{2},q)=\sum_{u=0}^{\left\lfloor n/2\right\rfloor }%
\frac{\left[  n\right]  _{q}!(1-r_{1}r_{2}q^{n-2u})}{\left[  u\right]
_{q}!\left[  n-2u\right]  _{q}!\left(  r_{1}r_{2}\right)  _{n-u+1}}%
R_{n-2u}(y|r_{1}r_{2},q)\sum_{s=0}^{n}r_{1}^{n-s}r_{2}^{s}\frac{\left(
r_{1}^{2}\right)  _{s}}{\left(  r_{1}^{2}r_{2}^{2}\right)  _{s}}\\
\times\sum_{m=0}^{u}%
\genfrac{[}{]}{0pt}{}{u}{m}%
_{q}\left(  r_{1}r_{2}\right)  ^{u-m}\left(  r_{1}r_{2}q^{n+1-u-m}\right)
_{m}\times\\
\sum_{k=0}^{m}%
\genfrac{[}{]}{0pt}{}{m}{k}%
_{q}%
\genfrac{[}{]}{0pt}{}{n-2m}{n+k-s-m}%
_{q}(-r_{1}r_{2})^{k}q^{\binom{k}{2}}\left(  r_{1}r_{2}\right)  _{s-k}.
\end{gather*}
On the way, we used the well-known property of the $q$-Pochhammer symbol (see,
e.g., \cite{KLS}):%
\[
\left(  a\right)  _{n+m}=\left(  a\right)  _{n}\left(  aq^{n}\right)  _{m}.
\]

Now notice, that for $%
\genfrac{[}{]}{0pt}{}{n-2m}{n-s-(m-k)}%
_{q}$ to be nonzero we have to have $n-s\geq m-k$ and $n-2m-n+s+m-k\geq
0,\ $which leads to $s\geq m+k$. Hence, we have further:%
\begin{gather*}
D_{n}(y|r_{1},r_{2},q)=\sum_{u=0}^{\left\lfloor n/2\right\rfloor }%
\frac{\left[  n\right]  _{q}!(1-r_{1}r_{2}q^{n-2u})}{\left[  u\right]
_{q}!\left[  n-2u\right]  _{q}!\left(  r_{1}r_{2}\right)  _{n-u+1}}%
R_{n-2u}(y|r_{1}r_{2},q)\times\\
\sum_{m=0}^{u}%
\genfrac{[}{]}{0pt}{}{u}{m}%
_{q}(r_{1}r_{2})^{u-m}\left(  r_{1}r_{2}q^{n-u-m+1}\right)  _{m}\times\\
\sum_{k=0}^{m}%
\genfrac{[}{]}{0pt}{}{m}{k}%
_{q}(-r_{1}r_{2})^{k}q^{\binom{k}{2}}\sum_{s=m+k}^{n-(m-k)}%
\genfrac{[}{]}{0pt}{}{n-2m}{n-s-(m-k)}%
_{q}r_{1}^{n-s}r_{2}^{s}\frac{\left(  r_{1}^{2}\right)  _{s}}{\left(
r_{1}^{2}r_{2}^{2}\right)  _{s}}\left(  r_{1}r_{2}\right)  _{s-k}\\
=\sum_{u=0}^{\left\lfloor n/2\right\rfloor }\frac{\left[  n\right]
_{q}!(1-r_{1}r_{2}q^{n-2u})}{\left[  u\right]  _{q}!\left[  n-2u\right]
_{q}!\left(  r_{1}r_{2}\right)  _{n-u+1}}R_{n-2u}(y|r_{1}r_{2},q)\times\\
\sum_{m=0}^{u}%
\genfrac{[}{]}{0pt}{}{u}{m}%
_{q}(r_{1}r_{2})^{u-m}\left(  r_{1}r_{2}q^{n-u-m+1}\right)  _{m}\sum_{k=0}^{m}%
\genfrac{[}{]}{0pt}{}{m}{k}%
_{q}(-r_{1}r_{2})^{k}q^{\binom{k}{2}}\\
\times\sum_{t=0}^{n-2m}%
\genfrac{[}{]}{0pt}{}{n-2m}{t}%
_{q}r_{1}^{n-t-m-k}r_{2}^{t+m+k}\frac{\left(  r_{1}^{2}\right)  _{t+m+k}%
}{\left(  r_{1}^{2}r_{2}^{2}\right)  _{t+m+k}}\left(  r_{1}r_{2}\right)
_{t+m}.
\end{gather*}
Further, we get%
\begin{gather*}
D_{n}(y|r_{1},r_{2},q)=\sum_{u=0}^{\left\lfloor n/2\right\rfloor }%
\frac{\left[  n\right]  _{q}!(1-r_{1}r_{2}q^{n-2u})}{\left[  u\right]
_{q}!\left[  n-2u\right]  _{q}!\left(  r_{1}r_{2}\right)  _{n-u+1}}%
R_{n-2u}(y|r_{1}r_{2},q)\times\\
\sum_{m=0}^{u}%
\genfrac{[}{]}{0pt}{}{u}{m}%
_{q}(r_{1}r_{2})^{u-m}\left(  r_{1}r_{2}q^{n-u-m+1}\right)  _{m}\times\\
\sum_{k=0}^{m}%
\genfrac{[}{]}{0pt}{}{m}{k}%
_{q}(-r_{1}^{2})^{k}q^{\binom{k}{2}}\sum_{t=0}^{n-2m}%
\genfrac{[}{]}{0pt}{}{n-2m}{t}%
_{q}r_{1}^{n-t-2m}r_{2}^{t}\frac{\left(  r_{1}^{2}\right)  _{t+m+k}}{\left(
r_{1}^{2}r_{2}^{2}\right)  _{t+m+k}}\left(  r_{1}r_{2}\right)  _{t+m}.
\end{gather*}

Now, we change the order of summation in the last two sums, and we get%

\begin{gather*}
D_{n}(y|r_{1},r_{2},q)=\sum_{u=0}^{\left\lfloor n/2\right\rfloor }%
\frac{\left[  n\right]  _{q}!(1-r_{1}r_{2}q^{n-2u})}{\left[  u\right]
_{q}!\left[  n-2u\right]  _{q}!\left(  r_{1}r_{2}\right)  _{n-u+1}}%
R_{n-2u}(y|r_{1}r_{2},q)\times\\
\sum_{m=0}^{u}%
\genfrac{[}{]}{0pt}{}{u}{m}%
_{q}(r_{1}r_{2})^{u-m}\left(  r_{1}r_{2}q^{n-u-m+1}\right)  _{m}\times\\
\sum_{t=0}^{n-2m}%
\genfrac{[}{]}{0pt}{}{n-2m}{n-2m-t}%
_{q}r_{1}^{n-t-2m}r_{2}^{t}\left(  r_{1}r_{2}\right)  _{t+m}\sum_{k=0}^{m}%
\genfrac{[}{]}{0pt}{}{m}{k}%
_{q}(-r_{1}^{2})^{k}q^{\binom{k}{2}}\frac{\left(  r_{1}^{2}\right)  _{t+m+k}%
}{\left(  r_{1}^{2}r_{2}^{2}\right)  _{t+m+k}}.
\end{gather*}
Now, notice, that after applying Lemma \ref{skrt} we get%
\begin{gather*}
D_{n}(y|r_{1},r_{2},q)=\sum_{u=0}^{\left\lfloor n/2\right\rfloor }%
\frac{\left[  n\right]  _{q}!(1-r_{1}r_{2}q^{n-2u})(r_{1}r_{2})^{u}}{\left[
u\right]  _{q}!\left[  n-2u\right]  _{q}!\left(  r_{1}r_{2}\right)  _{n-u+1}%
}R_{n-2u}(y|r_{1}r_{2},q)\times\\
\sum_{m=0}^{u}%
\genfrac{[}{]}{0pt}{}{u}{m}%
_{q}\left(  r_{2}^{2}\right)  _{m}\left(  r_{1}r_{2}q^{n-u-m+1}\right)  _{m}\\
\times\sum_{t=0}^{n-2m}%
\genfrac{[}{]}{0pt}{}{n-2m}{n-2m-t}%
_{q}r_{1}^{n-t-2m}r_{2}^{t}\left(  r_{1}r_{2}\right)  _{t+m}\frac{\left(
r_{1}^{2}\right)  _{t+m}}{\left(  r_{1}^{2}r_{2}^{2}\right)  _{t+2m}}.
\end{gather*}
Notice that
\begin{gather*}
\sum_{t=0}^{n-2m}%
\genfrac{[}{]}{0pt}{}{n-2m}{n-2m-t}%
_{q}r_{1}^{n-t-2m}r_{2}^{t}\left(  r_{1}r_{2}\right)  _{t+m}\frac{\left(
r_{1}^{2}\right)  _{t+m}}{\left(  r_{1}^{2}r_{2}^{2}\right)  _{t+2m}%
}\allowbreak=\\
\allowbreak\sum_{s=0}^{n-2m}%
\genfrac{[}{]}{0pt}{}{n-2m}{s}%
_{q}r_{1}^{s}r_{2}^{n-2m-s}\left(  r_{1}r_{2}\right)  _{n-m-s}\frac{\left(
r_{1}^{2}\right)  _{n-m-s}}{\left(  r_{1}^{2}r_{2}^{2}\right)  _{n-s}}=\\
\frac{\left(  r_{1}^{2}\right)  _{m}\left(  r_{1}r_{2}\right)  _{m}}{\left(
r_{1}^{2}r_{2}^{2}\right)  _{2m}}\sum_{s=0}^{n-2m}%
\genfrac{[}{]}{0pt}{}{n-2m}{s}%
_{q}r_{1}^{s}r_{2}^{n-2m-s}\frac{\left(  r_{1}r_{2}q^{m}\right)
_{n-2m-s}\left(  r_{1}^{2}q^{m}\right)  _{n-m-s}}{\left(  r_{1}^{2}r_{2}%
^{2}q^{2m}\right)  _{n-s}}%
\end{gather*}
and further that%
\begin{gather*}
=\frac{\left(  r_{1}^{2}\right)  _{m}\left(  r_{1}r_{2}\right)  _{m}}{\left(
r_{1}^{2}r_{2}^{2}\right)  _{2m}}q^{-m(n-2m)/2}\times\\
\sum_{s=0}^{n-2m}%
\genfrac{[}{]}{0pt}{}{n-2m}{s}%
_{q}(q^{m/2}r_{1})^{s}(q^{m/2}r_{2})^{n-2m-s}\frac{\left(  r_{1}r_{2}%
q^{m}\right)  _{n-2m-s}\left(  r_{1}^{2}q^{m}\right)  _{n-m-s}}{\left(
r_{1}^{2}r_{2}^{2}q^{2m}\right)  _{n-s}}\\
=\frac{\left(  r_{1}^{2}\right)  _{m}\left(  r_{1}r_{2}\right)  _{m}}{\left(
r_{1}^{2}r_{2}^{2}\right)  _{2m}}\frac{1}{\left(  r_{1}^{2}r_{2}^{2}%
q^{2m}\right)  _{n-2n}}\sum_{s=0}^{n-2m}%
\genfrac{[}{]}{0pt}{}{n-2m}{s}%
_{q}r_{1}^{s}r_{2}^{n-s}\left(  r_{2}^{2}q^{m}\right)  _{s}\left(  r_{1}%
^{2}q^{m}\right)  _{n-s}\\
=\frac{\left(  r_{1}^{2}\right)  _{m}\left(  r_{1}r_{2}\right)  _{m}}{\left(
r_{1}^{2}r_{2}^{2}\right)  _{n}}w_{n-2m}(m,r_{1},r_{2},q).
\end{gather*}

At the final stage, we used Lemma \ref{spec1} and the identity $\left(
a\right)  _{n+m}\allowbreak=\allowbreak\left(  a\right)  _{n}\left(
aq^{n}\right)  _{m}$ multiple times.

Concluding, we have%
\begin{gather*}
D_{n}(y|r_{1},r_{2},q)=\frac{1}{\left(  r_{1}^{2}r_{2}^{2}\right)  _{n}}%
\sum_{u=0}^{\left\lfloor n/2\right\rfloor }\frac{\left[  n\right]
_{q}!(1-r_{1}r_{2}q^{n-2u})(r_{1}r_{2})^{u}}{\left[  u\right]  _{q}!\left[
n-2u\right]  _{q}!\left(  r_{1}r_{2}\right)  _{n-u+1}}R_{n-2u}(y|r_{1}%
r_{2},q)\times\\
\sum_{m=0}^{u}%
\genfrac{[}{]}{0pt}{}{u}{m}%
_{q}\left(  r_{2}^{2}\right)  _{m}\left(  r_{1}r_{2}q^{n-u-m+1}\right)
_{m}\left(  r_{1}^{2}\right)  _{m}\left(  r_{1}r_{2}\right)  _{m}%
w_{n-2m}(m,r_{1}r_{2},q).
\end{gather*}
Or after applying the definition of $\gamma_{n,k}(r_{1},r_{2},q)$ given by
(\ref{fin})%
\[
D_{n}(y|r_{1},r_{2},q)=\sum_{u=0}^{\left\lfloor n/2\right\rfloor }%
\frac{\left[  n\right]  _{q}!(1-r_{1}r_{2}q^{n-2u})(r_{1}r_{2})^{u}}{\left[
u\right]  _{q}!\left[  n-2u\right]  _{q}!\left(  r_{1}r_{2}\right)  _{n-u+1}%
}R_{n-2u}(y|r_{1}r_{2},q)\gamma_{n,u}(r_{1},r_{2},q).
\]

\end{proof}

\begin{proof}
[Proof of Lemma \ref{final}.]Let us denote for brevity $K(x,y)\allowbreak
=\allowbreak(1-r_{1}r_{2})f_{CN}\left(  y|x,r_{1},q\right)  \allowbreak
\times\allowbreak f_{CN}\left(  x|y,r_{2},q\right)  $. As shown in the Remark
\ref{sym}, function $K$ is a symmetric function of $x$ and $y$. Now imagine
that we multiply function $K$ by any function $g(x)$, and integrate the
product over $S(q)$ with respect to $x$. Let us call the result $h(y)$. Now
imagine that we multiply $K(x,y)$ by $g(y)$ and integrate with respect to $y$
over $S(q)$. We should get $h(x)$.

The proof will be by induction with respect to $s\allowbreak=$\allowbreak
$n-2u$. The induction assumption is the following:
\begin{align*}
\gamma_{n,k}(r_{1},r_{2},q)\allowbreak &  =\allowbreak\phi_{n-2k}(r_{1}%
,r_{2},q)\text{ and }\\
\int_{S(q)}K(x,y)R_{n}(x|r_{1}r_{2},q)dx  &  =\phi_{n}(r_{1},r_{2}%
,q)R_{n}(y|r_{1}r_{2},q).
\end{align*}

So, now to start induction, let us set $s\allowbreak=\allowbreak0$.
Integrating the right-hand side (\ref{exp1}) with respect to $x$ yields
$f_{R}(y|r_{1}r_{2},q)$. Now, integration of the right-hand side of
(\ref{exp1}) with respect to $y$ results in :%
\begin{gather*}
f_{N}(x|q)\sum_{n\geq0}\frac{H_{2n}(x|q)}{\left[  2n\right]  _{q}!}\int%
_{S(q)}D_{2n}(y|r_{1},r_{2},q)f_{R}(y|r_{1}r_{2},q)dy=\\
f_{N}(x|q)\sum_{n\geq0}\frac{H_{2n}(x|q)}{\left[  2n\right]  _{q}!}%
\frac{(1-r_{1}r_{2})(r_{1}r_{2})^{n}\left[  2n\right]  _{q}!}{\left[
n\right]  _{q}!(r_{1}r_{2})_{n+1}}\gamma_{2n,n}(r_{1},r_{2},q)\\
=(1-r_{1}r_{2})f_{N}(x|q)\sum_{n\geq0}\frac{H_{2n}(x|q)}{\left[  n\right]
_{q}!}\frac{(r_{1}r_{2})^{n}}{(r_{1}r_{2})_{n+1}}\gamma_{2n,n}(r_{1},r_{2},q).
\end{gather*}
Hence, using Proposition \ref{aux1} and the uniqueness of the expansion in
orthogonal polynomials, we must have
\[
\gamma_{2n,n}(r_{1},r_{2},q)=\phi_{0}(r_{1},r_{2},1)=1,
\]
for all $n\geq0$.

So now let us take $s\allowbreak=\allowbreak m$ and make the induction
assumption that $\gamma_{n,k}\left(  r_{1},r_{2},q\right)  \allowbreak
=\allowbreak$ $\phi_{n-2k}(r_{1},r_{2},q)$ whenever $n-2k<m$.

Now let us multiply the left-hand side of (\ref{exp1}) by $R_{m}(x|r_{1}%
r_{2},q)$ and integrate over $S\left(  q\right)  $ with respect to $x$. Since
(\ref{HRfN}) is zero for $n>m$, we have%
\begin{gather*}
f_{R}(y|r_{1}r_{2},q)\sum_{s=0}^{\left\lfloor m/2\right\rfloor }\frac
{D_{m-2s}(y|r_{1},r_{2},q)}{\left[  m-2s\right]  _{q}!}\int_{S(q)}%
H_{m-2s}(x|q)R_{m}(x|r_{1}r_{2},q)f_{N}(x|q)dx=\\
f_{R}(y|r_{1}r_{2},q)\sum_{s=0}^{\left\lfloor m/2\right\rfloor }(-r_{1}%
r_{2})^{s}q^{\binom{s}{2}}\frac{\left[  m\right]  _{q}!\left(  r_{1}%
r_{2}\right)  _{m-s}}{\left[  s\right]  _{q}!}\frac{1}{\left[  m-2s\right]
_{q}!}\times\\
\sum_{u=0}^{\left\lfloor m/2\right\rfloor -s}\frac{\left[  m-2s\right]
_{q}!(1-r_{1}r_{2}q^{m-2s-2u})(r_{1}r_{2})^{u}}{\left[  u\right]  _{q}!\left[
m-2s-2u\right]  _{q}!(r_{1}r_{2})_{m-2s-u+1}}R_{m-2s-2u}(r_{1},r_{2}%
,q)\gamma_{m-2s,u}(r_{1},r_{2},q)=\\
f_{R}(y|r_{1}r_{2},q)\sum_{k=0}^{\left\lfloor m/2\right\rfloor }\frac{\left[
m\right]  _{q}!(1-r_{1}r_{2}q^{m-2k})(r_{1}r_{2})^{k}}{\left[  k\right]
_{q}!\left[  m-2k\right]  _{q}!}R_{m-2k}(r_{1},r_{2},q)\gamma_{m-2k,0}%
(r_{1},r_{2},q)\\
\times\sum_{s=0}^{k}%
\genfrac{[}{]}{0pt}{}{k}{s}%
_{q}(-1)^{s}q^{\binom{s}{2}}\frac{\left(  r_{1}r_{2}\right)  _{m-s}}%
{(r_{1}r_{2})_{m-k-s+1}}=
\end{gather*}%
\begin{gather*}
f_{R}(y|r_{1}r_{2},q)\sum_{k=0}^{\left\lfloor m/2\right\rfloor }\frac{\left[
m\right]  _{q}!(1-r_{1}r_{2}q^{m-2k})(r_{1}r_{2})^{k}}{\left[  k\right]
_{q}!\left[  m-2k\right]  _{q}!}R_{m-2k}(r_{1},r_{2},q)\gamma_{m-2k,0}%
(r_{1},r_{2},q)\\
\times\sum_{u=0}^{k}%
\genfrac{[}{]}{0pt}{}{k}{u}%
_{q}(-1)^{k-u}q^{\binom{k-u}{2}}\frac{\left(  r_{1}r_{2}\right)  _{m-k+u}%
}{(r_{1}r_{2})_{m-2k+u+1}}=\\
f_{R}(y|r_{1}r_{2},q)\sum_{k=0}^{\left\lfloor m/2\right\rfloor }\frac{\left[
m\right]  _{q}!(1-r_{1}r_{2}q^{m-2k})(r_{1}r_{2})^{k}}{\left[  k\right]
_{q}!\left[  m-2k\right]  _{q}!}R_{m-2k}(r_{1},r_{2},q)\gamma_{m-2k,0}%
(r_{1},r_{2},q)\\
\times\sum_{u=0}^{k}%
\genfrac{[}{]}{0pt}{}{k}{u}%
_{q}(-1)^{k-u}q^{\binom{k-u}{2}}\frac{\left(  r_{1}r_{2}q^{m-2k+u}\right)
_{k}}{(1-r_{1}r_{2}q^{m-2k+u+1})}=\\
f_{R}(y|r_{1}r_{2},q)R_{m}(r_{1},r_{2},q)\phi_{m}(r_{1},r_{2},q).
\end{gather*}

We applied here induction assumption as well as Lemma \ref{pom1} with
$a\allowbreak=\allowbreak r_{1}r_{2}q^{m-2k}$.

Now, let us multiply the (\ref{exp1}) by $R_{m}(y|r_{1}r_{2},q)$ and integrate
with respect to $y$ over $S(q)$, We get then
\begin{gather*}
f_{N}(x|q)\sum_{n\geq0}\frac{1}{\left[  n\right]  _{q}!}H_{n}(x|q)\int%
_{S(q)}D_{n}(y|r_{1},r_{2},q)R_{m}f_{R}(y|r_{1}r_{2},q)dy=\\
f_{N}(x|q)\sum_{u\geq0}\frac{H_{2u+m}(x|q)\left[  2u+m\right]  _{q}%
!(1-r_{1}r_{2}q^{m})\left[  m\right]  _{q}!(r_{1}^{2}r_{2}^{2})_{m}%
(1-r_{1}r_{2})}{\left[  2u+m\right]  _{q}!\left[  u\right]  _{q}!\left[
m\right]  _{q}!\left(  r_{1}r_{2}\right)  _{m+u+1}(1-r_{1}r_{2}q^{m})}=\\
(1-r_{1}r_{2})f_{N}(x|q)(r_{1}^{2}r_{2}^{2})_{m}\sum_{u\geq0}\frac
{H_{2u+m}(x|q)}{\left[  u\right]  _{q}!\left(  r_{1}r_{2}\right)  _{m+u+1}%
}\gamma_{2u+m,u}(r_{1},r_{2},q).
\end{gather*}
Now, in order to have this expression to be equal to $R_{m}(x|r_{1}%
r_{2},q)f_{R}(x|r_{1}r_{2},q)$, in the face of Proposition1(3) of
\cite{Szab19}, we must have
\[
\gamma_{2u+m,u}(r_{1},r_{2},q)=\phi_{m}(r_{1},r_{2},q).
\]

\end{proof}

\end{document}